\theoremstyle{plain}
\newtheorem{theorem}{Theorem}[section]
\newtheorem{lemma}[theorem]{Lemma}
\newtheorem{proposition}[theorem]{Proposition}
\newtheorem{corollary}[theorem]{Corollary}
\newtheorem{thmx}{Theorem}
\newtheorem{corx}[thmx]{Corollary}
\theoremstyle{definition}
\newtheorem{definition}[theorem]{Definition}
\newtheorem{example}[theorem]{Example}
\newtheorem{remark}[theorem]{Remark}
\newtheorem{notation}[theorem]{Notation}
\theoremstyle{remark}
\numberwithin{equation}{section}
\newcounter{TmpEnumi}
\newcommand{\N}{\mathbb{N}}
\newcommand{\Ped}{{\operatorname{Ped}}}
\newcommand{\af}{\alpha}
\newcommand{\bt}{\beta}
\newcommand{\dt}{\delta}
\newcommand{\ep}{\varepsilon}
\newcommand{\et}{\eta}
\newcommand{\ld}{\lambda}
\newcommand{\sm}{\sigma}
\newcommand{\ph}{\varphi}
\newcommand{\ps}{\psi}
\newcommand{\rh}{\rho}
\newcommand{\ta}{\tau}
\newcommand{\card}{{\operatorname{card}}}
\newcommand{\diag}{{\operatorname{diag}}}
\newcommand{\Aut}{{\operatorname{Aut}}}
\newcommand{\andeqn}{\qquad {\mbox{and}} \qquad}
\begin{document}

\title[Actions on  tracially $\mathcal{Z}$-absorbing C*-algebras]{Group actions on simple tracially $\mathcal{Z}$-absorbing C*-algebras}



\author[M. Amini, N. Golestani,  S. Jamali, N. C. Phillips]{
  Massoud Amini, Nasser Golestani, Saeid Jamali,
  N. Christopher Phillips}


\address{{\textbf{M. Amini, N. Golestani, S. Jamali}} \newline
           Faculty of Mathematical Sciences, Tarbiat Modares University
\\
           Tehran 14115-134
\newline
              {\textbf{N. C. Phillips}} \newline
   Department of Mathematics, University of Oregon
\\
   Eugene OR 97403-1222}
\thanks{This research of the second author was
 in part supported by a grant from IPM (No.1401460118)}



\begin{abstract}
We show that if $A$ is a simple
(not necessarily unital) tracially $\mathcal{Z}$-absorbing C*-algebra
and  $\alpha \colon G \to \mathrm{Aut} (A)$ is an
action of a finite group $G$ on $A$  with the weak tracial Rokhlin
property, then
the crossed product $C^*(G, A,\alpha)$ and the fixed point
algebra $A^\alpha$ are simple and tracially $\mathcal{Z}$-absorbing,
and they are $\mathcal{Z}$-stable if, in addition, $A$ is 
separable and nuclear. The same conclusion
holds for all intermediate C*-algebras of the inclusions
$A^\alpha \subseteq A$ and $A \subseteq C^*(G, A,\alpha)$.
We  prove that if $A$
is a simple tracially $\mathcal{Z}$-absorbing C*-algebra,
then, under a finiteness condition,
the permutation action of the symmetric group $S_m$
on the minimal $m$-fold tensor product of $A$
has the weak tracial Rokhlin property.
We define the weak tracial Rokhlin property for automorphisms
of simple  C*-algebras
and we show that---under
a mild assumption---(tracial) $\mathcal{Z}$-absorption is
preserved under crossed products by such automorphisms.
\end{abstract}

\maketitle
\tableofcontents
\section{Introduction and main results}\label{sec_intro}
%



The study of group actions on C*-algebras  and 
the corresponding crossed products is at the core of 
 the modern theory of operator algebras.
With the near completion of the Elliott's program
to classify simple separable unital nuclear $\mathcal{Z}$-stable
C*-algebras   (see W.~Winter's ICM talk in 2018 
\cite{Winter2018} for details),
the classification of nonunital C*-algebras is now of great
interest \cite{GngLin, GngLinII, GngLinIII, EGLN17}. Among equivalent conditions which characterize classifiability, the
notion of $\mathcal{Z}$-stability has a central role in the classification program.
The  property of tracial $\mathcal{Z}$-absorption, studied by Hirshberg and
Orovitz for unital C*-algebras in \cite{HO13}, and by the authors 
for nonunital C*-algebras in \cite{AGJP21}, is a 
local version of $\mathcal{Z}$-stability, and is
equivalent to that for simple separable nuclear
C*-algebras \cite{HO13, CLS2021}.

\medskip

In this paper we investigate finite group actions on
simple (not necessarily unital) tracially 
$\mathcal{Z}$-absorbing C*-algebras.
In particular, we deal with the $\mathcal{Z}$-stability of 
the resulting crossed products. For actions on 
simple \emph{unital} C*-algebras, there are already nice results
in the literature
(however, even in the unital case, some of our results 
are not covered by the existing known results).
For instance, Matui and Sato 
showed that if 
$\alpha \colon \Gamma \to \mathrm{Aut} (A)$
is a strongly outer action of an elementary amenable group $\Gamma$
on a  simple unital separable  nuclear stably finite
infinite dimensional C*-algebra $A$ with finitely many
extremal tracial states, then the $\mathcal{Z}$-stability
of $A$ implies that of the crossed product
$C^* (\Gamma, A,\alpha)$ (see \cite[Corollary~4.11]{MS2014}).
In \cite[Theorem~5.6]{HO13}, Hirshberg and Orovitz
showed that if  $\alpha \colon G \to \mathrm{Aut} (A)$ is an
action of a finite group $G$ on a simple unital separable
tracially $\mathcal{Z}$-absorbing
C*-algebra $A$, then
the crossed product $C^*(G, A,\alpha)$ is tracially
$\mathcal{Z}$-absorbing, whenever 
$\alpha$ has  the generalized tracial Rokhlin
property. 

\medskip

There are a few results about the $\mathcal{Z}$-stability
of the crossed products of actions on \emph{nonunital}
C*-algebras, but the assumptions on the actions
are rather strong. For instance, if
 $\alpha \colon G \to \mathrm{Aut} (A)$ is an action 
of a finite group $G$ on a separable  C*-algebra $A$,
then the $\mathcal{Z}$-stability
of $A$ implies that of the crossed product, 
provided that $\alpha$ has the Rokhlin property 
\cite[Theorem~3(v)]{San15}. More generally, 
the same conclusion holds even if $\alpha$
has finite Rokhlin dimension with commuting towers 
\cite[Theorem~2.2]{HrsPh1}.
However, the Rokhlin property is rather strong  and imposes
certain restrictions on K-theory of the C*-algebra.
For instance, if $A$ is simple and separable
such that  either $K_0(A)$
or $K_1(A)$ is isomorphic to $\mathbb{Z}$, then there 
is no nontrivial finite group actions
on $A$ with the Rokhlin property 
(see \cite[Corollary~3.10]{Naw2016}). Likewise,
there is no action of a nontrivial finite group  on 
$\mathcal{Z}$ or $\mathcal{O}_\infty$ which has finite 
Rokhlin dimension with commuting towers \cite[Corollary~4.8]{HrsPh1}
(cf.~Theorem~\ref{thmxB} and Corollary~\ref{cor_per_any} below).
For this reason, one would prefer 
instead to use the weak tracial Rokhlin property \cite{FG17}.

In the first main result we deal with 
 the preservation of (tracial) $\mathcal{Z}$-absorption 
for finite groups actions on nonunital simple C*-algebras.  

\begin{thmx}\label{thmxA}
Let $A$ be a simple tracially $\mathcal{Z}$-absorbing C*-algebra
and let $\alpha \colon G \to \mathrm{Aut} (A)$ be an
action of a finite group $G$ on $A$  with the weak tracial Rokhlin
property. Then
\begin{enumerate}
\item\label{thmxA_it1}
the crossed product $C^*(G, A,\alpha)$ and the fixed point
algebra $A^\alpha$ are simple and tracially $\mathcal{Z}$-absorbing;
\item\label{thmxA_it2}
if $A$ is $\sigma$-unital 
then all 
intermediate 
C*-algebras $B$ and $D$ with 
\[
A^{\alpha}  \subseteq B \subseteq A \subseteq D \subseteq C^*(G, A,\alpha)
\]
are   simple and
tracially $\mathcal{Z}$-absorbing;

\item\label{thmxA_it3}
if $A$ is separable and nuclear then all intermediate
C*-algebras $B$ and $D$ as above are nuclear and $\mathcal{Z}$-stable.
\end{enumerate}
\end{thmx}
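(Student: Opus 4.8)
The plan is to treat the three parts in sequence, with the tracial $\mathcal{Z}$-absorption of the crossed product $C^*(G, A, \alpha)$ serving as the engine that drives everything else. First I would dispose of simplicity. The weak tracial Rokhlin property forces $\alpha_g$ to be outer for every $g \neq 1$, so the standard outerness criterion for finite group actions on simple C*-algebras gives that $C^*(G, A, \alpha)$ is simple. For the fixed point algebra, set $p = \frac{1}{|G|} \sum_{g \in G} u_g$, a projection in the multiplier algebra $M(C^*(G, A, \alpha))$. A direct computation using $u_g a = \alpha_g(a) u_g$ gives $p a p = E(a)\, p$ for $a \in A$, where $E(a) = \frac{1}{|G|} \sum_{g \in G} \alpha_g(a)$, so that $A^\alpha \cong p\, C^*(G, A, \alpha)\, p$. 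As a nonzero corner of a simple C*-algebra this is a full hereditary subalgebra, hence simple and Morita equivalent to $C^*(G, A, \alpha)$.

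The technical heart is to verify the definition of tracial $\mathcal{Z}$-absorption from \cite{AGJP21} directly for $C^*(G, A, \alpha)$. Given a finite set $F$, a tolerance $\varepsilon > 0$, a nonzero positive element, and the relevant building block, I would first approximate the elements of $F$ by finite sums $\sum_{g \in G} a_g u_g$ with $a_g \in A$, reducing to a finite subset of $A$ together with the unitaries $u_g$. Tracial $\mathcal{Z}$-absorption of $A$ supplies an order-zero map from the building block into $A$ that almost commutes with this subset and has Cuntz-small complement, while the weak tracial Rokhlin property supplies a system of positive Rokhlin elements $(f_g)_{g \in G}$ that are approximately orthogonal, approximately permuted by $\alpha$, almost commute with $F$, and have tracially small complement. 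I would use the $f_g$ to symmetrize the order-zero map over $G$, producing an order-zero map into the crossed product whose image almost commutes with $A$ and with the $u_g$ simultaneously and whose complement is still tracially small. This follows the unital template of Hirshberg and Orovitz \cite[Theorem~5.6]{HO13}, but in the nonunital setting the Rokhlin projections must be replaced by positive contractions and the complement tracked through approximate units, which is where the argument is genuinely delicate. Tracial $\mathcal{Z}$-absorption of $A^\alpha$ then follows from the Morita equivalence above and the permanence of tracial $\mathcal{Z}$-absorption under full corners of simple C*-algebras.

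For the intermediate algebras I would work inside the tower $A^\alpha \subseteq A \subseteq C^*(G, A, \alpha)$, whose outer inclusions carry conditional expectations of finite Watatani index. Simplicity of any intermediate $B$ or $D$ is inherited from the endpoints through this finite-index structure. For tracial $\mathcal{Z}$-absorption I would transfer order-zero maps along the inclusions; the key observation is that Cuntz domination of a complement passes automatically to any larger algebra, so the only real issue is to realize the approximating order-zero maps inside the intermediate algebra while preserving almost-commutation with its prescribed finite set. Here the conditional expectations and the $\sigma$-unitality hypothesis enter, the latter providing a strictly positive element against which tracial smallness is measured; I expect this realization step to be the second most delicate point of the proof.

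Finally, part~(3) combines part~(2) with nuclearity. The crossed product of a separable nuclear C*-algebra by a finite group is again separable and nuclear, and nuclearity transfers to the finite-index intermediate subalgebras. Since these algebras are then simple, separable, nuclear, and tracially $\mathcal{Z}$-absorbing, the equivalence between tracial $\mathcal{Z}$-absorption and $\mathcal{Z}$-stability for simple separable nuclear C*-algebras \cite{HO13, CLS2021} shows that every intermediate algebra is $\mathcal{Z}$-stable. The main obstacle throughout remains the crossed-product computation: constructing the order-zero map into the nonunital crossed product and controlling its complement in the Cuntz semigroup, where the absence of a unit together with the simultaneous bookkeeping of the group unitaries and the tracial smallness makes the estimates intricate.
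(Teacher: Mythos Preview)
Your treatment of part~(1) matches the paper's closely: outerness from the weak tracial Rokhlin property gives simplicity of the crossed product via Kishimoto, the fixed point algebra is a full hereditary subalgebra (Rosenberg), and the core construction---symmetrizing an order-zero map $\varphi\colon M_n\to A$ against the Rokhlin contractions to obtain an approximately $G$-invariant order-zero map, which then nearly commutes with the canonical unitaries $u_g$---is exactly what the paper does in its Lemma~\ref{lem_fg} and Theorem~\ref{thm_fg}. Part~(3) likewise agrees with the paper once part~(2) is in hand.

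For part~(2), however, you diverge from the paper in a way that leaves a real gap. The paper does not attempt to push order-zero maps through a general finite-index inclusion. Instead it invokes Izumi's Galois correspondence \cite[Corollary~6.6]{Izumi2002}: because $\alpha$ is pointwise outer and $A$ is $\sigma$-unital, every intermediate $D$ with $A\subseteq D\subseteq C^*(G,A,\alpha)$ is already $C^*(H,A,\alpha|_H)$ for some subgroup $H\leq G$, and every intermediate $B$ with $A^\alpha\subseteq B\subseteq A$ is $A^{\alpha|_H}$. Since $\alpha|_H$ again has the weak tracial Rokhlin property \cite[Proposition~4.1]{FG17}, part~(2) reduces immediately to part~(1). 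Your proposed route---transferring order-zero maps along the conditional expectations of the outer inclusions---is problematic because there is no evident conditional expectation from $A$ (or from the crossed product) onto an \emph{arbitrary} intermediate $B$ or $D$; the expectations you cite land only on $A^\alpha$ and on $A$. Without first knowing that the intermediate algebra has the special form $A^{\alpha|_H}$ or $C^*(H,A,\alpha|_H)$, there is no mechanism to realize the approximating order-zero map inside it, and the ``second most delicate point'' you flag is in fact where the argument would stall. The $\sigma$-unitality hypothesis in the statement is there precisely because Izumi's result requires it, not to supply a strictly positive element for Cuntz-smallness bookkeeping.
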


Part~\eqref{thmxA_it1} extends \cite[Theorem~5.6]{HO13} to
the nonunital case in which  the fixed point algebra is not considered.
 (Note that, the assumption of separability
is implicit in the statement of \cite[Theorem~5.6]{HO13}, since the 
authors use their Lemma~5.5 in the proof.) 
To prove Part~\eqref{thmxA_it1}, we use ideas from \cite{HO13} 
and the machinery of
 Cuntz subequivalence in our nonunital setting
instead of using the tracial state space (see Theorem~\ref{thm_fg}).

  Part~\eqref{thmxA_it2} provides a way to identify more tracially 
  $\mathcal{Z}$-absorbing C*-algebras, by looking at intermediate C*-algebras.
  Recently, the intermediate C*-algebras
  (and their classification) have attracted some interest \cite{ER2021, AK2020}.
  We use Izumi's result on  the Galois correspondence for
  intermediate C*-algebras in the setting of finite group actions
  \cite[Corollary~6.6]{Izumi2002} 
  (see Corollary~\ref{Cor_intermediate} below). 
  Part~\eqref{thmxA_it3} follows from Part~\eqref{thmxA_it2}
  and the equivalence of $\mathcal{Z}$-stability and tracial $\mathcal{Z}$-absorption
 for simple separable nuclear C*-algebras \cite{HO13, CLS2021}.
 See Corollary~\ref{cor_fg_zs} for the statement and proof of this part.
 
 \medskip
 
 In \cite{HO13}, it is shown that the permutation action (see Section~\ref{Sec_Perm})
 of the symmetric group $S_m$ on  
 $\mathcal{Z}^{\otimes m}\cong \mathcal{Z}$
 has the generalized tracial Rokhlin property
 (and hence the weak tracial Rokhlin property, by Proposition~\ref{P_8615_WeakToGen}).
 Our next result is about the weak tracial 
 Rokhlin property of the permutation action
 of the symmetric group $S_m$ on the
  minimal tensor product of $m$ copies of
  a simple tracially $\mathcal{Z}$-absorbing C*-algebra.
  The combination of this result and Theorem~\ref{thmxA} 
  could be used to give new examples $\mathcal{Z}$-stable C*-algebras.
  
\begin{thmx}\label{thmxB}
Let $A$
be a simple tracially $\mathcal{Z}$-absorbing C*-algebra and 
$A^{\otimes m}$ be finite for some $m \in \N$.
Then the permutation action $\bt \colon S_m \to \Aut (A^{\otimes m})$
has the weak tracial Rokhlin property.
\end{thmx}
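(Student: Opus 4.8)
The plan is to verify the defining clauses of the weak tracial Rokhlin property for $\bt$ directly, producing the required family of orthogonal positive contractions as the image, under an $m$-fold tensor power of a \emph{single} order-zero map, of an explicit permutation-equivariant family of projections in a matrix algebra. Fix $\ep>0$, a finite set $F\subseteq A^{\otimes m}$, and a nonzero positive element $x\in(A^{\otimes m})_+$. First I would fix an integer $k\ge m$ (to be taken large, depending only on $m$ and the tracial size of $x$) and work inside $M_k^{\otimes m}=M_{k^m}$. Letting $p_1,\dots,p_k$ be the rank-one spectral projections of a fixed basis of $\C^k$, for each $\sm\in S_m$ set
\[
q_\sm=\sum_{\substack{(j_1,\dots,j_m)\ \mathrm{distinct}\\ \mathrm{sorting\ perm}\,=\,\sm}} p_{j_1}\otimes\cdots\otimes p_{j_m}.
\]
These are mutually orthogonal projections; the permutation action $\gm$ of $S_m$ on $M_k^{\otimes m}$ (permuting tensor factors) acts freely on distinct-index basis tuples, so $\gm_\rh(q_\sm)=q_{\rh\sm}$, and $Q:=\sum_\sm q_\sm$ is the projection onto the span of all distinct-index basis vectors, whence $\operatorname{tr}(1-Q)\le \binom{m}{2}/k$.

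Next I would invoke tracial $\mathcal{Z}$-absorption of $A$, in the nonunital formulation of \cite{AGJP21}, to obtain, for a tolerance $\ep'>0$ and a finite set $F_0\subseteq A$ to be specified, a c.p.c.\ order-zero map $\ph\colon M_k\to A$ that is $\ep'$-central for $F_0$ and whose defect $1-\ph(1)$ is Cuntz subequivalent to a preassigned positive element of arbitrarily small tracial size. Put $f_\sm=\ph^{\otimes m}(q_\sm)$. Since the tensor power of c.p.c.\ order-zero maps is again c.p.c.\ order zero (immediate from the structure theorem $\ph(\cdot)=\pi(\cdot)\ph(1)$), the $f_\sm$ are mutually orthogonal positive contractions; and the intertwining $\bt_\rh\circ\ph^{\otimes m}=\ph^{\otimes m}\circ\gm_\rh$ gives $\bt_\rh(f_\sm)=f_{\rh\sm}$ on the nose, so orthogonality and equivariance hold exactly.

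The key estimate---and the step I expect to be the main technical obstacle---is almost commutation with $F$ with a bound \emph{independent of $k$}. Approximating each element of $F$ by a finite sum of elementary tensors reduces matters to $b=a_1\otimes\cdots\otimes a_m$ and, after the slotwise splitting of the commutator, to bounding $\|[f_\sm,\,1^{\otimes(l-1)}\otimes a_l\otimes 1^{\otimes(m-l)}]\|$ for each $l$. Expanding $f_\sm=\sum \ph(p_{j_1})\otimes\cdots\otimes\ph(p_{j_m})$, the slot-$l$ commutator groups, for each fixed choice of the indices in the other slots, into $[\ph(P),a_l]$, where the allowed values of $j_l$ form a single contiguous block (the sorting permutation is constant on each gap), so $P=\sum_{j_l}p_{j_l}$ is a norm-one projection and $\|[\ph(P),a_l]\|\le\ep'$. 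Because the factors in the remaining slots are mutually orthogonal positive contractions as the other indices vary, the elementary estimate $\|\sum_r C_r\otimes d_r\|\le\max_r\|C_r\|$ (valid whenever the $d_r$ are orthogonal positive contractions) absorbs the many summands, yielding $\|[f_\sm,b]\|\le m\ep'$ with no dependence on $k$. Choosing $\ep'$ small \emph{after} fixing the elementary-tensor approximation of $F$ then gives $\|[f_\sm,a]\|<\ep$ for all $a\in F$.

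It remains to verify the defect clause, and this is where the finiteness hypothesis enters. In $\widetilde{A^{\otimes m}}$ the uncovered part is $(1-\ph(1)^{\otimes m})+\ph^{\otimes m}(1-Q)$: the first summand telescopes into $m$ pieces each dominated by $1-\ph(1)$, hence of arbitrarily small tracial size, while $d_\ta\bigl(\ph^{\otimes m}(1-Q)\bigr)\le\operatorname{tr}(1-Q)\le\binom{m}{2}/k$ for every quasitrace $\ta$ (after normalizing by the tracial weight of $\ph^{\otimes m}(1)\le 1$), which I make small by the initial choice of large $k$. Since $A^{\otimes m}$ is simple, finite, and tracially $\mathcal{Z}$-absorbing, it has strict comparison of positive elements, so forcing the total tracial size of the uncovered part below $d_\ta(x)$ yields its Cuntz subequivalence to $x$, which is precisely the defect condition of the weak tracial Rokhlin property \cite{FG17}. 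Assembling the four clauses---orthogonality, exact equivariance, $k$-uniform almost centrality, and the comparison-based defect bound---completes the proof, the delicate points being the $k$-independent commutator estimate and the nonunital comparison argument for the defect.
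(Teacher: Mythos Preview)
Your overall architecture matches the paper's: build the Rokhlin contractions as $f_\sm=\ph^{\otimes m}(q_\sm)$ for a single c.p.c.\ order-zero map $\ph\colon M_k\to A$ coming from tracial $\mathcal{Z}$-absorption, with $q_\sm$ the projection onto distinct-index tuples with sorting permutation~$\sm$. Your equivariance is exact, as in the paper, and your commutator estimate is actually sharper than the paper's: the paper bounds $\|[f_\sm,a]\|$ by $\card(R)\cdot m\cdot\rho$ and then compensates by taking $\rho\le\ep/n^m$, whereas your grouping by the off-slot indices and the block-diagonal bound $\|\sum_{J'}D_{J'}\otimes C_{J'}\|\le\max_{J'}\|C_{J'}\|$ (valid because the $D_{J'}$ are mutually orthogonal positive contractions) gives a $k$-independent bound directly. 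That step is correct and is a genuine simplification.

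The gap is in the defect clause, and it is precisely the nonunital difficulty the paper works hardest on. You never introduce the element $y$ from Definition~\ref{defwtrp}; the required condition is $\bigl(y^{2}-yfy-\ep\bigr)_+\precsim x$, not a statement about $1-f$ in $\widetilde{A^{\otimes m}}$. More seriously, your claim that $1-\ph(1)$ has ``arbitrarily small tracial size'' is not available when $A$ is nonunital: the definition of tracial $\mathcal{Z}$-absorption only gives $\bigl(x_0^{2}-x_0\ph(1)x_0-\ep\bigr)_+\precsim a$ for a prescribed $x_0\in A_+$, which controls the \emph{compressed} defect $x_0(1-\ph(1))x_0$, not $1-\ph(1)$ itself. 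For a simple nonunital $A$ whose lower semicontinuous traces are unbounded, $d_\ta(1-\ph(1))=\infty$ and your telescoping bound on $1-\ph(1)^{\otimes m}$ carries no information. The paper avoids traces entirely here: it first reduces to $y=y_0^{\otimes m}$ via an approximate-identity argument (Lemma~\ref{L_8809_UseAppId}), then handles $\bigl(y^2-y\ph(1)^{\otimes m}y-\tfrac{\ep}{2}\bigr)_+$ by a direct Cuntz-subequivalence lemma (Lemma~\ref{L_8808_Dom}) that propagates the one-variable defect bound to the $m$-fold tensor power, and finally handles the repeated-index piece $\sum_{r\in Q}y_0^{\otimes m}g_r y_0^{\otimes m}$ by a counting argument combined with almost unperforation of $W(A)$ (Lemma~\ref{L_8825_DFSub}), choosing $n=n_0 n_1$ so that $n^m-(n-m)^m<n_1^m$. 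All of this is pure Cuntz-semigroup manipulation, never invoking dimension functions.

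A secondary point: you misplace the role of finiteness. Strict comparison (in the quasitracial sense) follows from almost unperforation of $W(A^{\otimes m})$, which holds for any simple tracially $\mathcal{Z}$-absorbing algebra regardless of finiteness. Finiteness is used instead to dispense with condition~\eqref{WTRP-OUT-CO} of Definition~\ref{defwtrp} (the norm condition $\|fxf\|>1-\ep$), via Lemma~\ref{L_8814_TRP_Fin} (equivalently \cite[Proposition~7.12]{AGJP21}). You list only ``four clauses'' and omit this one; without finiteness it does not follow from the others, and your sketch gives no mechanism to verify it.
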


Here, finiteness of $A^{\otimes m}$ means that its unitization
is finite in the usual sense, which is the case,
for example, if $A^{\otimes m}$ is stably projectionless
or if $A$ is simple, exact, and stably finite
in the sense of \cite{Ro02}.

Though the idea of the proof is rather simple,
the proof is  long and needs several technical lemmas
(see  Section~\ref{Sec_Perm}).
Also, we use a recent result  which says
that for actions on finite C*-algebras, Condition~\eqref{WTRP-OUT-CO}
in the definition of the weak tracial Rokhlin property
(Definition~\ref{defwtrp}) follows from 
the other conditions \cite[Proposition~7.12]{AGJP21}.

 \medskip

As an application of Theorems~\ref{thmxB}, the permutation
action of $S_m$ on the Razak-Jacelon algebra 
$\mathcal{W}\cong \mathcal{W}^{\otimes m}$ has the 
weak tracial Rokhlin property, since $\mathcal{W}$ is simple, finite, and 
$\mathcal{Z}$-stable. Also, Theorem~\ref{thmxA} implies that
$C^*(S_m, \mathcal{W}, \beta)$ is $\mathcal{Z}$-stable (since 
$\mathcal{W}$  is nuclear). In \cite[Example~3.12]{FG17}, 
various examples of actions
with the weak tracial Rokhlin property are constructed 
using Theorem~\ref{thmxB}.
For instance, if $A$ is a simple nonelementary C*-algebra with
tracial rank zero
(hence  tracially $\mathcal{Z}$-absorbing, by \cite[Theorem~A]{AGJP21}), 
then $\bt \colon S_m \to \Aut (A^{\otimes m})$
has the weak tracial Rokhlin property and the resulting
crossed product is  tracially $\mathcal{Z}$-absorbing, and if, in addition,
$A$ is separable and nuclear, then the crossed product is   $\mathcal{Z}$-stable.

\medskip
Using a recent dichotomy for tracially approximately divisible C*-algebras \cite{FLL21}
and Theorems~\ref{thmxA} and \ref{thmxB} we obtain the following.
See Corollary~\ref{cor_per_tz}.

\begin{corx}\label{corxC}
Let $A$ be a simple separable exact tracially $\mathcal{Z}$-absorbing
C*-algebra. Let $m\in\mathbb{N}$ and consider
 the permutation action $\bt \colon S_m \to \Aut (A^{\otimes m})$.
 Then  all intermediate C*-algebras of the inclusions
$(A^{\otimes m})^\beta \subseteq A^{\otimes m}$ and 
$A^{\otimes m} \subseteq C^*(S_m, A^{\otimes m},\beta)$
are simple and tracially $\mathcal{Z}$-absorbing. 
If, in addition, $A$ is nuclear then 
all these intermediate C*-algebras 
 are   nuclear and $\mathcal{Z}$-stable.
\end{corx}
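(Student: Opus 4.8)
The plan is to reduce the corollary to a combined application of Theorems~\ref{thmxA} and \ref{thmxB}. The one point requiring care is that Theorem~\ref{thmxB} needs $A^{\otimes m}$ to be finite, whereas here we assume only that $A$ is simple, separable, exact, and tracially $\mathcal{Z}$-absorbing. I would first record the permanence facts that the minimal tensor product $A^{\otimes m}$ is again simple and exact, is separable, is nuclear whenever $A$ is, and is tracially $\mathcal{Z}$-absorbing because tracial $\mathcal{Z}$-absorption passes to tensor products. To handle finiteness I would then invoke the dichotomy of \cite{FLL21}: a simple separable tracially approximately divisible C*-algebra is either stably finite or purely infinite. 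Since tracial $\mathcal{Z}$-absorption implies tracial approximate divisibility, $A$ lies in exactly one of these classes, and I would split the argument accordingly.

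Suppose first that $A$ is stably finite. As $A$ is simple, exact, and stably finite, the remark following Theorem~\ref{thmxB} (via \cite{Ro02}) shows that $A^{\otimes m}$ is finite. Theorem~\ref{thmxB} then gives that the permutation action $\beta \colon S_m \to \Aut(A^{\otimes m})$ has the weak tracial Rokhlin property. Since $A^{\otimes m}$ is simple, $\sigma$-unital (being separable), and tracially $\mathcal{Z}$-absorbing, Theorem~\ref{thmxA}\eqref{thmxA_it2} applies with $A$ replaced by $A^{\otimes m}$ and shows that all intermediate C*-algebras of the two inclusions are simple and tracially $\mathcal{Z}$-absorbing. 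If moreover $A$ is nuclear, then so is $A^{\otimes m}$, and Theorem~\ref{thmxA}\eqref{thmxA_it3} upgrades the conclusion to nuclear and $\mathcal{Z}$-stable, finishing this case.

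Suppose instead that $A$ is purely infinite; then $A^{\otimes m}$ is simple and purely infinite. Here I would bypass Theorem~\ref{thmxB} entirely. The permutation action $\beta$ is pointwise outer (each nontrivial element of $S_m$ acts as an outer automorphism of $A^{\otimes m}$), so by the Galois correspondence of Corollary~\ref{Cor_intermediate} every intermediate C*-algebra of $A^{\otimes m} \subseteq C^*(S_m, A^{\otimes m}, \beta)$ has the form $C^*(H, A^{\otimes m}, \beta|_H)$ and every intermediate C*-algebra of $(A^{\otimes m})^\beta \subseteq A^{\otimes m}$ has the form $(A^{\otimes m})^{\beta|_H}$ for a subgroup $H \le S_m$. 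Since crossed products and fixed point algebras of outer finite group actions on simple purely infinite C*-algebras are again simple and purely infinite, all these intermediate algebras are simple and purely infinite, hence tracially $\mathcal{Z}$-absorbing; and when $A$ is nuclear they are nuclear, so $\mathcal{Z}$-stable by the equivalence of $\mathcal{Z}$-stability and tracial $\mathcal{Z}$-absorption recalled in the introduction.

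The main obstacle is exactly the finiteness hypothesis in Theorem~\ref{thmxB}, and the role of \cite{FLL21} is to isolate it: once the dichotomy is available, the stably finite branch is a direct application of Theorems~\ref{thmxB} and \ref{thmxA}, while the purely infinite branch follows from the structure theory of purely infinite simple C*-algebras. The remaining work is routine bookkeeping---checking that $A^{\otimes m}$ inherits simplicity, exactness, nuclearity, tracial $\mathcal{Z}$-absorption, and (in the finite branch) finiteness from $A$, and confirming pointwise outerness of $\beta$ in the purely infinite branch.
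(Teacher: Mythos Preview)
Your approach is essentially the same as the paper's: split via the \cite{FLL21} dichotomy, handle the finite branch with Theorems~\ref{thmxB} and~\ref{thmxA}, and handle the purely infinite branch by pointwise outerness plus Izumi's Galois correspondence. The paper actually applies the dichotomy twice (to $A$ and then to $A^{\otimes m}$) and argues directly with traces that $A^{\otimes m}$ cannot be purely infinite when $A$ has stable rank one; your single application, together with the remark after Theorem~\ref{thmxB}, is a cleaner packaging of the same idea and uses exactness in the same place.

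Two small corrections. First, in the purely infinite branch you invoke ``the Galois correspondence of Corollary~\ref{Cor_intermediate}'', but that corollary assumes the weak tracial Rokhlin property, which you have not established (and are deliberately bypassing) in this branch; you should cite Izumi's result \cite[Corollary~6.6]{Izumi2002} directly, as the paper does in Proposition~\ref{prop_outer}. Second, calling pointwise outerness of~$\beta$ ``routine bookkeeping'' undersells it: the paper proves this as Proposition~\ref{prop_outer} via Sakai's theorem on automorphisms of tensor products, and some argument of that kind is genuinely needed.
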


\medskip
The next main result is about $\mathbb{Z}$-actions
and (tracial) $\mathcal{Z}$-absorption of the crossed 
product. This result is a nonunital version
of \cite[Theorem 6.7]{HO13}.
We weaken
the assumption in \cite[Theorem 6.7]{HO13}
that $\alpha^{m}$ acts trivially on $T (A)$ for some
$m \in \mathbb{N}$,
and we don't need separability.
We extend the definition of the (weak) tracial Rokhlin property
for actions of
$\mathbb{Z}$ to the nonunital case (cf.~\cite[Definition~1.1]{OP06}).
See Definitions~\ref{defwtrpaut} and \ref{defwtrpaut_cont}
and Theorem~\ref{thm_int}.

\begin{thmx}\label{thmxD}
Let $A$ be a simple tracially $\mathcal{Z}$-absorbing C*-algebra and let
$\alpha \in \mathrm{Aut} (A)$ have the controlled weak tracial
Rokhlin property.
Then the crossed product $C^{*} (\mathbb{Z}, A, \alpha)$ is    simple and
tracially $\mathcal{Z}$-absorbing.
If moreover $A$ is separable and nuclear
 then   $C^{*} (\mathbb{Z}, A, \alpha)$ is $\mathcal{Z}$-stable.
\end{thmx}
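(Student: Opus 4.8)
The plan is to verify directly the nonunital definition of tracial $\mathcal{Z}$-absorption for $B := C^{*}(\Z, A, \af)$, after first disposing of simplicity. Write $u \in M(B)$ for the canonical unitary implementing $\af$. Since the controlled weak tracial Rokhlin property forces every nonzero power $\af^{n}$ to be outer, simplicity of $B$ follows from simplicity of $A$ by the standard outerness criterion for crossed products by $\Z$ (Kishimoto). For the absorption property, fix a finite set $F \subseteq B$, a tolerance $\ep > 0$, a nonzero positive $a \in B$, and $n \in \N$; after approximating, we may assume every element of $F$ has the form $b u^{k}$ with $b \in A$ and $|k| \le N$ for a fixed $N$. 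The goal is to produce a completely positive contractive order zero map $\ph \colon M_n \to B$ whose image approximately commutes with $F$ and for which $1 - \ph(1)$ is Cuntz subequivalent to $a$. The construction will in fact land $\ph$ inside the subalgebra $A$, while the commutation is checked against all of $F$.

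The two ingredients are combined in a carefully chosen order. First pick the tower height $L \in \N$ so large that $N/L < \ep$ (this governs the boundary error below) and so that a tower complement of ``size $1/L$'' is dominated by $a$ in the Cuntz semigroup. Next, apply the controlled weak tracial Rokhlin property to obtain positive contractions $f_0, \dots, f_{L-1} \in A$ that are approximately orthogonal, approximately central in $A$, satisfy $\|\af(f_j) - f_{j+1}\| < \ep'$ for $0 \le j < L-1$ with the top level $\af(f_{L-1})$ suitably controlled, and with $1 - \sum_j f_j \precsim a$; here the ``controlled'' clause is precisely what substitutes for the hypothesis of \cite[Theorem~6.7]{HO13} that some power of $\af$ act trivially on traces, and what removes the need for separability. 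Only now do we invoke tracial $\mathcal{Z}$-absorption of $A$ to choose an order zero map $\ps \colon M_n \to A$ approximately commuting with the now-finite set $\{f_j\} \cup \{ \af^{-j}(b) : 0 \le j < L,\ b u^{k} \in F \}$ and with $1 - \ps(1)$ tracially small. Finally define
\[
\ph(x) = \sum_{j=0}^{L-1} f_j^{1/2}\, \af^{j}(\ps(x))\, f_j^{1/2}, \qquad x \in M_n .
\]

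It remains to check the three requirements. Approximate orthogonality of the $f_j$ makes $\ph$ an approximate order zero contraction, being a sum of order zero maps with almost orthogonal ranges, so the $L$ summands do not accumulate in norm. Commutation with $u$ is the key computation: since $u f_j u^{*} = \af(f_j) \approx f_{j+1}$, one gets $u\ph(x)u^{*} \approx \sum_j f_{j+1}^{1/2}\af^{j+1}(\ps(x))f_{j+1}^{1/2}$, which equals $\ph(x)$ up to the single boundary term at $j = L-1$, of relative size $O(1/L) < \ep$; iterating gives $\|[\ph(x), u^{k}]\| = O(|k|/L)$ for $|k| \le N$. Commutation with the $A$-part $b$ follows because each $f_j$ is approximately central and $\ps$ was chosen to commute with $\af^{-j}(b)$, so that $[\af^{j}(\ps(x)), b] = \af^{j}([\ps(x), \af^{-j}(b)])$ is small in norm; the approximately orthogonal supports again keep the sum under control, yielding $\|[\ph(x), b u^{k}]\| < \ep$ for each generator of $F$. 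Finally, since $\ps(1)$ is tracially close to $1$ and commutes with the $f_j$, we have $\ph(1) \approx \sum_j f_j$, so $1 - \ph(1)$ is Cuntz dominated by the tower complement $1 - \sum_j f_j \precsim a$.

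The step I expect to be the real work is this last point: controlling $1 - \ph(1)$ in the Cuntz semigroup of the nonunital crossed product. One must show that the order zero defect of $\ph$, the tracial defect $1 - \ps(1)$, and the tower complement $1 - \sum_j f_j$ add up to something still Cuntz subequivalent to $a$, working throughout with approximate units and the subequivalence machinery of \cite{AGJP21} rather than with the tracial state space, and interpreting $1 - \ph(1)$ in the unitization. This is exactly where the ``controlled'' strengthening earns its keep, as it is what makes the complement uniformly small without a trivial trace action and without separability. Once tracial $\mathcal{Z}$-absorption of $B$ is established, the final assertion is immediate: if $A$ is separable and nuclear then so is $B$ (both properties pass to crossed products by $\Z$), and for simple separable nuclear C*-algebras tracial $\mathcal{Z}$-absorption coincides with $\mathcal{Z}$-stability by \cite{HO13, CLS2021}.
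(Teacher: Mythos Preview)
Your overall architecture matches the paper's, but there is a genuine gap in the commutation estimate with $u$, and this gap is tied to a misidentification of where the ``controlled'' hypothesis is actually used.

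You take the Rokhlin contractions $f_0,\ldots,f_{L-1}$ straight from the definition and assert that the boundary discrepancy in $u\ph(x)u^{*}-\ph(x)$ is a single term ``of relative size $O(1/L)$''. It is not. After shifting indices, the difference is essentially $\af(f_{L-1})^{1/2}\af^{L}(\ps(x))\af(f_{L-1})^{1/2}-f_0^{1/2}\ps(x)f_0^{1/2}$, and each of these two summands has norm close to~$1$ (indeed condition~\eqref{defwtrpaut_it4} in Definition~\ref{defwtrpaut} forces $\|f_j\|$ to be near~$1$). Nothing makes this small just because $L$ is large. The paper fixes this by \emph{tapering}: it takes the Rokhlin elements $e_0,\ldots,e_N$ and sets $f_j=(j/M)e_j$ on an initial segment of length~$M$, $f_j=e_j$ in the middle, and $f_j=((N-j)/M)e_j$ on a final segment of length~$M$, so that $f_0=f_N=0$ and $\|\af(f_j)-f_{j+1}\|<1/M+\eta$ everywhere (see the definition of $f_j$ and the estimate~\eqref{Eq_8615_SqErr} in the proof of Lemma~\ref{lem_int}). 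The boundary terms then genuinely vanish, and the $u$-commutation goes through.

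Tapering, however, throws away mass: one now has $\sum_j f_j \ne \sum_j e_j$, and the difference produces the term $w=\sum_{j<M}(1-j/M)e_j^{1/2}h^2e_j^{1/2}+\sum_{j>N-M}(1-(N-j)/M)e_j^{1/2}h^2e_j^{1/2}$ in the Cuntz estimate. This $w$ is a sum of $2M$ pieces, each Cuntz-dominated by an individual $e_j$. \emph{This} is where the controlled hypothesis is spent: Definition~\ref{defwtrpaut_cont}\eqref{defwtrpautc_it5} gives $e_j\precsim a_0$ for every~$j$, so $w\precsim a_0\otimes 1_{2M}$. Your account places the role of ``controlled'' at the tower complement $1-\sum f_j$, but that complement is handled already by the ordinary Rokhlin condition~\eqref{defwtrpautc_it3}; the controlled clause is needed precisely to absorb the tapering loss, a term your construction never produces because it never tapers. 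Separately, the nonunital bookkeeping cannot be done by ``interpreting $1-\ph(1)$ in the unitization'': the paper replaces $1$ by an approximately $\af$-invariant positive contraction $h$ produced by Lemma~\ref{L_8612_AppInv} and works with $(h^2-h\psi(1)h-\eta)_{+}$ throughout, then passes to the given $x$ at the very end.
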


As an example, if $A$ is any simple $\mathcal{Z}$-stable C*-algebra,
 then, starting from an arbitrary automorphism  of $A$,
we obtain an automorphism $\alpha \in \mathrm{Aut} (A)$
with the controlled weak tracial 
Rokhlin property (Example~\ref{Ex_auto}). The resulting 
crossed product is tracially $\mathcal{Z}$-absorbing,
and it is  $\mathcal{Z}$-stable if, in addition, $A$ is separable and nuclear.

\medskip

Note that 
the \emph{controlled} weak tracial
Rokhlin property is equivalent to
the weak tracial
Rokhlin property for automorphisms of 
simple unital exact
tracially $\mathcal{Z}$-absorbing C*-algebras
with finitely many extremal traces
(see Corollary~\ref{cor_cwtrp}).
\medskip

The  paper is organized as follows.
In Section~\ref{sec_crossf},
we prove Theorem~\ref{thmxA}.
In Section ~\ref{Sec_Perm},
we prove that if $A$
is a simple tracially $\mathcal{Z}$-absorbing C*-algebra,
then, under a finiteness condition,
the permutation action of the symmetric group
on the minimal tensor product of finitely many copies of~$A$
has the weak tracial Rokhlin property.
In Section~\ref{sec_int}, we first
define the weak tracial Rokhlin property for automorphisms
of simple (not necessarily unital) C*-algebras.
Then we show---under a mild assumption---that (nonunital)
tracial $\mathcal{Z}$-absorption passes to
crossed products by automorphisms
with the weak tracial Rokhlin property (Theorem~\ref{thm_int}).

\medskip

We use the following (standard) notations.
For a C*-algebra $A$, $A_{+}$ denotes the positive cone of $A$.
Also, $A^{+}$ denotes the minimal unitization of $A$
(adding a new identity even if $A$ is unital),
while $A^{\sim} = A$ if $A$ is unital and
$A^{\sim} = A^{+}$ if $A$ is nonunital.
The notation $a \approx_{\varepsilon} b$
means $\|a - b\| < \varepsilon$.
For $a,b\in A_+$, we use $a\precsim b$ to mean
that $a$ is Cuntz subequivent to $b$.
(See, for example, \cite{Ph14, AGJP21} for 
 preliminaries on the Cuntz subequivalence.)
We write $\mathcal{K} =\mathbb K (\ell^{2})$ and
${M}_{n} =\mathbb  M_{n} (\mathbb{C})$.
We take ${\mathbb{Z}}_n = {\mathbb{Z}} / n {\mathbb{Z}}$.
(The $p$-adic integers will never appear.)
We take $\mathbb{N} = \{ 1, 2, \ldots \}$.
We abbreviate ``completely positive contractive'' to ``c.p.c.''.
If $\af \colon G \to {\mathrm{Aut}} (A)$ is an action of a group~$G$
on a C*-algebra~$A$,
then we denote  the fixed point algebra by $A^{\af}$.
Finally, for any C*-algebra~$A$,
we denote its tracial state space by $T (A)$
and by $A^{\otimes m}$ the minimal tensor product of
$m$ copies of $A$.


\indent

\section{Finite group actions}\label{sec_crossf}

\indent
For actions of finite groups on not necessarily unital C*-algebras,
at least five Rokhlin type properties have appeared in the literature.
One is the multiplier Rokhlin property \cite[Definition~2.15]{Ph09},
which is defined using projections in the multiplier algebra.
A second, given in \cite[Definition~3.1]{Naw2016}
and just called the Rokhlin property,
is defined for $\sigma$-unital C*-algebras using projections
in the central sequence algebra of the given C*-algebra.
A third,
Definition~2 in \cite[Section~3]{San15},
is called there just the Rokhlin property,
and is defined using positive contractions
in the algebra instead of projections.
In the unital case,
it is equivalent to the usual Rokhlin property
(Corollary~1 in \cite[Section~3]{San15}),
in the separable nonunital case
it is equivalent to \cite[Definition~3.1]{Naw2016}
(Corollary~2 in \cite[Section~3]{San15}),
and it is implied by the multiplier Rokhlin property
(Corollary~3 in \cite[Section~3]{San15}),
although the converse is false
(Example~1 in \cite[Section~3]{San15}).
A nonunital version of finite Rokhlin dimension with commuting towers
is given in \cite[Definition 1.14]{HrsPh1},
and of course there is an analog without commuting towers.
The fifth Rokhlin type property
is a weak version
(using positive elements instead of projections)
of the tracial Rokhlin property
for finite group actions on simple
C*-algebras \cite{FG17}.
This is the one we use here.
It is recalled in Definition~\ref{defwtrp} below.
Even in the unital case,
it is much weaker than the first three
properties above,
as discussed in \cite[Example 3.12]{Ph09},
and
it also does not imply finite Rokhlin dimension with commuting towers,
as follows from \cite[Corollary 4.8]{HrsPh1}.
It seems likely that finite Rokhlin dimension with commuting towers
implies the property in Definition~\ref{defwtrp}.
As far as we know,
however, this has never been checked.

In this section we prove
(Theorem~\ref{thm_fg} below) that
if $\alpha \colon G \to \mathrm{Aut} (A)$ is an
action of a finite group $G$ on a simple (not necessarily unital)
tracially $\mathcal{Z}$-absorbing C*-algebra
and $\alpha$ has the weak tracial Rokhlin property,
then $C^{*} (G, A, \alpha)$ and $A^{\alpha}$ are also simple
tracially $\mathcal{Z}$-absorbing C*-algebras.
This is a generalization
of Theorem~5.6 of \cite{HO13} to the nonunital case.
In \cite{HO13} the fixed point algebra is not considered.
We also remove the assumption of separability
from Theorem~5.6 of \cite{HO13}.
(This assumption
is implicit in the statement of Theorem~5.6 of \cite{HO13}
because the authors use their Lemma~5.5 in the proof.)

First we recall 
the definition
of  tracial  $\mathcal{Z}$-absorption
introduced in  \cite{HO13} in the  unital case
and exteneded to the nonunital case in \cite{AGJP21}.

	%
		%
		%
	%

\begin{definition}[\cite{AGJP21}, Definition~3.6]\label{deftza}
	We say that a simple C*-algebra $A$
	is \emph{tracially $\mathcal{Z}$-absorbing} if
	$A \ncong \mathbb{C}$
	and for every $x, a \in A_{+}$ with $a \neq 0$,
	every finite set $F \subseteq A$, every $\varepsilon > 0$,
	and every $n \in \mathbb{N}$,
	there is a c.p.c.~order zero map $\varphi \colon M_{n} \to A$
	such that:
	\begin{enumerate}
		\item\label{deftza-it1}
		$\bigl( x^{2} - x \varphi (1) x - \varepsilon \bigr)_{+} \precsim a$.
		\item\label{deftza-it2}
		$\| [\varphi (z), b] \| < \varepsilon$
		for any $z \in M_{n}$ with $\| z \| \leq 1$ and any $b \in F$.
	\end{enumerate}
\end{definition}

We recall the definition of the
weak tracial Rokhlin property
for finite group actions on simple (not necessarily unital)
C*-algebras from \cite{FG17}.

\begin{definition}[\cite{FG17}, Definition~4.4]\label{defwtrp}
Let $\alpha \colon G \to \mathrm{Aut} (A)$
be an action of a finite group $G$ on a simple C*-algebra~$A$.
Then $\alpha$ has the {\emph{weak tracial Rokhlin property}} if
for every $\varepsilon > 0$,
every finite set $F \subseteq A$,
and every $x, y \in A_{+}$ with $\| x \| = 1$,
there exist orthogonal positive contractions
$f_g \in A$ for $g \in G$ such that,
with $f = \sum_{g \in G} f_{g}$,
the following hold:
\begin{enumerate}
\item\label{WTRP-COM}
$\| f_{g} a - a f_{g} \| < \varepsilon$
for all $a \in F$ and all $g \in G$.
\item\label{WTRP-LTR}
$\|\alpha_{g} (f_{h}) - f_{g h} \| < \varepsilon$ for all $g, h \in G$.
\item\label{WTRP-CU-EQ}
$\big( y^{2} - y f y - \varepsilon \big)_{+} \precsim x$.
\item\label{WTRP-OUT-CO}
$\|f x f\| > 1 - \varepsilon$.
\end{enumerate}
\end{definition}

We relate the weak tracial Rokhlin property
to the generalized tracial Rokhlin property
of \cite{HO13}.

\begin{proposition}\label{P_8615_WeakToGen}
Let $\alpha \colon G \to \mathrm{Aut} (A)$
be an action of a finite group $G$
on a simple unital C*-algebra~$A$.
\begin{enumerate}[label=$\mathrm{(\arabic*)}$]
\item\label{P_8615_WeakToGen_ToGen}
If $\alpha$ has the weak tracial Rokhlin property,
then $\alpha$ has the generalized tracial Rokhlin property
of \cite[Definition 5.2]{HO13}.
\item\label{P_8615_WeakToGen_ToWk}
If $A$ is finite
and $\alpha$ has the generalized tracial Rokhlin property
of \cite[Definition 5.2]{HO13},
then $\alpha$ has the weak tracial Rokhlin property.
\end{enumerate}
\end{proposition}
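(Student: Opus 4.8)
The plan is to match the two lists of conditions term by term; the only genuine gap will be the nondegeneracy condition~\eqref{WTRP-OUT-CO}, and this is exactly where the finiteness hypothesis of part~\ref{P_8615_WeakToGen_ToWk} will enter. I will use that, in the unital setting, the generalized tracial Rokhlin property of \cite[Definition~5.2]{HO13} asks, for given $\varepsilon$, a finite set $F$, and a positive $x$ with $\|x\|=1$, for orthogonal positive contractions $(f_g)_{g\in G}$ satisfying the equivariance and centrality estimates~\eqref{WTRP-LTR} and~\eqref{WTRP-COM} together with a comparison estimate of the form $\bigl( 1 - \sum_{g} f_g - \varepsilon \bigr)_{+} \precsim x$, but with no analogue of the nondegeneracy condition~\eqref{WTRP-OUT-CO}.

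For part~\ref{P_8615_WeakToGen_ToGen} I would fix $\varepsilon$, $F$, and $x \in A_{+}$ with $\|x\|=1$, and apply the weak tracial Rokhlin property (Definition~\ref{defwtrp}) with this $\varepsilon$ and $F$, with the given $x$, and with $y = 1$. The resulting orthogonal positive contractions $(f_g)_{g\in G}$ satisfy~\eqref{WTRP-COM} and~\eqref{WTRP-LTR} verbatim, and, writing $f = \sum_{g} f_g$, condition~\eqref{WTRP-CU-EQ} with $y = 1$ reads $\bigl( 1 - f - \varepsilon \bigr)_{+} \precsim x$, which is exactly the comparison estimate required by \cite[Definition~5.2]{HO13}. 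Since exactly orthogonal positive contractions are a fortiori approximately orthogonal, this is all that is needed; condition~\eqref{WTRP-OUT-CO} is simply discarded and no finiteness is used.

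For part~\ref{P_8615_WeakToGen_ToWk} I would fix $\varepsilon$, $F$, and $x, y \in A_{+}$ with $\|x\|=1$ as in Definition~\ref{defwtrp}, and apply the generalized tracial Rokhlin property with a tolerance $\delta$ (small, chosen at the end), the set $F$, and the element $x$, obtaining positive contractions $(f_g)_{g\in G}$ which, after a routine functional-calculus perturbation to exactly orthogonal contractions if the definition only provides approximate orthogonality, already give~\eqref{WTRP-COM} and~\eqref{WTRP-LTR} for $\delta \le \varepsilon$. To produce~\eqref{WTRP-CU-EQ} for the prescribed $y$, set $f = \sum_{g} f_g$ and note that $y^2 - y f y = y(1-f)y$; with $c = (1-f)^{1/2} y$ one computes $y(1-f)y = c^{*}c \sim c c^{*} = (1-f)^{1/2} y^{2} (1-f)^{1/2} \le \|y\|^{2} (1-f)$, so that $y(1-f)y \precsim 1 - f$. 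Feeding this into the comparison estimate of the generalized property through the standard Cuntz-subequivalence lemmas relating $(a - \varepsilon)_{+}$ to $(b - \delta)_{+}$ when $a \precsim b$ (and choosing $\delta$ small enough to absorb the resulting cut) yields $\bigl( y^2 - y f y - \varepsilon \bigr)_{+} \precsim x$, which is~\eqref{WTRP-CU-EQ}. At this point $(f_g)_{g\in G}$ satisfies every condition of the weak tracial Rokhlin property except possibly~\eqref{WTRP-OUT-CO}, and, because $A$ is finite, \cite[Proposition~7.12]{AGJP21} supplies~\eqref{WTRP-OUT-CO} from the remaining conditions.

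The hard part---indeed the only nonformal step---will be the recovery of the nondegeneracy condition~\eqref{WTRP-OUT-CO} in part~\ref{P_8615_WeakToGen_ToWk}: the generalized tracial Rokhlin property carries no nondegeneracy requirement, and without finiteness the Rokhlin elements could in principle collapse (in a purely infinite algebra the comparison $1 - f \precsim x$ imposes no lower bound on $f$). This is precisely the content I would delegate to \cite[Proposition~7.12]{AGJP21}, whose finiteness hypothesis is the reason part~\ref{P_8615_WeakToGen_ToWk} requires $A$ to be finite; by contrast, the reconciliation of the comparison conditions via the identity $y^2 - y f y = y(1-f)y$ and the perturbation to exact orthogonality are routine.
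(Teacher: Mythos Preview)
Your part~\ref{P_8615_WeakToGen_ToGen} has a genuine gap: you have misread \cite[Definition~5.2]{HO13}. That definition requires, in addition to the centrality and equivariance estimates, that $\|e_g\| = 1$ for every $g$ (their condition~(1)) and that $1 - \sum_g e_g \precsim a$ with \emph{no} $\varepsilon$-cut (their condition~(2)). The raw Rokhlin contractions $(f_g)$ from Definition~\ref{defwtrp} need not have norm~$1$, and taking $y=1$ in~\eqref{WTRP-CU-EQ} gives only $(1-f-\varepsilon)_+ \precsim x$. The paper repairs both defects simultaneously by a functional calculus trick: it applies a plateau function $k$ (continuous, $k(0)=0$, $k\equiv 1$ on $[\tfrac13,1]$) to each $f_g$ and sets $e_g = k(f_g)$. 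Condition~\eqref{WTRP-OUT-CO} is then used to show $\|f\| > (1-\varepsilon_0)^{1/2} > \tfrac23$, so some $\|f_{g_0}\| > \tfrac23$, and~\eqref{WTRP-LTR} propagates this to $\|f_g\| > \tfrac13$ for every $g$, forcing $\|e_g\| = 1$; the same functional calculus yields $1-k(f) = l(1-f) \sim (1-f-\tfrac23)_+ \leq (1-f-\varepsilon_0)_+ \precsim x$. So condition~\eqref{WTRP-OUT-CO} is not ``simply discarded'': it is precisely what drives the norm-$1$ conclusion.

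Your part~\ref{P_8615_WeakToGen_ToWk} is correct but differs from the paper. Rather than invoking \cite[Proposition~7.12]{AGJP21}, the paper uses finiteness directly via \cite[Lemma~2.9]{Ph14}: before applying the generalized property it chooses $z \in (\overline{xAx})_+ \setminus \{0\}$ so small that any positive contraction $c$ with $c \precsim z$ satisfies $\|(1-c)x(1-c)\| > 1-\varepsilon$; applying \cite[Definition~5.2]{HO13} with $z$ as the comparison element then gives $(f_g)$ with $1-f \precsim z$, whence~\eqref{WTRP-OUT-CO} is immediate and~\eqref{WTRP-CU-EQ} follows from $z \precsim x$ together with the $y(1-f)y \precsim 1-f$ computation, which the paper does exactly as you do. Your delegation to \cite[Proposition~7.12]{AGJP21} is arguably cleaner; the paper's approach is more self-contained and exhibits a single family $(f_g)$ satisfying all four conditions at once.
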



\begin{proof}[Proof of Proposition~\ref{P_8615_WeakToGen}]
Suppose $\alpha$ has the weak tracial Rokhlin property.

Define continuous functions
$k, l \colon [0, 1] \to [0, 1]$
by
\[
k ( t )
 = \begin{cases}
   3 t & \hspace*{1em} 0 \leq t \leq \frac{1}{3}
       \\
   1   & \hspace*{1em} \frac{1}{3} < t \leq 1
\end{cases}
\andeqn
l (t)
 = \begin{cases}
   0       & \hspace*{1em} 0 \leq t \leq \frac{2}{3}
       \\
   3 t - 2 & \hspace*{1em} \frac{2}{3} < t \leq 1.
\end{cases}
\]

To prove that $\alpha$ has the generalized tracial Rokhlin property,
let $\varepsilon > 0$,
let $F \subseteq A$ be finite,
and let $a \in A_{+} \setminus \{ 0 \}$.
Without loss of generality,
$\| c \| \leq 1$ for all $c \in F$.

Use \cite[Lemma 2.5.11(2)]{LnBook}
to choose $\dt_1 > 0$
such that whenever $b, c \in A$
satisfy $0 \leq b, c \leq 1$
and $\| b - c \| < \dt_1$,
then $\| k (b) - k (c) \| < \ep$.
Use \cite[Lemma 2.5]{AP16}
to choose $\dt_2 > 0$
such that whenever $b \in A$
satisfies $0 \leq b \leq 1$
and $c \in F$
satisfies $\| [b, c] \| < \dt_2$,
then $\| [k (b), \, c] \| < \ep$.

Define
$\varepsilon_0
 = \min \big( \dt_1, \dt_2, \frac{1}{3} \big)$.
Set $x = \| a \|^{-1} a$ and $y = 1$.
Apply Definition~\ref{defwtrp}
with $\varepsilon_0$ in place of $\varepsilon$, with $F$ as given,
and these choices of $x$ and~$y$,
getting orthogonal positive contractions
$f_g \in A$ for $g \in G$.
Set $f = \sum_{g \in G} f_{g}$.
Condition~\eqref{WTRP-OUT-CO} in Definition~\ref{defwtrp}
and $\varepsilon_0 \leq \frac{1}{3}$ imply $\| f x f \| > \frac{2}{3}$.
In particular,
$\| f \| > \big( \frac{2}{3} \big)^{1/2} > \frac{2}{3}$.
So there is at least one $g_0 \in G$
such that $\| f_{g_0} \| > \frac{2}{3}$,
and now Condition~\eqref{WTRP-LTR} in Definition~\ref{defwtrp}
implies $\| f_{g} \| > \frac{1}{3}$ for all $g \in G$.

Define $e_g = k (f_g)$ for $g \in G$.
The elements $e_g$ are orthogonal positive contractions,
and the inequality $\| f_{g} \| > \frac{1}{3}$
ensures that $\| e_g \| = 1$ for all $g \in G$.
In particular, we have (1) in \cite[Definition 5.2]{HO13}.

By $\varepsilon_0 \leq \dt_1$ and the choice of $\dt_1$,
we have
$\| \alpha_{g} (e_{h}) - e_{g h} \| < \varepsilon$ for all $g, h \in G$,
which is (4) in \cite[Definition 5.2]{HO13}.
Using $\dt_2$ in place of~$\dt_1$,
we similarly get
$\| e_{g} a - a e_{g} \| < \varepsilon$
for all $a \in F$ and all $g \in G$,
which is (3) in \cite[Definition 5.2]{HO13}.

Define $e = \sum_{g \in G} e_{g}$.
It remains to show that
$1 - e \precsim a$.
One easily checks that
$e = k (f)$.
We have
$1 - k (t) = l (1 - t)$
for all $t \in [0, 1]$,
and for any $b \in A$ with $0 \leq b \leq 1$
we have
$l (b) \sim \big( b - \frac{2}{3} \big)_{+}$.
Therefore
\[
1 - e
 = 1 - k (f)
 = l (1 - f)
 \sim \big( 1 - f - \tfrac{2}{3} \big)_{+}
 \leq (1 - f - \ep)_{+}
 \precsim x
 \sim a,
\]
as desired.

Now suppose that $A$ is finite
and $\af$ has the generalized tracial Rokhlin property.
Let $\varepsilon$, $F$, $x$, and~$y$
be as in Definition~\ref{defwtrp}.
By \cite[Lemma 2.9]{Ph14},
there is $z \in \big( {\overline{x A x}} \big)_{+} \setminus \{ 0 \}$
such that whenever $c \in A_{+}$ satisfies $0 \leq c \leq 1$
and $c \precsim z$,
then $\| (1 - c) x (1 - c) \| > 1 - \ep$.
Apply \cite[Definition 5.2]{HO13},
getting
orthogonal positive contractions
$f_g \in A$ for $g \in G$ such that,
with $f = \sum_{g \in G} f_{g}$,
the following hold:
\begin{enumerate}
\item\label{P_8615_WeakToGen_Sub}
$1 - f \precsim z$.
\item\label{P_8615_WeakToGen_Comm}
$\| f_{g} a - a f_{g} \| < \varepsilon$
for all $a \in F$ and all $g \in G$.
\item\label{P_8615_WeakToGen_LeftTr}
$\|\alpha_{g} (f_{h}) - f_{g h} \| < \varepsilon$ for all $g, h \in G$.
\end{enumerate}
The last two conditions are
\eqref{WTRP-COM} and~\eqref{WTRP-LTR}
in Definition~\ref{defwtrp}.
For Condition~\eqref{WTRP-CU-EQ} in Definition~\ref{defwtrp},
we use
\eqref{P_8615_WeakToGen_Sub} above at the fourth step
and $z \in \big( {\overline{x A x}} \big)_{+}$
at the fifth step,
to get
\begin{align*}
\big( y^{2} - y f y - \varepsilon \big)_{+}
& \leq y (1 - f) y
\\
& \sim (1 - f)^{1/2} y^2 (1 - f)^{1/2}
  \leq \| y \|^2 (1 - f)
  \precsim z
  \precsim x.
\end{align*}
Condition~\eqref{WTRP-OUT-CO} in Definition~\ref{defwtrp}
follows from \eqref{P_8615_WeakToGen_Sub} above
and the choice of~$z$.
\end{proof}

\begin{lemma}\label{lem_fg}
Let $A$ be a simple tracially $\mathcal{Z}$-absorbing C*-algebra and
let $\alpha \colon G \to \mathrm{Aut} (A)$ be an action of
a finite group $G$ on $A$ with the weak tracial Rokhlin property.
Then for every finite set $F \subseteq A$,
every $\varepsilon > 0$, every  nonzero positive element
$a \in A$, every positive contraction $x \in A^{\alpha}$, and
every $n \in \mathbb{N}$, there is a c.p.c.~order zero
map $\psi \colon M_{n} \to A$ such that:
\begin{enumerate}[label=$\mathrm{(\arabic*)}$]
\item\label{TZS_WTRP_1}
$\big( x^{2} - x \psi (1) x - \varepsilon \big)_{+} \precsim a$.
\item\label{TZS_WTRP_2}
$\| [\psi (z), y] \| < \varepsilon$
for any $z \in M_{n}$ with $\| z \| \leq 1$ and any $y \in F$.
\item\label{TZS_WTRP_3}
$\| \alpha_{g} (\psi (z)) - \psi (z) \| < \varepsilon$
for any $z \in {M}_{n}$
with $\| z \| \leq 1$ and any $g \in G$.
\setcounter{TmpEnumi}{\value{enumi}}
\end{enumerate}
\end{lemma}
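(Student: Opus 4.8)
The plan is to build the equivariant order zero map $\psi$ by cutting a tracial-$\mathcal Z$ order zero map down by a Rokhlin tower and spreading it around the group by means of $\alpha$. First I would enlarge $F$ to the $\alpha$-invariant finite set $F_0 = F \cup \{\alpha_g(y) : g \in G,\ y \in F\} \cup \{x\}$ and fix a small tolerance $\delta > 0$, together with auxiliary positive elements $a_1$ and $x'$ whose precise choice (via simplicity) is deferred to the last paragraph. Applying Definition~\ref{deftza} to $x$, the target $a_1$, the set $F_0$, tolerance $\delta$, and $n$, I obtain a c.p.c.\ order zero map $\varphi \colon M_n \to A$ with $\bigl(x^2 - x\varphi(1)x - \delta\bigr)_+ \precsim a_1$ and $\|[\varphi(z), b]\| < \delta$ for all contractions $z$ and all $b \in F_0$ (it suffices to impose the latter on the matrix units). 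I would then apply Definition~\ref{defwtrp} to $F_1 = F_0 \cup \{\varphi(e_{ij})\}$ with tolerance $\delta$, taking the cut-down element to be the $\alpha$-invariant $x$ and the small target to be the norm-one $x'$; this produces orthogonal positive contractions $f_g$ with $f = \sum_g f_g$ satisfying $\|[f_g, b]\| < \delta$, $\|\alpha_g(f_h) - f_{gh}\| < \delta$, and $\bigl(x^2 - xfx - \delta\bigr)_+ \precsim x'$. Finally I set
\[
\psi_0(z) = \sum_{g \in G} f_g^{1/2}\, \alpha_g(\varphi(z))\, f_g^{1/2}.
\]

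Since the $f_g$ are orthogonal, the summands of $\psi_0$ have orthogonal ranges; and because $f_g$ almost commutes with $\alpha_g(\varphi(z))$ (apply $\alpha_g$ to $[f_e, \varphi(z)]$ and use $\alpha_g(f_e) \approx f_g$), each summand is an approximate cut-down of the order zero map $\alpha_g \circ \varphi$ and is therefore approximately order zero. Hence $\psi_0$ is approximately order zero, and I would invoke the standard fact that an approximately order zero c.p.c.\ map out of a finite-dimensional C*-algebra is norm-close to an exact c.p.c.\ order zero map $\psi$; all the estimates below are stable under this perturbation, in particular $\psi(1) \approx \psi_0(1)$. Condition~\ref{TZS_WTRP_2} then follows by expanding $[\psi_0(z), y] = \sum_g [f_g^{1/2}\alpha_g(\varphi(z))f_g^{1/2}, y]$ and bounding each commutator using $\|[f_g, y]\| < \delta$ and $\alpha_g[\varphi(z), \alpha_{g^{-1}}(y)]$ with $\alpha_{g^{-1}}(y) \in F_0$. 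Condition~\ref{TZS_WTRP_3} follows from the reindexing $\alpha_h(\psi_0(z)) = \sum_g \alpha_h(f_g)^{1/2}\alpha_{hg}(\varphi(z))\alpha_h(f_g)^{1/2} \approx \sum_{g'} f_{g'}^{1/2}\alpha_{g'}(\varphi(z))f_{g'}^{1/2} = \psi_0(z)$ via $\alpha_h(f_g) \approx f_{hg}$. In both cases one takes $\delta < \varepsilon/(C|G|)$ for a suitable constant.

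The crux is Condition~\ref{TZS_WTRP_1}. Noting $\psi_0(1) \le f$, I split
\[
x^2 - x\psi_0(1)x = \bigl(x^2 - xfx\bigr) + x\bigl(f - \psi_0(1)\bigr)x.
\]
The first summand is handled by the weak tracial Rokhlin largeness, $\bigl(x^2 - xfx - \delta\bigr)_+ \precsim x'$. For the second, $f - \psi_0(1) = \sum_g f_g^{1/2}\bigl(1 - \alpha_g(\varphi(1))\bigr)f_g^{1/2}$, and using that $x$ is $\alpha$-invariant and almost commutes with each $f_g$, I get $x\bigl(f-\psi_0(1)\bigr)x \approx \sum_g f_g^{1/2}\alpha_g\bigl(x(1-\varphi(1))x\bigr)f_g^{1/2}$. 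Since $x(1-\varphi(1))x = x^2 - x\varphi(1)x$ has $\delta$-cut-down $\precsim a_1$, and the summands lie in the orthogonal corners $\overline{f_g A f_g}$, the whole second summand has cut-down dominated by the orthogonal sum of the translates $\alpha_g(a_1)$.

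The device that closes the estimate is the up-front choice of targets using simplicity. I would fix $|G|+1$ mutually orthogonal nonzero positive elements $d_0, (d_g)_{g \in G}$ in $\overline{a A a}$ (possible since a simple tracially $\mathcal{Z}$-absorbing algebra is non-elementary), whose orthogonal sum is automatically $\precsim a$ as it lies in $\overline{aAa}$; I then take $x' \precsim d_0$ and choose the tracial-$\mathcal Z$ target $a_1$ to be a nonzero positive element with $a_1 \precsim \alpha_{g^{-1}}(d_g)$ for every $g$ simultaneously, which exists because finitely many nonzero positive elements of a simple C*-algebra admit a common nonzero Cuntz-subequivalent. Then $\alpha_g(a_1) \precsim d_g$, so the second summand is $\precsim \sum_g d_g$ and the first is $\precsim d_0$; combining the two orthogonal estimates yields $\bigl(x^2 - x\psi(1)x - \varepsilon\bigr)_+ \precsim d_0 + \sum_g d_g \precsim a$. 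The main obstacle is exactly this Condition~\ref{TZS_WTRP_1}: one must reconcile two independent largeness inputs and route the $\alpha$-translated pieces of the tracial defect into disjoint corners of the single target $a$, whereas the order zero property and Conditions~\ref{TZS_WTRP_2} and~\ref{TZS_WTRP_3} reduce to routine perturbation and reindexing once $\delta$ is small.
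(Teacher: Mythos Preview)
Your proposal is correct and follows essentially the same strategy as the paper: build an approximately order zero map by averaging $\alpha_g\circ\varphi$ over a Rokhlin tower, perturb to an exact order zero map via Kirchberg--Winter, and route the Cuntz defect into pre-chosen orthogonal pieces of~$a$. The one structural difference is the order in which you invoke the two hypotheses: you apply tracial $\mathcal{Z}$-absorption first (so $\varphi$ is chosen before the tower) and then ask the Rokhlin contractions $f_g$ to commute with $\varphi(e_{ij})$, whereas the paper fixes the tower $e_g$ first and then chooses $\varphi$ to commute with all $\alpha_h(e_g)$ and $\alpha_h(e_g)^{1/2}$. Both orders work. The paper's order makes the order-zero check and the square-root commutator estimates marginally cleaner (everything needed is placed directly into the finite set handed to $\varphi$), while your order requires the small detour $[f_g,\alpha_g(\varphi(z))]\approx\alpha_g([f_e,\varphi(z)])$ via $\alpha_g(f_e)\approx f_g$.

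One minor point: in your treatment of Condition~\ref{TZS_WTRP_1} you speak of ``combining the two orthogonal estimates'', but the two summands $x^2-xfx$ and $x(f-\psi_0(1))x$ are not orthogonal to each other. The rigorous way to finish---and this is exactly what the paper does---is to observe that
\[
x^2-x\psi_0(1)x\ \approx\ (x^2-xfx-\delta)_+ \;+\; \sum_{g\in G} f_g^{1/2}\,\alpha_g\bigl((x^2-x\varphi(1)x-\delta)_+\bigr)\,f_g^{1/2},
\]
a single manifestly positive element which is $\precsim d_0\oplus\bigoplus_g d_g\precsim a$, and then apply the standard cut-down lemma. Your sketch already contains all the ingredients for this; only the phrasing is slightly imprecise.
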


\begin{proof}
Let $F$, $\varepsilon$, $a$, $x$, and~$n$ be given as in the statement.
We may assume that $F \subseteq A_{+}$
and that $\| y \| \leq 1$ for all $y \in F$.
Choose $\delta > 0$ such that $\delta < \varepsilon / [7  \card (G)]$.

Use \cite[Proposition 2.5]{KW04}
to choose $\eta_0 > 0$ such that
whenever $\varphi \colon M_{n} \to A$
is a c.p.c.\  map
such that $\|\varphi (y) \varphi (z) \| < \eta_0$ for
all $y, z \in (M_{n})_{+}$ with $y z = 0$ and $\|y\|, \|z\| \leq 1$
(a c.p.c.\  $\et_0$-order zero map),
then there is a c.p.c.~order zero map
$\psi \colon M_{n} \to A$
such that $\|\varphi (z) - \psi (z) \| < \delta$
for all $z \in M_{n}$ with $\| z \| \leq 1$.

Use \cite[Lemma~2.5]{AP16} to choose
$\eta_1 > 0$ such that whenever $D$ is a C*-algebra
and $h, k \in D$ satisfy $0 \leq h, k \leq 1$
and $\| [h, k]\| < \eta_1$, then $\| [h^{1/2}, k] \| < \delta$.
Now set $\eta = \min (\eta_0, \eta_1, \delta)$.

We claim that there is $b \in A_{+} \setminus \{ 0 \}$ such that:
\begin{enumerate}
\setcounter{enumi}{\value{TmpEnumi}}
\item\label{TZS_WTRP_4}
$b \oplus \bigoplus_{g \in G} \alpha_{g} (b) \precsim a$.
\setcounter{TmpEnumi}{\value{enumi}}
\end{enumerate}
To prove the claim,
first set $k = \card (G) + 1$
and use \cite[Lemma~2.4]{Ph14} to find
$c \in A_{+} \setminus \{ 0 \}$ such that
$c \otimes 1_{k} \precsim a$ in $M_{\infty} (A)$.
Then \cite[Lemma~2.6]{Ph14} implies that there is
$b \in A_{+} \setminus \{ 0 \}$
such that $b \precsim \alpha_{g}^{-1} (c)$ for all $g \in G$.
Thus,
\[
b \oplus \bigoplus_{g \in G} \alpha_{g} (b)
 \precsim c \oplus \bigoplus_{g \in G} c
 = c \otimes 1_{k}
 \precsim a,
\]
which is~(\ref{TZS_WTRP_4}).

Since $\alpha$ has the weak tracial Rokhlin property,
there are orthogonal positive contractions $e_g \in A$ for $g \in G$
such that,
with $e = \sum_{g \in G} e_{g}$, the following hold:
\begin{enumerate}
\setcounter{enumi}{\value{TmpEnumi}}
\item\label{TZS_WTRP_5}
$\| e_{g} y - y e_{g} \| < \eta$
for all $y \in F \cup \{x\}$ and all $g \in G$.
\item\label{TZS_WTRP_6}
$\|\alpha_{g} (e_{h}) - e_{gh} \| < \eta$ for all $g, h \in G$.
\item\label{TZS_WTRP_7}
$\big( x^{2} - x e x - \eta \big)_{+} \precsim b$.
\setcounter{TmpEnumi}{\value{enumi}}
\end{enumerate}
Set
\[
E = \big\{\alpha_{g} (y) \colon g \in G,  \, y \in F \big\}
   \cup \big\{\alpha_{h} (e_{g}), \, \alpha_{h} (e_{g})^{1/2}
         \colon g, \, h \in G \big\},
\]
which is a finite subset of~$A$.
Since $A$ is tracially $\mathcal{Z}$-absorbing,
there is a c.p.c.~order zero map $\varphi \colon M_{n} \to A$
such that the following hold:
%
\begin{enumerate}
\setcounter{enumi}{\value{TmpEnumi}}
\item\label{TZS_WTRP_9}
$\big( x^{2} -x \varphi (1) x - \eta \big)_{+} \precsim b$.
\item\label{TZS_WTRP_8}
$\| [\varphi (z), y] \| < \eta$ for any
$z \in M_{n}$ with $\| z \| \leq 1$ and any $y \in E$.
\setcounter{TmpEnumi}{\value{enumi}}
\end{enumerate}

Define a c.p.c.~map $\widetilde{\varphi} \colon M_{n} \to A$ by
\[
\widetilde{\varphi} (z)
 = \sum_{g \in G} e_{g}^{1/2} \alpha_{g} (\varphi (z)) e_{g}^{1/2}
\]
for $z \in M_n$.
It follows from \eqref{TZS_WTRP_8}
that:
\begin{enumerate}
\setcounter{enumi}{\value{TmpEnumi}}
\item\label{TZS_WTRP_New}
$\big\| \widetilde{\varphi} (z)
    - \sum_{g \in G} e_{g} \alpha_{g} (\varphi (z)) \big\|
 < \eta \card (G)$
for any $z \in M_{n}$ with $\| z \| \leq 1$.
\setcounter{TmpEnumi}{\value{enumi}}
\end{enumerate}
%
For $y, z \in (M_{n})_{+}$
with $y z = 0$ and $\|y\|, \|z\| \leq 1$,
we use orthogonality of the elements $e_g$
at the first two steps,
$\varphi (y) \varphi (z) = 0$ at the third step,
and \eqref{TZS_WTRP_8} at the fourth step,
getting
\begin{align*}
\| \widetilde{\varphi} (y) \widetilde{\varphi} (z) \|
& = \bigg\| \sum_{g \in G}
   e_{g}^{1/2} \alpha_{g} (\varphi (y)) e_{g}^{1/2}
    \cdot e_{g}^{1/2} \alpha_{g} (\varphi (z)) e_{g}^{1/2} \bigg\|
\\
& = \max_{g \in G} \big\| e_{g}^{1/2} \alpha_{g} (\varphi (y))
    e_{g} \alpha_{g} (\varphi (z)) e_{g}^{1/2} \big\|
\\
& \leq \max_{g \in G} \big\|
 \big[ \alpha_{g}^{-1} (e_{g}), \, \varphi (y) \big] \big\|
  < \eta
  \leq \eta_0.
\end{align*}
By the choice of $\eta_0$, there is a c.p.c.~order zero map
$\psi \colon M_{n} \to A$ such that:
\begin{enumerate}
\setcounter{enumi}{\value{TmpEnumi}}
\item\label{TZS_WTRP_X}
$\|\widetilde{ \varphi} (z) - \psi (z) \| < \delta$
for any $z \in M_{n}$ with $\| z \| \leq 1$.
\setcounter{TmpEnumi}{\value{enumi}}
\end{enumerate}
We will show that $\psi$ has the desired properties
\ref{TZS_WTRP_1}, \ref{TZS_WTRP_2}, and~\ref{TZS_WTRP_3}
in the statement.

We prove~\ref{TZS_WTRP_1}.
Using~\eqref{TZS_WTRP_7} and \eqref{TZS_WTRP_9}
at the first step
and \eqref{TZS_WTRP_4} at the third step, we get
\begin{align}\label{lem_fg_eq1}
& \big( x^{2} - x e x - \eta \big)_{+}
  + \sum_{g \in G} e_{g}^{1/2}
      \alpha_{g}
          \big( \big( x^{2} - x \varphi (1) x - \eta \big)_{+} \big)
         e_{g}^{1/2}
\\
& \hspace*{3em} {\mbox{}}
   \precsim b \oplus \bigoplus_{g \in G}
              e_{g}^{1/2} \alpha_g (b) e_{g}^{1/2}
   \precsim b \oplus \bigoplus_{g \in G} \alpha_{g} (b)
   \precsim a.
\notag
\end{align}
On the other hand,
using the assumption that $x \in A^{\alpha}$ at the second step,
using \eqref{TZS_WTRP_5}, $\eta \leq \eta_1$,
and the choice of $\eta_1$ at the third step,
and using \eqref{TZS_WTRP_X} at the last step,
we get
\begin{align*}
& \big( x^{2} - x e x - \eta \big)_{+}
    + \sum_{g \in G} e_{g}^{1/2} \alpha_{g}
         \big( \big( x^{2} - x \varphi (1) x - \eta \big)_{+} \big)
      e_{g}^{1/2}
\\
& \hspace*{3em} {\mbox{}}
 \approx_{2 \eta} x^{2} - x e x + \sum_{g \in G} e_{g}^{1/2}
\alpha_{g} \big( x^{2} - x \varphi (1) x \big) e_{g}^{1/2}
\\
& \hspace*{3em} {\mbox{}}
 = x^{2} - x e x + \sum_{g \in G} e_{g}^{1/2} x^{2} e_{g}^{1/2}
    - \sum_{g \in G}
       e_{g}^{1/2} x \alpha_{g} (\varphi (1)) x e_{g}^{1/2}
\\
& \hspace*{3em} {\mbox{}}
 \approx_{4 \delta \card (G)} x^{2} - x e x + \sum_{g \in G} x e_{g} x
  - \sum_{g \in G} x e_{g}^{1/2} \alpha_{g} (\varphi (1)) e_{g}^{1/2} x
\\
& \hspace*{3em} {\mbox{}}
 = x^{2} - x \widetilde{\varphi} (1) x
\\
& \hspace*{3em} {\mbox{}}
 \approx_{\delta} x^{2} - x {\psi} (1) x.
\end{align*}
So
\begin{align*}
&
\Bigg\| x^{2} - x \psi (1) x
 - \Bigg[ (x^{2} - x e x - \eta)_{+} +
     \sum_{g \in G} e_{g}^{1/2}
        \alpha_{g} \big( (x^{2} - x \varphi (1) x - \eta \big)_{+} \big)
             e_{g}^{1/2} \Bigg] \Bigg\|
\\
& \hspace*{3em} {\mbox{}}
  < 2 \eta + 4 \delta  \card (G) + \delta
  \leq 7 \delta  \card (G)
  < \varepsilon.
\end{align*}
Therefore, using \eqref{lem_fg_eq1} at the second step, we have
\begin{align*}
& \big( x^{2} - x \psi (1) x - \varepsilon \big)_{+}
\\
& \hspace*{3em} {\mbox{}}
\precsim \big( x^{2} - x e x - \eta \big)_{+}
  + \sum_{g \in G} e_{g}^{1/2}
     \alpha_{g} \big( \big( x^{2}
        - x \varphi (1) x - \eta \big)_{+} \big) e_{g}^{1/2}
 \precsim a,
\end{align*}
as desired.

To prove \ref{TZS_WTRP_2}, let $y \in F$
and let $z \in M_{n}$ satisfy $\| z \| \leq 1$.
Using \eqref{TZS_WTRP_X} at the first step,
\eqref{TZS_WTRP_New} at the second step,
and \eqref{TZS_WTRP_5} and \eqref{TZS_WTRP_8} at the third step,
we have
\begin{align*}
\|[\psi (z), y] \|
& < 2 \delta + \|[\widetilde{\varphi} (z), y] \|
\\
& \leq  2 \delta + 2 \eta \card (G)
  + \Bigg\| \Bigg[
     \sum_{g \in G} e_{g} \alpha_{g} (\varphi (z)), y \Bigg] \Bigg\|
\\
& < 2 \delta + 2 \eta  \card (G)+ 2 \eta  \card (G)
  \leq 6 \delta \card (G)
  < \varepsilon.
\end{align*}

It remains to show \ref{TZS_WTRP_3}.
Let $g \in G$ and let $z \in M_{n}$ satisfy $\| z \| \leq 1$.
Using~\eqref{TZS_WTRP_X} at the first step,
\eqref{TZS_WTRP_New} at the second step,
and~\eqref{TZS_WTRP_6} at the third step we have:
\begin{align*}
\| \alpha_{g} (\psi (z))
& - \psi (z) \|
\\
& < 2 \delta + \| \alpha_{g} (\widetilde{\varphi} (z))
  - \widetilde{\varphi} (z) \|
\\
& \leq 2 \delta +2 \eta   \card (G) +
\Bigg\| \sum_{h \in G} \alpha_{g} (e_{h}) \alpha_{gh} (\varphi (z)) -
\sum_{h \in G} e_{gh} \alpha_{gh} (\varphi (z)) \Bigg\|
\\
& \leq 2 \delta + 2 \eta \card (G) + \eta \card (G)
  \leq 5 \delta  \card (G)
  < \varepsilon.
\end{align*}
This finishes the proof.
\end{proof}

By averaging over the group
and applying \cite[Proposition 2.5]{KW04} again,
we can improve the statement of Lemma~\ref{lem_fg}
to require that $\alpha_g (\psi (z)) = \psi (z)$
for all $z \in M_n$ and all $g \in G$.
It seems simpler to do without this step.
Moreover, the way we do it,
the proof of Theorem~\ref{thm_fg} is a better model for
the proof of Theorem~\ref{thm_int}, the case $G = \mathbb{Z}$.

\begin{theorem}\label{thm_fg}
Let $\alpha \colon G \to \mathrm{Aut} (A)$ be an
action of a finite group $G$ on a simple (not necessarily unital)
tracially $\mathcal{Z}$-absorbing C*-algebra $A$.
If $\alpha$ has the weak tracial Rokhlin
property, then $C^{*} (G, A, \alpha)$ and $A^{\alpha}$ are also  simple
tracially $\mathcal{Z}$-absorbing C*-algebras.
\end{theorem}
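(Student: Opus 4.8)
The plan is to prove the two assertions---simplicity and tracial $\mathcal{Z}$-absorption---separately, exploiting throughout the identification of $A^{\alpha}$ with a corner of the crossed product. Write $B = C^{*}(G,A,\alpha)$ and $p = \tfrac{1}{\card(G)} \sum_{g \in G} u_{g} \in M(B)$; then $p$ is a projection, $b \mapsto bp = pbp$ is an isomorphism $A^{\alpha} \cong pBp$, and so $A^{\alpha}$ is a nonzero hereditary subalgebra of $B$. For simplicity I would first note that the weak tracial Rokhlin property forces $\alpha$ to be pointwise outer: were some $\alpha_{g}$ with $g$ nontrivial inner, the almost invariance $\alpha_{g}(f_{h}) \approx f_{gh}$ of the Rokhlin elements, together with $f_{gh} \perp f_{h}$ and the outerness estimate \eqref{WTRP-OUT-CO} of Definition~\ref{defwtrp}, would be contradictory (see \cite{FG17}). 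Pointwise outerness of a finite group action on a simple C*-algebra makes $B$ simple by the standard theory of crossed products, and then $A^{\alpha} \cong pBp$, a nonzero hereditary subalgebra of the simple algebra $B$, is simple too.

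For tracial $\mathcal{Z}$-absorption of $A^{\alpha}$ I would use the equivariant strengthening of Lemma~\ref{lem_fg} noted in the remark after it: averaging over $G$ and reapplying \cite[Proposition~2.5]{KW04}, one may take $\alpha_{g}(\psi(z)) = \psi(z)$, so that $\psi$ lands in $A^{\alpha}$. Given a positive contraction $x \in A^{\alpha}$, a nonzero $a \in (A^{\alpha})_{+}$, a finite $F \subseteq A^{\alpha}$, and $n \in \mathbb{N}$, Lemma~\ref{lem_fg} (with target $a$) yields such a $\psi \colon M_{n} \to A^{\alpha}$ with $\bigl(x^{2} - x\psi(1)x - \varepsilon\bigr)_{+} \precsim a$ in $A$ and $\|[\psi(z),y]\| < \varepsilon$ for $y \in F$. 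The one subtlety is that Definition~\ref{deftza} for $A^{\alpha}$ asks for the Cuntz comparison \emph{inside} $A^{\alpha}$; I would supply this from the corner picture, since for positive elements of $A \subseteq B$ Cuntz subequivalence passes up to $B$, and for positive elements of the hereditary subalgebra $A^{\alpha} = pBp$ it then passes back down. As $(x^{2} - x\psi(1)x - \varepsilon)_{+}$ and $a$ lie in $A^{\alpha}$, this gives the comparison in $A^{\alpha}$, and with $A^{\alpha} \not\cong \mathbb{C}$ (a nonzero corner of an infinite-dimensional simple algebra) Definition~\ref{deftza} is verified.

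The substantive case is $B$ itself, where Definition~\ref{deftza} must be checked for an \emph{arbitrary} positive $X \in B$, not one coming from $A^{\alpha}$. A $G$-invariant approximate identity of $A$ is an approximate identity of $B$ contained in $A^{\alpha}$, so I may choose a positive contraction $h \in A^{\alpha}$ with $hXh \approx X$, and I place $h$ together with the $A$-coefficients of approximations $\sum_{g} a_{g} u_{g}$ of the elements to be commuted with into a finite set $F \subseteq A$. Using simplicity of $B$ and that $A^{\alpha} = pBp$ is hereditary, I pick a nonzero $a \in (A^{\alpha})_{+} \subseteq A_{+}$ with $a \precsim a'$ for the given nonzero $a' \in B_{+}$, and then apply Lemma~\ref{lem_fg} with $x = h$, this $a$ and $F$, and suitably small parameters, getting $\psi \colon M_{n} \to A^{\alpha}$. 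Condition~\eqref{deftza-it2} for $B$ then holds because $\psi(z) \in A^{\alpha}$ commutes with each $u_{g}$ while Lemma~\ref{lem_fg}\ref{TZS_WTRP_2} gives approximate commutation with the $a_{g}$. For Condition~\eqref{deftza-it1}, put $e = \psi(1)$ and $w = h(1-e)h = h^{2} - heh \in A^{\alpha}$; then
\[
X^{2} - XeX = X(1-e)X \approx hXh(1-e)hXh = (w^{1/2}Xh)^{*}(w^{1/2}Xh) \sim w^{1/2}Xh^{2}Xw^{1/2} \le \|X\|^{2} w,
\]
whence, after adjusting constants, $\bigl(X^{2} - XeX - \varepsilon\bigr)_{+} \precsim (w - \delta)_{+}$; and Lemma~\ref{lem_fg}\ref{TZS_WTRP_1} with $x = h$ reads precisely $(h^{2} - heh - \delta)_{+} = (w - \delta)_{+} \precsim a \precsim a'$. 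This verifies Definition~\ref{deftza} for $B$.

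The main obstacle is exactly this reduction for $B$: Definition~\ref{deftza} ranges over all positive $X$ in the crossed product, whereas Lemma~\ref{lem_fg} controls only invariant $x \in A^{\alpha}$. The device that resolves it is to absorb $X$ into a local unit $h \in A^{\alpha}$ and to recognize the sandwiched defect $h(1-e)h$ as the exact output $h^{2} - heh$ of Lemma~\ref{lem_fg} applied with $x = h$; the identity $(w^{1/2}Xh)^{*}(w^{1/2}Xh) \sim w^{1/2}Xh^{2}Xw^{1/2} \le \|X\|^{2} w$ is what eliminates the non-invariant factor $X$. The remaining care goes into the bookkeeping of the small parameters and into the transfer of Cuntz comparison among $A$, $B$, and the corner $A^{\alpha}$, all of which is routine once the pointwise outerness from \cite{FG17} has secured simplicity of $B$.
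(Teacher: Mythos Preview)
Your proposal is correct and follows essentially the same route as the paper: simplicity via pointwise outerness (\cite{FG17}) and Kishimoto's theorem, then Lemma~\ref{lem_fg} applied after reducing the test element to $A^{\alpha}$ via an invariant approximate identity. The paper packages two of your explicit computations as black-box citations---\cite[Remark~3.8]{AGJP21} replaces your $X \mapsto h$ reduction with the sandwich estimate $X(1-e)X \approx (Xh)^{*}w(Xh)$, and \cite[Theorem~4.1]{AGJP21} (hereditary permanence of tracial $\mathcal{Z}$-absorption) handles $A^{\alpha}$ directly from $B$ without your separate verification and Cuntz-comparison transfer---and it uses the unaveraged Lemma~\ref{lem_fg} with condition~\ref{TZS_WTRP_3} (giving $\|[\psi(z),u_g]\|=\|\alpha_g(\psi(z))-\psi(z)\|<\varepsilon$) rather than your averaged version, but otherwise the arguments coincide. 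One minor point: for the crossed product you only need $a\in A_{+}$ with $a\precsim a'$ in $B$ (this is the nonunital \cite[Lemma~5.1]{HO13}, as the paper uses), not $a\in A^{\alpha}$.
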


\begin{proof}
By \cite[Proposition~3.2]{FG17},
$\alpha$ is pointwise outer.
So \cite[Theorem 3.1]{Ki81}
implies that $C^{*} (G, A, \alpha)$ is simple.
Since $A^{\alpha}$ is isomorphic
to a hereditary C*-subalgebra of $C^{*} (G, A, \alpha)$ \cite{Ro79},
it follows that $A^{\alpha}$ is also simple,
and by \cite[Theorem~4.1]{AGJP21}, it also follows that it is
enough to show that $C^{*} (G, A, \alpha)$ is
tracially $\mathcal{Z}$-absorbing.

To verify \cite[Definition~3.6]{AGJP21} for $C^{*} (G, A, \alpha)$,
let $F \subseteq C^{*} (G, A, \alpha)$ be a finite set,
let $x, a \in C^{*} (G, A, \alpha)_{+}$ with $a \neq0$,
let $\varepsilon > 0$, and let $n \in \mathbb{N}$.
The proof of \cite[Lemma~5.1]{HO13}
also works in the nonunital case,
and we can apply this generalization of it
to find $a_{0} \in A_{+} \setminus \{ 0 \}$
such that $a_{0} \precsim a$ in $C^{*} (G, A, \alpha)$.
Next,
let $(u_{i})_{i \in I}$ be an approximate identity for~$A$.
For $i \in I$ set
\[
v_{i}
 = \frac{1}{\card (G)} \sum_{g \in G} \alpha_{g} (u_{i})
 \in A^{\alpha}.
\]
Then $(v_{i})_{i \in I}$
is an approximate identity for $C^{*} (G, A, \alpha)$
which is contained in $A^{\alpha}$.
Therefore, by \cite[Remark~3.8]{AGJP21},
we may assume that $x \in A^{\alpha}$.

For $g \in G$ let $u_g \in M ( C^{*} (G, A, \alpha) )$
be the standard unitary
associated with the crossed product.
For $y \in F$ write
$y = \sum_{g \in G} c_{y, g} u_g$ with $c_{y, g} \in A$ for
$g \in G$.
Define
$E = \big\{ c_{y, g} \colon {\mbox{$y \in F$ and $g \in G$}} \big\}$.
Apply Lemma~\ref{lem_fg} with $E$ in place of $F$,
with $a_{0}$ in place of $a$, with $x$ as given,
and with $\varepsilon / [2 \card (G) ]$ in place of $\varepsilon$.
We obtain a c.p.c.~order zero map $\psi_0 \colon M_{n} \to A$.
Let $\psi \colon A \to C^{*} (G, A, \alpha)$ be its composition
with the inclusion of $A$ in $C^{*} (G, A, \alpha)$.
We claim that $\psi$ satisfies
the conditions of \cite[Definition~3.6]{AGJP21},
for $C^{*} (G, A, \alpha)$.
Condition~(1) is clear.
For Condition~(2),
let $z \in M_n$ satisfy $\| z \| \leq 1$ and let $y \in F$.
Then
\begin{align*}
\| [ \psi (z), \, y ] \|
& \leq \sum_{g \in G} \| [ \psi (z), \, c_{y, g} u_g ] \|
\\
& \leq \sum_{g \in G} \| [ \psi (z), \, c_{y, g} ] \|
      + \sum_{g \in G} \| [ \psi (z), \, u_g ] \|
\\
&  < \card (G) \left( \frac{\ep}{2 \card (G)} \right)
       + \card (G) \left( \frac{\ep}{2 \card (G)} \right)
   = \ep.
\end{align*}
This completes the proof.
\end{proof}

\begin{remark}\label{rmk_fg}
The proofs of Lemma~\ref{lem_fg} and Theorem~\ref{thm_fg}
work
if moreover $A$ is unital and
$\alpha$ has the generalized tracial Rokhlin property
(\cite[Definition~5.2]{HO13})
instead of having the weak tracial Rokhlin property.
This is because Condition~\eqref{WTRP-OUT-CO}
in Definition~\ref{defwtrp} is not used in the proof of
Lemma~\ref{lem_fg}.

The point is that the definitions are written
so that any purely infinite simple C*-algebra
is tracially $\mathcal{Z}$-absorbing,
but not every action of a finite group
on a purely infinite simple C*-algebra
has the weak tracial Rokhlin property.
The generalized tracial Rokhlin property
of~\cite{HO13}
is strong enough to imply that the crossed product is simple.
By \cite[Theorem 4.5]{JngOsk},
the crossed product of a purely infinite simple unital C*-algebra
by a pointwise outer action of a finite group
is always purely infinite and simple.
In fact,
even if the action isn't pointwise outer,
the crossed product is a finite direct sum
of purely infinite simple C*-algebras,
so is purely infinite in the sense of
\cite[Definition 4.1]{KR00}.

We thus obtain an alternative proof for
Theorem~5.6 of \cite{HO13} which avoids dimension functions.
In particular,  the assumption of separability in
Theorem~5.6 of \cite{HO13}
is unnecessary.
(This assumption
is implicit in the statement of that result
because the authors use their Lemma~5.5.)
\end{remark}

\begin{corollary}\label{Cor_intermediate}
Let  $\alpha \colon G \to \mathrm{Aut} (A)$ be an
action of a finite group $G$ on a 
simple $\sigma$-unital  tracially $\mathcal{Z}$-absorbing 
 C*-algebra $A$.
If $\alpha$ has the weak tracial Rokhlin property 
then all intermediate C*-algebras $B$ and $D$ with 
\[
A^{\alpha}  \subseteq B \subseteq A \subseteq D \subseteq C^*(G, A,\alpha)
\]
are   simple and
tracially $\mathcal{Z}$-absorbing.
\end{corollary}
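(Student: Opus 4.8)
The plan is to reduce the statement to Theorem~\ref{thm_fg} applied to the restrictions of $\alpha$ to subgroups of $G$, using Izumi's Galois correspondence to enumerate the intermediate C*-algebras. First I would invoke \cite[Proposition~3.2]{FG17} to note that $\alpha$ is pointwise outer, so that Izumi's Galois correspondence \cite[Corollary~6.6]{Izumi2002} applies; here the $\sigma$-unitality hypothesis guarantees that the correspondence is available. It identifies the intermediate C*-algebras $A^\alpha \subseteq B \subseteq A$ as precisely the fixed point algebras $B = A^{\alpha|_H}$, and the intermediate C*-algebras $A \subseteq D \subseteq C^*(G, A,\alpha)$ as precisely the crossed products $D = C^*(H, A, \alpha|_H)$, as $H$ ranges over the subgroups of $G$. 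Thus it suffices to show that for every subgroup $H \leq G$, both $A^{\alpha|_H}$ and $C^*(H, A, \alpha|_H)$ are simple and tracially $\mathcal{Z}$-absorbing.

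The key step, and the one I expect to be the main obstacle, is to verify that the restricted action $\alpha|_H \colon H \to \Aut (A)$ again has the weak tracial Rokhlin property. Once this is established, Theorem~\ref{thm_fg} applied to $\alpha|_H$ (with $A$ simple and tracially $\mathcal{Z}$-absorbing) immediately yields that $C^*(H, A, \alpha|_H)$ and $A^{\alpha|_H}$ are simple and tracially $\mathcal{Z}$-absorbing, completing the proof. To prove that $\alpha|_H$ inherits the property, I would fix a complete set $S$ of representatives for the right cosets $H \backslash G$, so that every $g \in G$ is uniquely of the form $g = h s$ with $h \in H$ and $s \in S$. Given $\varepsilon > 0$, a finite set $F \subseteq A$, and $x, y \in A_{+}$ with $\| x \| = 1$, I would apply Definition~\ref{defwtrp} for the action of the full group $G$ with tolerance $\varepsilon / \card (G)$, obtaining orthogonal positive contractions $(f_g)_{g \in G}$, and then set $g_h = \sum_{s \in S} f_{h s}$ for $h \in H$.

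The verification then runs as follows. Orthogonality of the $g_h$ for distinct $h \in H$ follows from the uniqueness of the decomposition $g = h s$ together with the mutual orthogonality of the $f_g$, and each $g_h$ is a positive contraction for the same reason. The crucial simplification is that $\sum_{h \in H} g_h = \sum_{g \in G} f_g = f$, so the two Cuntz-type conditions \eqref{WTRP-CU-EQ} and \eqref{WTRP-OUT-CO} of Definition~\ref{defwtrp} hold for the family $(g_h)_{h \in H}$ verbatim, with no loss. For the commutation condition \eqref{WTRP-COM} one estimates $\| g_h a - a g_h \| \leq \sum_{s \in S} \| f_{h s} a - a f_{h s} \| < \card (S) \cdot \varepsilon / \card (G) \leq \varepsilon$, and for the equivariance condition \eqref{WTRP-LTR} one uses $\alpha_h (g_{h'}) = \sum_{s \in S} \alpha_h (f_{h' s})$, which is within $\card (S) \cdot \varepsilon / \card (G) \leq \varepsilon$ of $\sum_{s \in S} f_{(h h') s} = g_{h h'}$. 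This establishes the weak tracial Rokhlin property for $\alpha|_H$, and the corollary follows.
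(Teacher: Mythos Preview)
Your proof is correct and follows essentially the same route as the paper: pointwise outerness via \cite[Proposition~3.2]{FG17}, Izumi's Galois correspondence, the weak tracial Rokhlin property for the restricted action, and Theorem~\ref{thm_fg}. The only difference is that where you prove the restriction step by hand (summing Rokhlin elements over coset representatives), the paper simply cites \cite[Proposition~4.1]{FG17}, whose proof is exactly the argument you wrote out.
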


\begin{proof}
Suppose that $B$ and $D$ are intermediate C*-algebras as above.
By \cite[Proposition~3.2]{FG17}, $\alpha$ is pointwise outer.
Then, \cite[Corollary~6.6(1)]{Izumi2002} implies that there is a subgroup $H$ of
$G$ such that  $D=  C^{*} (H, A, \beta)$
where $\beta \colon H \to \mathrm{Aut} (A)$ is the 
restriction of $\alpha$ to $H$.
By \cite[Proposition~4.1]{FG17}, $\beta$ has 
the weak tracial Rokhlin property. Now, 
Theorem~\ref{thm_fg} yields that $D$ is simple
and tracially $\mathcal{Z}$-absorbing.
The corresponding result about $B$ follows similarly using
the fact that $B$ is equal  to the fixed point
algebra of a restricted action $\beta \colon H \to \mathrm{Aut} (A)$
as above \cite[Corollary~6.6(2)]{Izumi2002}.
\end{proof}

\begin{corollary}\label{cor_fg_zs}
Let $\alpha \colon G \to \mathrm{Aut} (A)$ be an
action of a finite group $G$ on a
simple separable nuclear $\mathcal{Z}$-stable C*-algebra $A$.
If $\alpha$ has the weak tracial Rokhlin property,
then $C^{*} (G, A, \alpha)$ and $A^{\alpha}$ are
simple $\mathcal{Z}$-stable C*-algebras.
Moreover, all intermediate C*-algebras $B$ and $D$
as in Corollary~\ref{Cor_intermediate} are simple, nuclear, and
$\mathcal{Z}$-stable.
\end{corollary}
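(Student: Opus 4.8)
The plan is to reduce everything to the tracial results already established in this section, using the equivalence between $\mathcal{Z}$-stability and tracial $\mathcal{Z}$-absorption for simple separable nuclear C*-algebras \cite{HO13, CLS2021}. First I would invoke this equivalence in one direction: since $A$ is simple, separable, nuclear, and $\mathcal{Z}$-stable, it is tracially $\mathcal{Z}$-absorbing. Thus $A$ satisfies the standing hypotheses of Theorem~\ref{thm_fg} and, being separable, is in particular $\sigma$-unital, so it also satisfies the hypotheses of Corollary~\ref{Cor_intermediate}.

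Next I would apply these two results directly. Theorem~\ref{thm_fg} gives that $C^{*}(G, A, \alpha)$ and $A^{\alpha}$ are simple and tracially $\mathcal{Z}$-absorbing, and Corollary~\ref{Cor_intermediate} gives the same for every intermediate C*-algebra $B$ and $D$ with $A^{\alpha} \subseteq B \subseteq A \subseteq D \subseteq C^{*}(G, A, \alpha)$. It then remains only to promote tracial $\mathcal{Z}$-absorption to $\mathcal{Z}$-stability, for which I would verify separability and nuclearity of each algebra and then apply the equivalence \cite{HO13, CLS2021} in the reverse direction.

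For separability, $C^{*}(G, A, \alpha)$ is separable because $A$ is separable and $G$ is finite, and $A^{\alpha}$, $B$, and $D$ are all C*-subalgebras of separable C*-algebras, hence separable. For nuclearity, $C^{*}(G, A, \alpha)$ is nuclear since it is the crossed product of a nuclear C*-algebra by a finite (hence amenable) group; the fixed point algebra $A^{\alpha}$ is isomorphic to a hereditary C*-subalgebra of $C^{*}(G, A, \alpha)$ by \cite{Ro79}, and hereditary subalgebras of nuclear C*-algebras are nuclear. For the intermediate algebras, I would use the structural description from the proof of Corollary~\ref{Cor_intermediate}: by \cite[Corollary~6.6]{Izumi2002} there is a subgroup $H \leq G$ with $D = C^{*}(H, A, \beta)$ for the restricted action $\beta$, which is again nuclear by the same crossed-product argument, while $B$ is the fixed point algebra of such a restricted action and so is nuclear by the hereditary-subalgebra argument. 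With simplicity, separability, nuclearity, and tracial $\mathcal{Z}$-absorption in hand, the equivalence yields $\mathcal{Z}$-stability of all the algebras in question.

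I do not expect a genuine obstacle here: all the analytic content has already been absorbed into Theorem~\ref{thm_fg} and Corollary~\ref{Cor_intermediate}, and the present statement is essentially a bookkeeping argument combining those with the permanence of separability and nuclearity. The only point requiring slight care is that the equivalence of $\mathcal{Z}$-stability and tracial $\mathcal{Z}$-absorption must be applied to the possibly nonunital algebras $A^{\alpha}$ and $B$, so I would make sure to invoke the nonunital form of this equivalence \cite{CLS2021} rather than the unital version of \cite{HO13} alone.
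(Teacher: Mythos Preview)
Your proposal is correct and follows essentially the same strategy as the paper: pass from $\mathcal{Z}$-stability of $A$ to tracial $\mathcal{Z}$-absorption, apply Theorem~\ref{thm_fg} and Corollary~\ref{Cor_intermediate}, and then upgrade back to $\mathcal{Z}$-stability using separability, nuclearity, and \cite{CLS2021}. The one minor difference is in how $\mathcal{Z}$-stability of $A^{\alpha}$ is obtained: the paper argues that $A^{\alpha}$ is a full corner of $C^{*}(G,A,\alpha)$ \cite{Ro79} and then invokes preservation of $\mathcal{Z}$-stability under Morita equivalence for separable C*-algebras \cite[Corollary~3.2]{TW07}, whereas you establish nuclearity of $A^{\alpha}$ directly (as a hereditary subalgebra of a nuclear algebra) and apply \cite{CLS2021}. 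Both routes are valid; yours is arguably more uniform since it treats all the algebras in parallel, while the paper's avoids having to check nuclearity of $A^{\alpha}$ separately.
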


\begin{proof}
By Theorem~\ref{thm_fg},
  $C^{*} (G, A, \alpha)$ is  
simple and tracially $\mathcal{Z}$-absorbing. 
It is also nuclear since $A$ is nuclear and $G$ is amenable.
By the main result of \cite{CLS2021}, 
we see that $C^{*} (G, A, \alpha)$ is $\mathcal{Z}$-stable.
On the other hand,
$A^{\alpha}$ is isomorphic
to a full corner of $C^{*} (G, A, \alpha)$ \cite{Ro79},
and
by \cite[Corollary~3.2]{TW07},
$\mathcal{Z}$-absorption is preserved under Morita equivalence
in the class of separable C*-algebras.
Therefore $A^{\alpha}$ is also a
simple $\mathcal{Z}$-stable C*-algebra.

The second part about intermediate C*-algebras follows
from the first part and an argument similar to the proof
of Corollary~\ref{Cor_intermediate}.
\end{proof}

The following example can be thought of as
the analog of \cite[Example 5.10]{HO13}.
It is also a consequence of Theorem~\ref{T_8816_PermwTRP} below.

\begin{example}\label{E_8703_WShift}
Let $\mathcal{W}$ be the Razak-Jacelon algebra~\cite{Jcln},
that is, the unique simple Razak algebra~\cite{Rzk}
which has a unique tracial state and no unbounded traces.
Let $n \in \{ 2, 3, \ldots \}$,
and let $A = \mathcal{W}^{\otimes n}$,
the tensor product of $n$ copies of~$\mathcal{W}$.
We have $A \cong \mathcal{W}$ by \cite[Corollary 19.3]{GngLin}
and $A \cong A \otimes {\mathcal{Z}}$ by \cite[Corollary 6.3]{Jcln}.

Let $\af$ be the action of the symmetric group~$S_n$
on $A$ by permuting the tensor factors.
We claim that $\af$ has the weak tracial Rokhlin property.
First, let $\bt$ be the action of~$S_n$
on ${\mathcal{Z}}^{\otimes n}$ by permuting the tensor factors.
By \cite[Example 5.10]{HO13},
this action has the generalized Rokhlin paperty
of \cite[Definition 5.2]{HO13}.
By Proposition \ref{P_8615_WeakToGen}\ref{P_8615_WeakToGen_ToWk},
it has the weak tracial Rokhlin property.
Therefore the action $\af \otimes \bt$
of $S_n$ on
$A \otimes {\mathcal{Z}}^{\otimes n}
 = (\mathcal{W} \otimes {\mathcal{Z}})^{\otimes n}$
has the weak tracial Rokhlin property
by \cite[Proposition 4.5]{FG17}.
Using an isomorphism $\mathcal{W} \cong \mathcal{W} \otimes {\mathcal{Z}}$
(\cite[Corollary 6.3]{Jcln}),
one sees that $\af$ is conjugate to $\af \otimes \bt$,
and so has the weak tracial Rokhlin property.
Alternatively, it follows from Theorem~\ref{T_8816_PermwTRP} below 
that $\af$ has the weak tracial Rokhlin property.

It follows from Theorem~\ref{thm_fg}
that $C^* (S_n, \mathcal{W}^{\otimes n}, \af)$
is tracially $\mathcal{Z}$-absorbing.
\end{example}

We don't know whether the action in Example~\ref{E_8703_WShift}
has the Rokhlin property.
The corresponding action on~${\mathcal{Z}}$
does not have even any higher dimensional Rokhlin property
with commuting towers,
by \cite[Corollary 4.8]{HrsPh1}.
However,
the proof relies,
among other things, on nontriviality of $K_0 ( {\mathcal{Z}} )$,
while $K_* (\mathcal{W}) = 0$.

\begin{example}\label{exa_fg}
Let $A = \bigotimes_{k = 1} ^{\infty} M_{3}$
be the UHF~algebra of type $3^{\infty}$ and
let $B$ be a (not necessarily unital) simple C*-algebra.
Then $A \otimes B$ is $\mathcal{Z}$-stable,
so is tracially $\mathcal{Z}$-absorbing.
Consider the action
$\alpha \colon \mathbb{Z}_{2} \to \mathrm{Aut} (A)$ generated by
the automorphism
\[
\bigotimes_{k = 1}^{\infty}
  \mathrm{Ad}
   \begin{pmatrix}1&0&0 \cr 0&1&0 \cr 0&0& - 1 \end{pmatrix}
 \in \mathrm{Aut} (A),
\]
and let $\beta \colon \mathbb{Z}_{2} \to \mathrm{Aut} (B)$
be an arbitrary action.
The action $\alpha$ has the tracial Rokhlin property
(for unital C*-algebras and using projections)
by the condition in \cite[Proposition 2.5(3)]{PhT4}
(but not the Rokhlin property,
by the condition in \cite[Proposition 2.4(3)]{PhT4}).
By \cite[Proposition~4.5]{FG17}, the action
$\alpha \otimes \beta
 \colon \mathbb{Z}_{2} \to \mathrm{Aut} (A \otimes B)$
has the weak tracial Rokhlin property.
Now Theorem~\ref{thm_fg} implies that
$C^*(\mathbb{Z}_{2}, \, A \otimes B, \, \alpha \otimes \beta)$
is a simple tracially $\mathcal{Z}$-absorbing
C*-algebra.

As an example for $B$ and $\bt$,
let $n \in \{ 2, 3, \ldots, \infty \}$,
take $B$ to be the minimal tensor product of two copies
of a proper hereditary subalgebra of $C^*_{\mathrm{r}} (F_n)$,
and take $\bt$ to be generated by the (minimal) tensor flip.
\end{example}

A more interesting specific case
would be gotten by taking $B$ to be
the reduced free product of two copies of
the same nonunital C*-algebra,
and taking $\bt$ to be the free flip.
Criteria for simplicity of reduced free products
of unital C*-algebras are given in
the corollary to \cite[Proposition 3.1]{Avtz},
but unfortunately we do not know criteria
for simplicity of reduced free products
of nonunital C*-algebras.

\section{The permutation action on a finite tensor product}\label{Sec_Perm}

\indent
We will prove that if $A$
is a simple tracially $\mathcal{Z}$-absorbing C*-algebra,
then, under a finiteness condition,
the permutation action of the symmetric group
on the minimal tensor product of finitely many copies of~$A$
has the weak tracial Rokhlin property
of Definition~\ref{defwtrp}.
The basic idea is simple.
Fix $m \in \N$,
the number of tensor factors.
Suppose $n \in \N$ and $\ph \colon M_{n} \to A$
is a c.p.c.~order zero map.
Let $(e_{j, k})_{j, k = 1, 2, \ldots, m}$
be the standard system of matrix units for~$M_m$.
Then in the tensor product of $m$ copies of~$A$,
we can look at the elements
\[
\ph (e_{r (1), r (1)}) \otimes \ph (e_{r (2), r (2)})
 \otimes \cdots \otimes \ph (e_{r (m), r (m)})
\]
with
\[
r = (r (1), \, r (2), \, \ldots, \, r (m)) \in \{ 1, 2, \ldots n \}^m.
\]
The permutation action on the tensor product
translates into permutation of the indices
in this expression.
If $n$ is large,
then most elements $r \in \{ 1, 2, \ldots n \}^m$
have all coordinates distinct.
The action is free on the set of such~$r$,
and adding up one of these from each orbit
gives the elements required in the weak tracial Rokhlin property.

Some condition is presumably needed.
The way the definitions are written,
every purely infinite simple C*-algebra
is tracially $\mathcal{Z}$-absorbing,
but not every action on such a C*-algebra
has the weak tracial Rokhlin property.

The details are somewhat messy,
so we start with some lemmas.

\begin{notation}\label{N_8808_Setup}
For $m \in \N$
let $S_m$ denote the corresponding symmetric group,
the group of all permutations of $\{ 1, 2, \ldots m \}$.
Let $(e_{j, k})_{j, k = 1, 2, \ldots, m}$
be the standard system of matrix units for~$M_m$.
For a C*-algebra~$A$,
we let $A^{\otimes m}$ denote the minimal
tensor product of $m$ copies of~$A$,
and for $x \in A$ we write
$x^{\otimes m}$
for the $m$-fold tensor product
$x \otimes x \otimes \cdots \otimes x \in A^{\otimes m}$.
The permutation action $\bt$ of $S_m$ on $A^{\otimes m}$
is determined by
\[
\bt_{\sm} (a_1 \otimes a_2 \otimes \cdots \otimes a_m)
 = a_{\sm^{-1} (1)} \otimes a_{\sm^{-1} (2)}
    \otimes \cdots \otimes a_{\sm^{-1} (m)}
\]
for $a_1, a_2, \ldots, a_m \in A$
and $\sm \in S_m$.
(We suppress $A$ and $m$ in the notation.)
Finally, we recall \cite[Notation~7.3]{AGJP21}:
the Pedersen ideal of $A$ is $\Ped (A)$.
\end{notation}

\begin{lemma}\label{L_8825_yStarby}
Let $A$ be a C*-algebra,
let $b \in A_{+}$,
let $a \in \bigl( {\overline{A b A}} \bigr)_{+}$,
and let $\ep > 0$.
Then there are $n \in \N$ and
$x_1, x_2, \ldots, x_n \in A$
such that $(a - \ep)_{+} = \sum_{j = 1}^n x_j^* b x_j$.
\end{lemma}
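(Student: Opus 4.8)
The plan is to reduce the required exact identity $(a - \ep)_+ = \sum_j x_j^* b x_j$ to a Cuntz subequivalence $(a - \dt)_+ \precsim h$ in which the dominating element $h$ is \emph{already} of the desired quadratic form $\sum_i \hat t_i^* b \hat t_i$. Once that is arranged, the definition of $\precsim$ upgrades the subequivalence to an \emph{exact} equation against $h$, and then distributing the connecting element through the finite sum preserves the quadratic form. Throughout I would freely use the standard facts about Cuntz subequivalence recorded in \cite{Ph14, AGJP21}: that $\| a - c \| < \dt$ implies $(a - \dt)_+ \precsim c$, that $0 \le a' \le c'$ implies $a' \precsim c'$, transitivity of $\precsim$, and the definition of $\precsim$ via cut-downs.

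First I would observe that $\overline{A b A}$ is a C*-algebra and $a$ is a positive element of it, so $a^{1/2} \in \overline{A b A}$ as well; hence $a^{1/2}$ is a norm limit of finite sums from $A b A$, and we may fix $w = \sum_{i=1}^N s_i b t_i$ with $s_i, t_i \in A$ and $\| a^{1/2} - w \|$ so small that, by continuity of $c \mapsto c^* c$, we have $\| a - w^* w \| < \dt$ with $\dt < \ep$. The computational heart is to dominate $w^* w$ by an element of the correct form. Writing $b = b^{1/2} b^{1/2}$ and regrouping $w = \sum_i (s_i b^{1/2})(b^{1/2} t_i) = P Q$ as the product of a row $P = (s_1 b^{1/2}, \ldots, s_N b^{1/2}) \in M_{1, N}(A)$ and the column $Q \in M_{N, 1}(A)$ with entries $b^{1/2} t_i$, I would use $P^* P \le \| P \|^2 \, 1_N$ in $M_N(A^{\sim})$ to obtain
\[
w^* w = Q^* (P^* P) Q \le \| P \|^2 \, Q^* Q = \| P \|^2 \sum_{i=1}^N t_i^* b t_i =: h .
\]
Here $h = \sum_{i=1}^N \hat t_i^* b \hat t_i$ with $\hat t_i = \| P \| \, t_i$, so $h$ lies in the target form, and simultaneously $w^* w \le h$.

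Combining the pieces: $\| a - w^* w \| < \dt$ gives $(a - \dt)_+ \precsim w^* w$, while $w^* w \le h$ gives $w^* w \precsim h$, so by transitivity $(a - \dt)_+ \precsim h$. Applying the definition of $\precsim$ with cut-off parameter $\ep - \dt > 0$ (and using $\bigl( (a - \dt)_+ - (\ep - \dt) \bigr)_+ = (a - \ep)_+$ from functional calculus) produces $r \in A$ with $(a - \ep)_+ = r^* h r$. Expanding $h$ then yields
\[
(a - \ep)_+ = r^* \Bigl( \sum_{i=1}^N \hat t_i^* b \hat t_i \Bigr) r
 = \sum_{i=1}^N (\hat t_i r)^* b (\hat t_i r),
\]
which is the claim with $n = N$ and $x_i = \hat t_i r$.

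The main obstacle, and the point the argument is designed to circumvent, is that a naive approximation of $a$ (or $a^{1/2}$) by ideal elements $\sum_i s_i b t_i$ never directly lands in the quadratic form $\sum_j x_j^* b x_j$: squaring such an element produces two factors of $b$ together with an unwanted cross term $s_i^* s_j$ sitting between them. The estimate $P^* P \le \| P \|^2 \, 1_N$ is exactly what removes both difficulties at once, absorbing the cross term into a scalar and collapsing the two factors of $b$ into one, thereby manufacturing an element of the precise form $\sum_j x_j^* b x_j$ that still dominates $w^* w$. Passing from this domination to the exact identity is then routine through the definition of $\precsim$.
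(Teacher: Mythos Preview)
Your argument is correct. The paper's proof is shorter because it invokes \cite[Lemma~1.13]{Ph14} as a black box: that lemma already supplies elements $y_1,\ldots,y_n\in A$ with $\bigl\|a-\sum_j y_j^* b y_j\bigr\|<\ep$, i.e.\ the approximation is \emph{already} in the symmetric form $\sum_j y_j^* b y_j$. One then applies \cite[Lemma~2.1]{AGJP21} to get $d\in A$ with $(a-\ep)_+ = d^*\bigl(\sum_j y_j^* b y_j\bigr)d$, and sets $x_j=y_j d$.

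Your route differs in that you do not assume the approximant is symmetric. Instead you approximate $a^{1/2}$ by a generic ideal element $w=\sum_i s_i b t_i$, and then the matrix inequality $P^*P\le\|P\|^2\,1_N$ manufactures a symmetric dominator $h=\sum_i \hat t_i^{\,*} b \hat t_i$ with $w^*w\le h$. In effect you are reproving (a version of) \cite[Lemma~1.13]{Ph14} on the spot. This buys you a self-contained argument, at the cost of a few extra lines and a slightly more elaborate Cuntz-subequivalence step at the end (passing through $(a-\dt)_+\precsim h$ rather than a direct norm estimate against~$h$). Both finishes rest on the same R{\o}rdam-type fact underlying \cite[Lemma~2.1]{AGJP21}.
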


\begin{proof}
Use \cite[Lemma 1.13]{Ph14} to find
$y_1, y_2, \ldots, y_n \in A$
such that
\[
\Bigg\| a - \sum_{j = 1}^n y_j^* b y_j \Bigg\| < \ep.
\]
\cite[Lemma~2.1]{AGJP21}, provides $d \in A$ such that
\[
(a - \ep)_{+} = d^* \Bigg( \sum_{j = 1}^n y_j^* b y_j \Bigg) d.
\]
Set $x_j = y_j d$ for $j = 1, 2, \ldots, n$.
\end{proof}

\begin{lemma}\label{L_8825_DFSub}
Let $A$ be a simple tracially $\mathcal{Z}$-absorbing C*-algebra,
let $a \in A_{+}$,
let $b \in A_{+} \setminus \{ 0 \}$,
and let $\ep > 0$.
Then there is $n_0 \in \N$
such that whenever $\nu \in \N$
and $c \in A_{+}$
satisfy $n_0 \nu \langle c \rangle \leq \langle (a - \ep)_{+} \rangle$,
then
$\nu \langle c \rangle \leq \langle b \rangle$.
\end{lemma}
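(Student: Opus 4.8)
The plan is to reduce the statement to two ingredients: a finite decomposition of $(a - \ep)_+$ against $b$, and almost unperforation of the Cuntz semigroup of $A$. Throughout I write $W(A)$ for the Cuntz semigroup and use the identifications $\langle c \rangle \le \langle d \rangle \iff c \precsim d$ and $k \langle c \rangle = \langle \bigoplus_{j=1}^{k} c \rangle$.

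First I would exploit simplicity. Since $A$ is simple and $b \neq 0$, the closed ideal generated by $b$ is all of $A$, so $a \in \bigl( {\overline{A b A}} \bigr)_{+}$ automatically. Hence Lemma~\ref{L_8825_yStarby} applies and yields $N \in \N$ (which, by padding with trivial terms, I may take $\ge 1$) and $x_1, x_2, \ldots, x_N \in A$ with $(a - \ep)_+ = \sum_{j=1}^{N} x_j^* b x_j$. Because $x_j^* b x_j \precsim b$ for each $j$ (via $x_j^* b x_j = (b^{1/2} x_j)^*(b^{1/2} x_j) \sim (b^{1/2} x_j)(b^{1/2} x_j)^* = b^{1/2} x_j x_j^* b^{1/2} \precsim b$) and a finite sum of positive elements is Cuntz dominated by their orthogonal direct sum, this gives the key bound $\langle (a - \ep)_+ \rangle \le N \langle b \rangle$ in $W(A)$. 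I would then \emph{set} $n_0 = N + 1$; the extra unit of multiplicity is exactly the slack that makes almost unperforation applicable, and taking $n_0 = N$ would not suffice.

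Now suppose $\nu \in \N$ and $c \in A_{+}$ satisfy $n_0 \nu \langle c \rangle \le \langle (a - \ep)_+ \rangle$. Combining this with the previous bound gives $(N+1) \bigl( \nu \langle c \rangle \bigr) = n_0 \nu \langle c \rangle \le \langle (a - \ep)_+ \rangle \le N \langle b \rangle$, which is precisely the hypothesis of almost unperforation applied to $x = \nu \langle c \rangle$ and $y = \langle b \rangle$ with $k = N$ (namely $(k+1) x \le k y$). The one external input is that $W(A)$ is almost unperforated; since $A$ is tracially $\mathcal{Z}$-absorbing, this holds (see \cite{HO13, AGJP21}). Applying it yields $\nu \langle c \rangle \le \langle b \rangle$, as required.

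The main obstacle — indeed essentially the only nontrivial point — is invoking almost unperforation in the correct form and with the correct multiplicity bookkeeping; once the decomposition from Lemma~\ref{L_8825_yStarby} is in hand, everything else is manipulation of the additive order on $W(A)$. The degenerate case $a = 0$ (so $(a - \ep)_+ = 0$) is handled trivially, since then the hypothesis forces $\nu \langle c \rangle = 0 \le \langle b \rangle$, and the assumption $N \ge 1$ keeps $k = N$ a legitimate value for the almost unperforation inequality.
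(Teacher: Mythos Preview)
Your proof is correct and follows essentially the same route as the paper's: use Lemma~\ref{L_8825_yStarby} to get $(a-\ep)_+ = \sum_{j=1}^{N} x_j^* b x_j$, deduce $\langle (a-\ep)_+\rangle \le N\langle b\rangle$, set $n_0 = N+1$, and finish with almost unperforation of $W(A)$ (which the paper cites as \cite[Theorem~6.4]{AGJP21}). Your added remarks about simplicity guaranteeing $a \in (\overline{AbA})_+$ and about the degenerate case $a=0$ are accurate and slightly more explicit than the paper's version.
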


One can prove this lemma using dimension functions:
since $(a - \ep)_{+} \in \Ped (A)$,
the infimum of $d ((a - \ep)_{+})$ over suitably
normalized dimension functions will be strictly positive,
and we can choose $\nu$ to be greater than the reciprocal
of this infimum.
However,
the result can be gotten directly from
almost unperforation.

\begin{proof}[Proof of Lemma~\ref{L_8825_DFSub}]
Lemma~\ref{L_8825_yStarby}
provides $n \in \N$ and
$x_1, x_2, \ldots, x_n \in A$
such that $(a - \ep)_{+} = \sum_{j = 1}^n x_j^* b x_j$.
Then, in $M_n (A)$,
\begin{align*}
(a - \ep)_{+}
& \precsim
  \diag ( x_1^* b x_1, \, x_2^* b x_2, \, \ldots, \, x_n^* y x_n )
\\
& \sim \diag \bigl( b^{1/2} x_1 x_1^* b^{1/2},
     \, b^{1/2} x_2 x_2^* b^{1/2}, \, \ldots,
     \, b^{1/2} x_n x_n^* b^{1/2} \bigr)
  \precsim 1_n \otimes b.
\end{align*}

Set $n_0 = n + 1$.
If $n_0 \nu \langle c \rangle \leq \langle (a - \ep)_{+} \rangle$,
then $(n + 1) \nu \langle c \rangle \leq n \langle b \rangle$,
so $\nu \langle c \rangle \leq \langle b \rangle$
by \cite[Theorem~6.4]{AGJP21}.
\end{proof}

\begin{lemma}\label{L_8808_CommEqEst}
For every $\ep > 0$ there is $\dt > 0$ such that the following holds.
Let $n \in \N$,
let $A$ be a C*-algebra,
let $x \in A$ satisfy $\| x \| \leq 1$,
and let $\varphi \colon M_{n} \to A$ be a c.p.c.~order zero map
such that
$\| [\varphi (z), x] \| < \dt$
for any $z \in M_{n}$ with $\| z \| \leq 1$.
Then for every $j, k \in \{ 1, 2, \ldots, n \}$
there is $v \in A$ such that $\| v \| \leq 1$
and
$\bigl\| v^* x^* \ph (e_{j, j}) x v - x^* \ph (e_{k, k}) x \bigr\|
 < \ep$.
\end{lemma}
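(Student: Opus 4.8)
The plan is to use the structure theory of completely positive order zero maps together with the fact that $x$ approximately commutes with $h := \ph(1)$, and to obtain $v$ as a suitably normalized version of $\ph(e_{j,k})$. By the structure theorem for c.p.c.\ order zero maps there are a positive contraction $h = \ph(1) \in A$ and a homomorphism $\pi \colon M_n \to M\bigl(C^*(\ph(M_n))\bigr)$ with $h$ central and $\ph(z) = h\pi(z)$; writing $w_{a,b} = \pi(e_{a,b})$, this yields the product formula $\ph(e_{a,b})\ph(e_{c,d}) = \dt_{b,c}\, h\, \ph(e_{a,d})$, and in particular $\ph(e_{k,j})\ph(e_{j,j})\ph(e_{j,k}) = h^2 \ph(e_{k,k})$. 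The hypothesis $\|[\ph(z),x]\| < \dt$ for $\|z\|\le 1$ gives $\|[\ph(e_{a,b}),x]\| < \dt$ for the four relevant matrix units and, by summing over the diagonal, $\|[h,x]\| < \dt$.

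The naive candidate $v = \ph(e_{j,k})$ conjugates $x^*\ph(e_{j,j})x$ to approximately $x^* h^2 \ph(e_{k,k}) x$, and the extra factor $h^2$ is the main obstacle, since $h$ need not be close to~$1$. To cancel it I would fix a small $\sm > 0$ (depending only on $\ep$) and the continuous cutoff function $g(t) = 1/\max(t,\sm)$, and set
\[
v = \ph(e_{j,k})\, g(h) = e(h)\, w_{j,k}, \qquad \text{where } e(t) = t\,g(t) = \min(1, t/\sm).
\]
Since $g(h) \in A^{\sim}$ and $\ph(e_{j,k}) \in A$ we have $v \in A$, and $v^* v = e(h)^2 w_{k,k} \le 1$ shows $\|v\| \le 1$.

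In the exact commuting case (where $x$ commutes with every $\ph(e_{a,b})$ and with the central element $g(h)$) one computes, using the product formula and moving the functions of $h$ to the outside,
\[
v^* x^* \ph(e_{j,j})\, x\, v = x^*\, g(h)^2\, \ph(e_{k,j})\ph(e_{j,j})\ph(e_{j,k})\, x = x^*\, e(h)^2 h\, w_{k,k}\, x .
\]
The elementary estimate $\|e(h)^2 h - h\| \le \sm$ (a functional-calculus bound on $[0,1]$) then gives $\|v^* x^*\ph(e_{j,j})x\,v - x^*\ph(e_{k,k})x\| \le \sm < \ep$, so $\sm \le \ep/2$ already settles the commuting case.

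Finally I would treat the genuine (approximately commuting) case by tracking the finitely many commutators incurred when moving $x$ and $x^*$ past the factors $\ph(e_{k,j})$, $\ph(e_{j,k})$, and $g(h)$. The commutators with the matrix units are bounded directly by $\dt$; for the commutator with $g(h)$ I would invoke a functional-calculus estimate such as \cite[Lemma~2.5]{AP16} (or \cite[Lemma~2.5.11(2)]{LnBook}) which, for the now-fixed continuous function~$g$, yields $\|[g(h),x]\|$ arbitrarily small once $\|[h,x]\|$ is small enough. Since the number of commutator moves is an absolute constant and all remaining factors are contractions, choosing $\dt$ small enough makes the total perturbation error less than $\ep - \sm$. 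The only delicate point is that $\dt$ must be allowed to depend on $g$, i.e.\ on $\sm$; this is harmless, because $\sm$ is determined by $\ep$ alone and none of the estimates depend on $n$, $A$, $x$, or~$\ph$, so the resulting $\dt$ depends only on~$\ep$.
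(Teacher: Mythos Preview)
Your approach is correct and is essentially the paper's argument in different clothing: both take $v$ to be a cutoff of $\ph(e_{j,k})$, i.e.\ $v = f(h)\,w_{j,k}$ for a continuous function $f$ on $[0,1]$ with $f(0)=0$ and $|t f(t)^2 - t|$ small, so that $v^*\ph(e_{j,j})v \approx \ph(e_{k,k})$; the paper writes this as $v = \ps^{+}(f\otimes e_{j,k})$ via the cone picture and obtains $\|[v,x]\|$ small in one stroke from finite generation of $(CM_2)^{+}$, whereas you factor $v = \ph(e_{j,k})\,g(h)$ and bound each commutator separately, which forces you to track the norm $\|g(h)\|\le 1/\sm$ when estimating $[\ph(e_{j,k}),x]\,g(h)$.

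One small correction: your phrase ``by summing over the diagonal, $\|[h,x]\|<\dt$'' would actually give $n\dt$ and spoil the uniformity in~$n$. The right (and simpler) reason is that $z=1_{M_n}$ has $\|z\|=1$, so the hypothesis applies directly to give $\|[h,x]\|=\|[\ph(1),x]\|<\dt$.
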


\begin{proof}
Define $f \colon [0, 1] \to [0, 1]$ by
\[
f (\ld)
 = \begin{cases}
   0                            &
                   \hspace*{1em} 0 \leq \ld \leq \frac{\ep}{8}
        \\
   8 \ep^{-1} \ld - 1           &
                   \hspace*{1em} \frac{\ep}{8} < \ld < \frac{\ep}{4}
       \\
   1                            &
                   \hspace*{1em} \frac{\ep}{4} \leq \ld \leq 1.
\end{cases}
\]
Then $f \in C_0 ( (0, 1] )$.
Further let $t \in C_0 ( (0, 1] )$ be the function
$t (\ld) = \ld$ for $\ld \in (0, 1]$.
By \cite[Lemma 2.5]{ABP16},
the unitized cone $(C M_2)^{+}$
is generated by $1$ and the elements
$t \otimes e_{j, k}$ for $j, k = 1, 2$.
(There is a misprint in \cite[Lemma 2.5]{ABP16}:
the word ``unital'' is missing.)
Therefore there is $\dt > 0$ such that whenever $A$ is a C*-algebra,
$x \in A$ satisfies $\| x \| \leq 1$,
$\ps \colon (C M_2)^{+} \to A^{+}$
is a unital homomorphism,
and $\| [x, \, \ps ( t \otimes e_{j, k} ) ] \| < \dt$
for $j, k = 1, 2$,
then
$\| [x, \, \ps ( f \otimes e_{1, 2} ) ] \| < \frac{\ep}{4}$.

Now let $n$, $A$, $x$, $\ph$, $j$, and~$k$ be as in the hypotheses.
There is nothing to prove if $j = k$,
so assume $j \neq k$.
Let $\ps \colon C M_n \to A$ be the corresponding homomorphism
(\cite[Corollary~4.1]{WZ09}),
satisfying $\ps (t \otimes z) = \ph (z)$ for $z \in M_n$,
and let $\ps^{+} \colon (C M_n)^{+} \to A^{+}$ be its unitization.
Then
$\| [x, \, \ps^{+} ( t \otimes e_{j, k} ) ] \| < \dt$
for $j, k = 1, 2, \ldots, n$.
By considering the cone over the embedding of $M_2$ in $M_n$
determined by
\[
e_{1, 1} \mapsto e_{j, j},
\qquad
e_{1, 2} \mapsto e_{j, k},
\qquad
e_{2, 1} \mapsto e_{k, j},
\andeqn
e_{2, 2} \mapsto e_{k, k},
\]
we see that $v = \ps^{+} ( f \otimes e_{j, k} )$
satisfies
$\| [x, v] \| < \frac{\ep}{4}$.
Also clearly $\| v \| \leq 1$.

One checks that
\[
(f \otimes e_{j, k})^* (t \otimes e_{j, j}) (f \otimes e_{j, k})
 = t f^2 \otimes e_{k, k}
\]
and
$| \ld f (\ld)^2 - \ld | \leq \frac{\ep}{2}$
for all $\ld \in [0, 1]$,
so
$\| v^* \ph (e_{j, j}) v - \ph (e_{k, k}) \| \leq \frac{\ep}{2}$.
Now
\begin{align*}
\bigl\| v^* x^* \ph (e_{j, j}) x v - x^* \ph (e_{k, k}) x \bigr\|
& \leq \| [ v^*, x^* ] \| \cdot \| \ph (e_{j, j}) x v \|
        + \| x^*v^* \ph (e_{j, j}) \|\cdot \| [v, x] \|
\\
& \hspace*{3em} {\mbox{}}
        + \| x^* \|\cdot \| v^* \ph (e_{j, j}) v - \ph (e_{k, k}) \|\cdot \| x \|
\\
& < \frac{\ep}{4} + \frac{\ep}{4} + \frac{\ep}{2}
  = \ep,
\end{align*}
as desired.
\end{proof}

\begin{lemma}\label{L_8808_EltsSmall}
Let $A$ be a simple tracially $\mathcal{Z}$-absorbing C*-algebra.
Then
for every $x, a \in A_{+}$ with $a \neq 0$
and every $\varepsilon > 0$,
there is $n_0 \in \N$
such that for all $\nu \in \mathbb{N}$, all $\rh > 0$,
and every finite set $F \subseteq A$,
there is a c.p.c.~order zero map $\varphi \colon M_{n_0 \nu} \to A$
such that:
\begin{enumerate}
\item\label{deftza-it1_Small}
$\bigl( x^{2} - x \varphi (1) x - \varepsilon \bigr)_{+} \precsim a$.
\item\label{deftza-it2_Small}
$\| [\varphi (z), b] \| < \rh$
for any $z \in M_{n_0 \nu}$ with $\| z \| \leq 1$ and any $b \in F$.
\item\label{deftza-it3_Small}
$\nu \bigl\langle ( x \ph (e_{1, 1}) x - \varepsilon )_{+} \bigr\rangle
 \leq \langle a \rangle$
in $W (A)$.
\end{enumerate}
\end{lemma}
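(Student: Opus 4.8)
The plan is to read off conditions \ref{deftza-it1_Small} and \ref{deftza-it2_Small} directly from the definition of tracial $\mathcal{Z}$-absorption (Definition~\ref{deftza}), and to extract the size control \ref{deftza-it3_Small} from the almost-unperforation packaged in Lemma~\ref{L_8825_DFSub}, using Lemma~\ref{L_8808_CommEqEst} to compare the diagonal pieces. First I would fix, once and for all, the integer $n_0 \in \N$ produced by Lemma~\ref{L_8825_DFSub} applied with $b = a$, with $x^2$ in place of its ``$a$'', and with $\varepsilon_0 = \varepsilon / 4$; thus $n_0$ depends only on $x$, $a$, and~$\varepsilon$, as the order of quantifiers requires. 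Given $\nu$, $\rho$, and~$F$, set $N = n_0 \nu$ and apply Definition~\ref{deftza} with matrix size~$N$, with the finite set $F \cup \{ x \}$, and with a tolerance $\eta$ much smaller than $\rho$ and than $\varepsilon / N^2$; this yields a c.p.c.~order zero map $\varphi \colon M_{N} \to A$ for which \ref{deftza-it1_Small} and \ref{deftza-it2_Small} hold immediately, and for which $\| [\varphi (z), x] \|$ is as small as we like.

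Write $p_j = \varphi (e_{j, j})$ and $B_j = x p_j x$. The $p_j$ are pairwise orthogonal because $\varphi$ is order zero. Using \cite[Lemma~2.5]{AP16} to pass from $\| [\varphi (e_{j,j}), x] \|$ small to $\| [ x, p_j^{1/2} ] \|$ and $\| [ x^2, p_j^{1/2} ] \|$ small, I would form the column $w = \sum_{j = 1}^{N} p_j^{1/2} x \otimes e_{j, 1} \in M_N (A)$ and compute $w^* w = x \varphi (1) x \otimes e_{1, 1}$ and $w w^* = \bigl( p_i^{1/2} x^2 p_j^{1/2} \bigr)_{i, j}$. The off-diagonal entries satisfy $p_i^{1/2} x^2 p_j^{1/2} \approx p_i^{1/2} p_j^{1/2} x^2 = 0$, so by choosing $\eta$ small I can guarantee $\| w w^* - \diag (w w^*) \| < \gamma$ with $\gamma = \varepsilon / 4$. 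Since $y^* y \sim y y^*$ and $(y^* y - s)_+ \sim (y y^* - s)_+$ hold for every $y$, the diagonal blocks give $(p_j^{1/2} x^2 p_j^{1/2} - s)_+ \sim (B_j - s)_+$, and together with the standard perturbation estimate ``$\| c - d \| < \gamma \Rightarrow (c - 2\gamma)_+ \precsim (d - \gamma)_+$'' and $w w^* \sim w^* w$ this yields
\[
\bigoplus_{j = 1}^{N} (B_j - 2 \gamma)_+
 \sim \bigl( \diag (w w^*) - 2 \gamma \bigr)_+
 \precsim (w^* w - \gamma)_+
 = (x \varphi (1) x - \gamma)_+
 \precsim (x^2 - \gamma)_+,
\]
where the last step uses $x \varphi (1) x \leq x^2$ and monotonicity of $(\,\cdot - \gamma)_+$ under Cuntz comparison.

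Next I would compare every piece to the first. Applying Lemma~\ref{L_8808_CommEqEst} with tolerance $\delta_0 = \varepsilon - 2 \gamma = \varepsilon / 2$ produces, for each~$j$, a contraction $v$ with $\| v^* B_j v - B_1 \| < \delta_0$; combining the perturbation estimate with $(v^* B_j v - 2\gamma)_+ \sim (B_j^{1/2} v v^* B_j^{1/2} - 2\gamma)_+ \precsim (B_j - 2\gamma)_+$ (using $v v^* \leq 1$) gives $(B_1 - \varepsilon)_+ \precsim (B_j - 2 \gamma)_+$ for every~$j$. Writing $c = (B_1 - \varepsilon)_+ = (x \varphi (e_{1, 1}) x - \varepsilon)_+$ and adding over~$j$, I obtain in $W (A)$
\[
n_0 \nu \langle c \rangle
 = \sum_{j = 1}^{N} \langle c \rangle
 \leq \sum_{j = 1}^{N} \langle (B_j - 2 \gamma)_+ \rangle
 = \Bigl\langle \bigoplus_{j = 1}^{N} (B_j - 2 \gamma)_+ \Bigr\rangle
 \leq \bigl\langle (x^2 - \gamma)_+ \bigr\rangle
 = \bigl\langle (x^2 - \varepsilon_0)_+ \bigr\rangle .
\]
By the defining property of the $n_0$ chosen from Lemma~\ref{L_8825_DFSub}, this forces $\nu \langle c \rangle \leq \langle a \rangle$, which is exactly~\ref{deftza-it3_Small}.

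I expect the main obstacle to be the bookkeeping of the cutdown constants: the three places where a $(\,\cdot - s)_+$ appears---the near-diagonality of $w w^*$, the equivalence of the first piece with the others via Lemma~\ref{L_8808_CommEqEst}, and the final appeal to Lemma~\ref{L_8825_DFSub}---must be threaded so that the same $2\gamma$ and $\gamma = \varepsilon_0$ line up, and so that $\eta$ is chosen after $N = n_0 \nu$ is known while $\gamma$, $\delta_0$, and~$\varepsilon_0$ are pinned down in terms of the fixed~$\varepsilon$ before $n_0$ is selected. Everything else is a routine application of the standard Cuntz-comparison estimates $y^* y \sim y y^*$, $(y^* y - s)_+ \sim (y y^* - s)_+$, $P \leq Q \Rightarrow (P - s)_+ \precsim (Q - s)_+$, and the $\varepsilon$-perturbation lemma, together with the identity $\bigl\langle \bigoplus_j b_j \bigr\rangle = \sum_j \langle b_j \rangle$ in the Cuntz semigroup.
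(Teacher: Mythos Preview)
Your approach is correct and is essentially the paper's own argument: obtain $n_0$ from Lemma~\ref{L_8825_DFSub}, apply tracial $\mathcal{Z}$-absorption with matrix size $n_0\nu$, use Lemma~\ref{L_8808_CommEqEst} to compare $(x\varphi(e_{1,1})x-\varepsilon)_+$ with each $(x\varphi(e_{k,k})x-\cdot)_+$, and sum using orthogonality of the $\varphi(e_{k,k})$ before invoking Lemma~\ref{L_8825_DFSub}. The only cosmetic difference is that the paper replaces $x$ by $x_0=(x-\varepsilon/9)_+$ at the outset, so that the final upper bound is $\langle x_0\rangle=\langle (x-\varepsilon/9)_+\rangle$ on the nose; you instead aim for $\langle (x^2-\gamma)_+\rangle$.

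One caution on your last inequality $(x\varphi(1)x-\gamma)_+\precsim (x^2-\gamma)_+$: the phrase ``monotonicity of $(\,\cdot-\gamma)_+$ under Cuntz comparison'' suggests the general implication $P\leq Q\Rightarrow (P-\gamma)_+\precsim (Q-\gamma)_+$, which is \emph{not} a standard fact. What you actually need here (and in the step $(B_j^{1/2}vv^*B_j^{1/2}-2\gamma)_+\precsim (B_j-2\gamma)_+$) is the special case handled by \cite[Lemma~2.3]{AGJP21}: for $\|w\|\leq 1$ one has $(w^*aw-\gamma)_+\precsim (a-\gamma)_+$. Your instance follows since $(x\varphi(1)x-\gamma)_+\sim(\varphi(1)^{1/2}x^2\varphi(1)^{1/2}-\gamma)_+$ by the $(y^*y-s)_+\sim(yy^*-s)_+$ identity, with $w=\varphi(1)^{1/2}$. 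With that citation in place, your proof goes through.
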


\begin{proof}
We may assume that $\|x\|\leq 1$, since if $\|x\|>1$ we can replace $x$ with
$x/\|x\|$ and $\varepsilon$ with $\varepsilon/\|x\|^2$ in \eqref{deftza-it1_Small}
and \eqref{deftza-it3_Small}, and then use
$(Mw-M\varepsilon)_+ \sim (w-\varepsilon)_+$ for all $w\in A_+$ and all $M>0$.
Set $x_0 = \bigl( x - \frac{\ep}{9} \bigr)_{+}$.
We require $x_0 \in F$.
Apply Lemma~\ref{L_8825_DFSub}
with $x$ in place of~$a$,
with $a$ in place of~$b$,
and with $\frac{\ep}{9}$ in place of~$\ep$,
getting $n_0 \in \N$.

Let $\nu \in \N$.
Apply Lemma~\ref{L_8808_CommEqEst}
with $\frac{\ep}{2}$ in place of~$\ep$,
and call the resulting number~$\rh_0$.
Use \cite[Lemma 2.5]{AP16} to choose $\rh_1 > 0$
such that whenever
$r, s \in A_{+}$
satisfy
\[
\| r \| \leq 1,
\qquad
\| s \| \leq 1,
\qquad {\mbox{and}} \qquad
\| r s - s r \| < \rh_1 ,
\]
then
\[
\bigl\| r^{1/2} s - s r^{1/2} \bigr\| < \frac{\ep}{4 n_0 \nu}.
\]
Set $\dt = \min (\rh, \rh_0, \rh_1)$.

Apply the definition of tracial $\mathcal{Z}$-absorption
(\cite[Definition~3.6]{AGJP21})
to find a c.p.c.~order zero map $\varphi \colon M_{n_0 \nu} \to A$
such that:
\begin{enumerate}
\item\label{8808_Use_deftza-it1}
$\bigl( x_0^{2} - x_0 \varphi (1) x_0
    - \tfrac{\varepsilon}{2} \bigr)_{+} \precsim a$.
\item\label{8808_Use_deftza-it2}
$\| [\varphi (z), b] \| < \dt$
for any $z \in M_{n_0 \nu}$ with $\| z \| \leq 1$
and any $b \in F  $.
\end{enumerate}

We have $\| x_0 - x \| < \frac{\ep}{8}$ and $\|x\|,\|x_0\|\leq 1$,
whence
\[
\bigl\| \bigl( x_0^2 - x_0 \ph (1) x_0 \bigr)
   - \bigl( x^2 - x \ph (1) x \bigr) \bigr\| < \frac{\ep}{2}.
\]
Then \cite[Corollary~1.6]{Ph14} implies
\[
\bigl( x^2 - x \ph (1) x - \ep \bigr)_{+}
 \precsim \bigl( x_0^2 - x_0 \ph (1) x_0 - \tfrac{\ep}{2} \bigr)_{+}
 \precsim a.
\]
This is part~\eqref{deftza-it1_Small}
of the conclusion.
Since $\dt \leq \rh$,
part~\eqref{deftza-it2_Small}
of the conclusion is clear.

It remains to prove part~\eqref{deftza-it3_Small}.
Since $x_0 \in F$,
the choice of $\rh_0$ using Lemma~\ref{L_8808_CommEqEst}
provides, for $k = 1, 2, \ldots, n_0 \nu$,
an element $v_k \in A$ such that
\[
\bigl\| v_k^* x_0 \ph (e_{k, k}) x_0 v_k
   - x_0 \ph (e_{1, 1}) x_0 \bigr\| < \frac{\ep}{2}
\andeqn
\| v_k \| \leq 1.
\]
Then, using \cite[Corollary~1.6]{Ph14} at the first step and
\cite[Lemma~2.3]{AGJP21} and $\| v_k \| \leq 1$ at the second step,
\begin{equation}\label{Eq_8808_HalfEst}
\bigl( x_0 \ph (e_{1, 1}) x_0 - \ep \bigr)_{+}
  \precsim \bigl( v_k^* x_0 \ph (e_{k, k}) x_0 v_k
      - \tfrac{\ep}{2} \bigr)_{+}
  \precsim \bigl( x_0 \ph (e_{k, k}) x_0
      - \tfrac{\ep}{2} \bigr)_{+}.
\end{equation}

Since $\dt \leq \rh_1$,
the choice of $\rh_1$ implies that
\[
\bigl\| \ph (e_{k, k})^{1/2} x_0 - x_0 \ph (e_{k, k})^{1/2} \bigr\|
 < \frac{\ep}{4 n_0 \nu}
\]
for $k = 1, 2, \ldots, n_0 \nu$.
So
\begin{align}\label{Eq_8817_NStar}
& \Biggl\| x_0 \ph (1) x_0
  - \sum_{k = 1}^{n_0 \nu}
       \ph (e_{k, k})^{1/2} x_0^2 \ph (e_{k, k})^{1/2} \Biggr\|
\\
& \hspace*{3em} {\mbox{}}
\leq \sum_{k = 1}^{n_0 \nu}
  \bigl\| x_0 \ph (e_{k, k}) x_0
       - \ph (e_{k, k})^{1/2} x_0^2 \ph (e_{k, k})^{1/2} \bigr\|
\notag
\\
& \hspace*{3em} {\mbox{}}
\leq \sum_{k = 1}^{n_0 \nu} \Bigl(
   \bigl\| \bigl[x_0, \ph (e_{k, k})^{1/2} \bigr] \bigr\|\cdot
            \bigl\| \ph (e_{k, k})^{1/2} x_0 \bigr\|
\notag
\\
& \hspace*{6em} {\mbox{}}
      + \bigl\| \ph (e_{k, k})^{1/2}x_0  \bigr\| \cdot
           \bigl\| \bigl[ \ph (e_{k, k})^{1/2}, x_0 \bigr] \bigr\|
    \Bigr)
\notag
\\
& \hspace*{3em} {\mbox{}}
 < 2 n_0 \nu \left( \frac{\ep}{4 n_0 \nu} \right)
 = \frac{\ep}{2}.
\notag
\end{align}
Since the elements
$\ph (e_{k, k})^{1/2} x_0^2 \ph (e_{k, k})^{1/2}$
are orthogonal,
we have
\[
\Biggl( \sum_{k = 1}^{n_0 \nu}
   \ph (e_{k, k})^{1/2} x_0^2 \ph (e_{k, k})^{1/2}
      - \frac{\ep}{2} \Biggr)_{+}
  = \sum_{k = 1}^{n_0 \nu}
      \bigl( \ph (e_{k, k})^{1/2} x_0^2 \ph (e_{k, k})^{1/2}
      - \tfrac{\ep}{2} \bigr)_{+}.
\]
Also by orthogonality,
and by~(\ref{Eq_8817_NStar}),
\[
\sum_{k = 1}^{n_0 \nu}
  \bigl\langle \bigl( \ph (e_{k, k})^{1/2} x_0^2 \ph (e_{k, k})^{1/2}
      - \tfrac{\ep}{2} \bigr)_{+} \bigr\rangle
 \leq \langle x_0 \ph (1) x_0 \rangle
 \leq \langle x_0 \rangle.
\]
By \cite[Proposition 2.3(ii)]{ERS},
we have
\[
\bigl( \ph (e_{k, k})^{1/2} x_0^2 \ph (e_{k, k})^{1/2}
      - \tfrac{\ep}{2} \bigr)_{+}
   \sim \bigl( x_0 \ph (e_{k, k}) x_0
      - \tfrac{\ep}{2} \bigr)_{+}.
\]
Therefore, using~\eqref{Eq_8808_HalfEst},
it follows that
\[
n_0 \nu \bigl\langle \bigl( x_0 \ph (e_{1, 1}) x_0 - \ep \bigr)_{+}
           \bigr\rangle
 \leq \langle x_0 \rangle.
\]
Now the choice of $n_0$ using Lemma~\ref{L_8825_DFSub}
implies
$\nu \bigl\langle ( x \ph (e_{1, 1}) x - \varepsilon )_{+} \bigr\rangle
 \leq \langle a \rangle$,
as desired.
\end{proof}

\begin{lemma}\label{L_8808_Switch}
Let $A$ and $B$ be simple C*-algebras,
with $B$ not of type~I.
Let $a, x \in A_{+} \setminus \{ 0 \}$,
and let $b \in B_{+} \setminus \{ 0 \}$.
Then for every $\ep > 0$
there is
$y \in \bigl( {\overline{b B b}} \bigr)_{+} \setminus \{ 0 \}$
such that
$(a - \ep)_{+} \otimes y \precsim x \otimes b$
in $A \otimes_{\min} B$.
\end{lemma}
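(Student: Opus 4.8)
The plan is to combine two one-sided comparisons and then fuse them by a tensor argument: from simplicity of $A$ I will extract a subequivalence $(a - \ep)_{+} \precsim x^{\oplus n}$ for some $n$, and from the failure of type~I for $B$ I will produce a nonzero $y \in \bigl( {\overline{b B b}} \bigr)_{+}$ with $y^{\oplus n} \precsim b$ for that same $n$. Tensoring the first relation with $y$ and the second with $x$, and working inside $M_n(A \otimes_{\min} B)$, will give the assertion.

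For the first comparison I use that $A$ is simple and $x \neq 0$, so $\overline{A x A} = A$ and hence $a \in \bigl( {\overline{A x A}} \bigr)_{+}$. Lemma~\ref{L_8825_yStarby} then supplies $n \in \N$ and $z_1, z_2, \ldots, z_n \in A$ with $(a - \ep)_{+} = \sum_{j=1}^n z_j^* x z_j$, and the diagonalization computation used in the proof of Lemma~\ref{L_8825_DFSub} yields
\[
(a - \ep)_{+} \precsim \diag ( x, x, \ldots, x ) = x \otimes 1_n
\]
in $M_n(A)$; that is, $(a - \ep)_{+} \precsim x^{\oplus n}$.

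The second comparison is the crux, and it is the only place where the hypothesis that $B$ is not of type~I enters (if $B = \mathcal{K}(H)$ and $b$ were a rank-one projection, then $\overline{b B b} = \mathbb{C}$ and no such $y$ could exist for $n \geq 2$). Since $B$ is simple and not of type~I, the hereditary subalgebra $\overline{b B b}$ is again simple and not of type~I, hence non-elementary. By the standard divisibility property of non-elementary simple C*-algebras (a consequence of Glimm's theorem), there are nonzero, pairwise orthogonal, pairwise Cuntz-equivalent positive elements $y_1, y_2, \ldots, y_m \in \overline{b B b}$ with $m \geq n$. Put $y = y_1$. Because the $y_i$ are orthogonal and lie in $\overline{b B b}$, their orthogonal sum satisfies $\bigoplus_{i} y_i \sim \sum_{i} y_i \precsim b$, the last relation holding because $b$ is a strictly positive element of $\overline{b B b}$ and therefore Cuntz-dominates every positive element of it. Hence
\[
n \langle y \rangle \leq m \langle y \rangle = \Bigl\langle \textstyle\bigoplus_{i} y_i \Bigr\rangle \leq \langle b \rangle,
\qquad \text{equivalently} \qquad
y^{\oplus n} \precsim b
\]
in $M_n(B)$, with $y \neq 0$.

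Finally I recombine. Cuntz subequivalence is preserved under tensoring on either side with a fixed positive element, and subequivalence of elements of a C*-algebra is detected in any matrix amplification, so working in $M_n(A \otimes_{\min} B) \cong M_n(A) \otimes_{\min} B \cong A \otimes_{\min} M_n(B)$ I chain
\[
(a - \ep)_{+} \otimes y \precsim x^{\oplus n} \otimes y = (x \otimes y)^{\oplus n} = x \otimes y^{\oplus n} \precsim x \otimes b .
\]
Since both $(a - \ep)_{+} \otimes y$ and $x \otimes b$ sit in the top-left corner $A \otimes_{\min} B$, the resulting relation holds in $A \otimes_{\min} B$, which is exactly the claim. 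I expect the only genuinely nontrivial point to be the divisibility step in $\overline{b B b}$; the two tensor manipulations and the passage between $A \otimes_{\min} B$ and its amplification are routine.
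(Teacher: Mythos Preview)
Your proof is correct and follows essentially the same route as the paper's: both use simplicity of $A$ to get $(a-\ep)_+ \precsim x^{\oplus n}$ via elements of the form $z_j^* x z_j$, both use that $B$ is not of type~I to find $n$ orthogonal Cuntz-equivalent nonzero positive elements in $\overline{bBb}$ (whence $y^{\oplus n} \precsim b$), and both combine these by tensoring. The only cosmetic differences are that the paper phrases the chain directly in the Cuntz semigroup rather than naming the matrix amplifications, and uses the approximate decomposition $\|a - \sum c_j^* x c_j\| < \ep$ instead of invoking Lemma~\ref{L_8825_yStarby} for an exact one.
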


\begin{proof}
By \cite[Lemma 1.13]{Ph14},
there are $n \in \N$ and $c_1, c_2, \ldots, c_n \in A$
such that $\left\| a - \sum_{j = 1}^n c_j^* x c_j \right\| < \ep$.
Use \cite[Lemma~2.1]{Ph14} to find orthogonal
Cuntz equivalent elements
$y_1, y_2, \ldots, y_n \in ({\overline{b B b}} )_{+} \setminus \{ 0 \}$.
Set $y = y_1$.
Then in $W (A \otimes_{\min} B)$ we have
\begin{align*}
\bigl\langle (a - \ep)_{+} \otimes y \bigr\rangle
& \leq \Biggl\langle \sum_{j = 1}^n c_j^* x c_j \otimes y \Biggr\rangle
  \leq \sum_{j = 1}^n \bigl\langle c_j^* x c_j \otimes y \bigr\rangle
\\
& = \sum_{j = 1}^n \langle x^{1/2} c_j c_j^* x^{1/2} \otimes y \rangle
  \leq n \langle x \otimes y \rangle
  = \sum_{j = 1}^n \langle x \otimes y_j \rangle
  \leq \langle x \otimes b \rangle.
\end{align*}
This completes the proof.
\end{proof}

\begin{lemma}\label{L_8808_Dom}
Let $A$ be a simple C*-algebra which is not of type~I.
Then for every $m \in \N$,
every $b \in (A^{\otimes m})_{+} \setminus \{ 0 \}$,
every $x \in A_{+}$ with $\| x \| = 1$,
and for every $\ep > 0$,
there are $\dt > 0$ and $z \in A_{+} \setminus \{ 0 \}$
such that, whenever $a \in A_{+}$ satisfies $\| a \| \leq 1$
and $( x^2 - x a x - \dt )_{+} \precsim z$,
then
$\big[ (x^2)^{\otimes m} - (x a x)^{\otimes m} - \ep \bigr]_{+}
  \precsim b$.
\end{lemma}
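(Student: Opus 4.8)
The plan is to pass to the factored form $p:=x^2$, $q:=xax$, $X:=x^{\otimes m}$, $A_0:=a^{\otimes m}$, under which a direct computation (in the unitization) gives $(x^2)^{\otimes m}-(xax)^{\otimes m}=X^2-XA_0X=X(1-A_0)X$, where $0\le A_0\le1$ and $\|X\|=1$. The advantage of this form is that it lets me make the ``bulk'' tensor factors depend only on the fixed element $p=x^2$, and not on the unknown $a$. Concretely, I would first record the telescoping operator inequality $1-a^{\otimes m}\le\sum_{j=1}^m 1^{\otimes(j-1)}\otimes(1-a)\otimes1^{\otimes(m-j)}$ (each summand being dominated by the corresponding term with the leading $a^{\otimes(j-1)}$ replaced by $1$, since $a\le1$ and $1-a\ge0$), and conjugate it by $X$ to obtain
\[
(x^2)^{\otimes m}-(xax)^{\otimes m}=X(1-A_0)X\le\sum_{j=1}^m V_j,\qquad V_j:=p^{\otimes(j-1)}\otimes(p-q)\otimes p^{\otimes(m-j)}.
\]
Every bulk factor of $V_j$ is the fixed $p=x^2$; only the middle factor $p-q=x(1-a)x$ carries the dependence on $a$, and this is exactly the factor controlled by the hypothesis.

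Next I would absorb the bulk into the outer cut by a norm perturbation. Fix small $\dt,\eta>0$ with $m\dt+m(m-1)\eta<\ep$ and set $\widehat V_j:=(p-\eta)_+^{\otimes(j-1)}\otimes(p-q-\dt)_+\otimes(p-\eta)_+^{\otimes(m-j)}$; replacing each of the $m-1$ bulk factors $p$ by $(p-\eta)_+$ costs at most $\eta$ and the middle replacement at most $\dt$ in norm (all norms being $\le1$), so $\|V_j-\widehat V_j\|\le\dt+(m-1)\eta$. Combining the displayed operator inequality with \cite[Corollary~1.6]{Ph14} (and the standard fact that $c\le d$ implies $(c-\ep)_+\precsim(d-\ep)_+$) yields $\big[(x^2)^{\otimes m}-(xax)^{\otimes m}-\ep\big]_+\precsim\bigoplus_{j=1}^m\widehat V_j$. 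Using \cite[Lemma~2.1]{Ph14} in the simple, non-type-I algebra $A^{\otimes m}$, I would then choose pairwise orthogonal, mutually Cuntz-equivalent nonzero $b^{(1)},\dots,b^{(m)}\in\big(\overline{bA^{\otimes m}b}\big)_+$, so that $\bigoplus_j b^{(j)}\precsim b$. Since the hypothesis $(p-q-\dt)_+\precsim z$ gives $\widehat V_j\precsim(p-\eta)_+^{\otimes(j-1)}\otimes z\otimes(p-\eta)_+^{\otimes(m-j)}$, it now suffices to produce a \emph{single} nonzero $z$ with
\[
(p-\eta)_+^{\otimes(j-1)}\otimes z\otimes(p-\eta)_+^{\otimes(m-j)}\precsim b^{(j)}\qquad(j=1,\dots,m),
\]
an inequality in which $p=x^2$, $\eta$, and the $b^{(j)}$ are all independent of $a$, so that $\dt$ and $z$ may legitimately be chosen before $a$.

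The heart of the matter is a sub-claim proved by induction on $n$ using Lemma~\ref{L_8808_Switch}: given nonzero $g\in A_+$, a number $\eta>0$, and any nonzero $w\in(A^{\otimes n})_+$, there is a nonzero $z\in A_+$ with $z\otimes(g-\eta)_+^{\otimes(n-1)}\precsim w$. In the inductive step I peel off one factor: after passing to a nonzero elementary tensor $c'\otimes d'\precsim w$ (the standard fact that in a minimal tensor product of simple C*-algebras every nonzero positive element Cuntz-dominates a nonzero elementary tensor, via slice maps), I apply Lemma~\ref{L_8808_Switch} with ``$a$''$=g$ and Switch-parameter $\eta$ to the last factor to get nonzero $y\in(A^{\otimes(n-1)})_+$ with $y\otimes(g-\eta)_+\precsim c'\otimes d'\precsim w$, and the inductive hypothesis applied to $y$ then supplies $z$. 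The decisive point---and the step I expect to be the main obstacle---is the matching of cuts: Lemma~\ref{L_8808_Switch} unavoidably replaces its input $g$ by $(g-\eta)_+$, so one must have arranged the bulk factors to be \emph{exactly} $(g-\eta)_+$ in advance, which is precisely what the norm-perturbation of the previous paragraph (paid for by the outer $\ep$) achieves. Finally I would apply the sub-claim with $g=p$, $n=m$, and $w$ a permuted copy of $b^{(j)}$ (factor permutations being $\precsim$-preserving automorphisms of $A^{\otimes m}$) to get elements $z_j$, and take a single nonzero $z$ with $z\precsim z_j$ for all $j$ (possible since $A$ is simple and there are finitely many $j$); this $z$ together with $\dt$ is the required output.
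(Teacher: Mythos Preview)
Your proof is correct and uses the same essential ingredients as the paper---a telescoping decomposition of $(x^2)^{\otimes m}-(xax)^{\otimes m}$, a norm perturbation to make the bulk tensor factors independent of~$a$, and Lemma~\ref{L_8808_Switch} to produce the element~$z$---but the organisation is genuinely different in two respects.

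First, your telescoping is an \emph{operator inequality}: you bound $1-a^{\otimes m}$ above by $\sum_j 1^{\otimes(j-1)}\otimes(1-a)\otimes 1^{\otimes(m-j)}$ and conjugate by $x^{\otimes m}$, so that every bulk factor of your $V_j$ is already the fixed element $p=x^2$. The paper instead uses the exact identity $\sum_k (xax)^{\otimes(k-1)}\otimes(x^2-xax)\otimes(x^2)^{\otimes(m-k)}$; the leading bulk factors still depend on~$a$, and are only later dominated by $a$-independent ones via $(x-2\dt)_+a(x-2\dt)_+\precsim\bigl[(x-2\dt)_+\bigr]^2$. Your route is cleaner here, at the cost of needing the implication ``$0\le c\le d\Rightarrow (c-\ep)_+\precsim(d-\ep)_+$'' (which is indeed true: with $k(t)=(t-\ep)_+/t$ one checks $\|c-c^{1/2}k(d)c^{1/2}\|\le\ep$ and $c^{1/2}k(d)c^{1/2}\sim k(d)^{1/2}ck(d)^{1/2}\le(d-\ep)_+$), though you could equally well bypass it by observing $c-\ep\le\sum_j\widehat V_j$ directly.

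Second, you handle general $b$ by splitting it into $m$ orthogonal pieces $b^{(1)},\ldots,b^{(m)}$ in $\overline{b(A^{\otimes m})b}$ and treating each $\widehat V_j$ separately, whereas the paper first reduces (via Kirchberg's slice lemma and \cite[Lemma~2.6]{Ph14}) to the special case $b=y^{\otimes m}$ and then runs a more explicit bookkeeping with the orthogonal elements $r_1,\ldots,r_m$ and the array $z_{j_1,\ldots,j_{m-1}}$. Your inductive sub-claim packages the repeated use of Lemma~\ref{L_8808_Switch} more transparently; the paper's argument unwinds essentially the same induction by hand with concrete choices. Both approaches have the same content; yours is somewhat more modular.
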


\begin{proof}
First suppose that there is $y \in A_{+} \setminus \{ 0 \}$
such that $b = y^{\otimes m}$.
Set
\[
\dt = \frac{\ep}{4 m^2}.
\]
By \cite[Lemma 1.13]{Ph14},
there are $n \in \N$ and $c_1, c_2, \ldots, c_n \in A$
such that $\left\| x - \sum_{j = 1}^n c_j^* y c_j \right\| < \dt$.
Then, as in the proof of Lemma~\ref{L_8825_DFSub},
\begin{equation}\label{Eq_8817_NN}
(x - \dt)_{+}
  \precsim 1_n \otimes y,
\end{equation}
whence $ 1_n \otimes y$ in $M_n (A)$.
Therefore there is $\dt_0 > 0$ such that
\begin{equation}\label{Eq_8808_Label}
(x - 2 \dt)_{+} \precsim (1_n \otimes y - \dt_0)_{+}.
\end{equation}
Use \cite[Lemma~2.4]{Ph14} to find orthogonal
Cuntz equivalent elements
$r_1, r_2, \ldots, r_n \in ({\overline{y A y}} )_{+} \setminus \{ 0 \}$.
Set $r = r_1$.

Apply Lemma~\ref{L_8808_Switch} with $y$ in place of both $a$ and~$b$,
with $r$ in place of~$x$,
and with $\dt_0$ in place of~$\ep$,
and call the resulting element~$s_0$.
Choose $s \in A_{+} \setminus \{ 0 \}$
such that $s \precsim s_0$ and $s \precsim r$.
Then
\begin{equation}\label{Eq_8817_Sw}
(y - \dt_0)_{+} \otimes s \precsim r \otimes y.
\end{equation}

Use \cite[Lemma~2.4]{Ph14} to find orthogonal
Cuntz equivalent elements
$z_{j_1, j_2, \ldots, j_{m - 1}}
 \in ({\overline{s A s}} )_{+} \setminus \{ 0 \}$
for $j_1, j_2, \ldots, j_{m - 1} = 1, 2, \ldots, n$.
Set $z = z_{1, 1, \ldots, 1}$.

Suppose $( x^2 - x a x - \dt )_{+} \precsim z$.
For $k = 1, 2, \ldots, m$ define
\[
h_k
 = \big[ (x - 2 \dt)_{+} a (x - 2 \dt)_{+} \bigr]^{\otimes (k - 1)}
  \otimes ( x^2 - x a x - \dt )_{+}
  \otimes
     \big[ \bigl( (x - 2 \dt)_{+} \bigr)^2 \bigr]^{\otimes (m - k )}
\]
and
\[
h_k^{(0)}
 = (x a x)^{\otimes (k - 1)}
  \otimes ( x^2 - x a x )
  \otimes ( x^2 )^{\otimes (m - k )}.
\]
Using
$\| x \| \leq 1$ and $\| a \| \leq 1$,
we get
$\bigl\| h_k -  h_k^{(0)} \bigr\| \leq (4 m - 3) \dt$.
Since
\[
\sum_{k = 1}^m h_k^{(0)} = ( x^2 )^{\otimes m} - (x a x)^{\otimes m},
\]
it follows that
\begin{equation}\label{Eq_8808_LabelTwo}
\Biggl\| ( x^2 )^{\otimes m}
    - (x a x)^{\otimes m} - \sum_{k = 1}^m h_k \Biggr\|
 \leq m (4 m - 3) \dt
 < \ep.
\end{equation}

We have, using (\ref{Eq_8817_NN}) at the second step,
\begin{align*}
h_1
& = ( x^2 - x a x - \dt )_{+}
  \otimes \big[ \bigl( (x - 2 \dt)_{+} \bigr)^2 \bigr]^{\otimes (m - 1 )}
\\
& \precsim z \otimes [1_n \otimes y]^{\otimes (m - 1 )}
\\
& \sim \sum_{j_1, j_2, \ldots, j_{m - 1} = 1}^n
   z_{j_1, j_2, \ldots, j_{m - 1}} \otimes y^{\otimes (m - 1 )}
\\
& \precsim s \otimes y^{\otimes (m - 1 )}
  \precsim r \otimes y^{\otimes (m - 1 )}
  \sim r_1 \otimes y^{\otimes (m - 1 )}.
\end{align*}

For $k = 2, 3, \ldots, m$,
using (\ref{Eq_8808_Label})
and (\ref{Eq_8817_NN}) at the third step
and using (\ref{Eq_8817_Sw}) at the sixth step,
\begin{align*}
h_k
& = \big[ (x - 2 \dt)_{+} a (x - 2 \dt)_{+} \bigr]^{\otimes (k - 1)}
  \otimes ( x^2 - x a x - \dt )_{+}
  \otimes
   \big[ \bigl( (x - 2 \dt)_{+} \bigr)^2 \bigr]^{\otimes (m - k )}
\\
& \precsim
   \big[ \bigl( (x - 2 \dt)_{+} \bigr)^2 \bigr]^{\otimes (k - 1)}
  \otimes ( x^2 - x a x - \dt )_{+}
  \otimes
    \big[ \bigl( (x - 2 \dt)_{+} \bigr)^2 \bigr]^{\otimes (m - k )}
\\
& \precsim \big[ 1_n \otimes (y - \dt_0)_{+} \bigr]
    \otimes [1_n \otimes y]^{\otimes (k - 2 )}
    \otimes z
    \otimes [1_n \otimes y]^{\otimes (m - k )}
\\
& \sim \sum_{j_1, j_2, \ldots, j_{m - 1} = 1}^n
   (y - \dt_0)_{+}
   \otimes y^{\otimes (k - 2 )}
   \otimes z_{j_1, j_2, \ldots, j_{m - 1}} \otimes y^{\otimes (m - k )}
\\
& \precsim (y - \dt_0)_{+}
   \otimes y^{\otimes (k - 2 )}
   \otimes s \otimes y^{\otimes (m - k )}
\\
& \precsim r
   \otimes y^{\otimes (k - 2 )}
   \otimes y \otimes y^{\otimes (m - k )}
  = r \otimes y^{\otimes (m - 1 )}
  \sim r_k \otimes y^{\otimes (m - 1 )}.
\end{align*}
Therefore
\[
\sum_{k = 1}^m h_k
 \precsim
    \Biggl( \sum_{k = 1}^m r_k \Biggr) \otimes y^{\otimes (m - 1 )}
 \precsim y^{\otimes m}.
\]
It now follows from~\eqref{Eq_8808_LabelTwo}
that
\[
\big[ (x^2)^{\otimes m} - (x a x)^{\otimes m} - \ep \bigr]_{+}
  \precsim \sum_{k = 1}^m h_k
  \precsim y^{\otimes m}.
\]
This completes the proof of the special case.

For the general case,
use induction and Kirchberg's Slice Lemma (\cite[Lemma~4.1.9]{Ro02})
to find $y_1, y_2, \ldots, y_m \in A_{+} \setminus \{ 0 \}$
such that $y_1 \otimes y_2 \otimes \cdots \otimes y_m \precsim b$.
Use \cite[Lemma 2.6]{Ph14}
to choose $y \in A_{+} \setminus \{ 0 \}$
such that $y \precsim y_k$ for $k = 1, 2, \ldots, m$.
Then $y^{\otimes m} \precsim b$.
The choices of $\dt$ and $z$ which work for~$y$
in the special case also work for~$b$.
\end{proof}

\begin{lemma}\label{L_8809_UseAppId}
Let $A$ be a simple C*-algebra, let $G$ be a finite group,
and let $\alpha \colon G \to \mathrm{Aut} (A)$
be an action of $G$ on~$A$.
Let $x \in A_{+} \setminus \{ 0 \}$.
Suppose that an element $y \in A_{+}$ has the following property.
For any finite set $F \subseteq A$,
any $\varepsilon > 0$,
and any $x \in A_{+} \setminus \{ 0 \},$
there exist orthogonal positive contractions
$f_g \in A$ for $g \in G$ such that,
with $f = \sum_{g \in G} f_{g}$,
the following hold:
\begin{enumerate}
\item\label{L_8809_UseAppId_WTRP-COM}
$\| f_{g} a - a f_{g} \| < \varepsilon$
for all $a \in F$ and all $g \in G$.
\item\label{L_8809_UseAppId_WTRP-LTR}
$\|\alpha_{g} (f_{h}) - f_{g h} \| < \varepsilon$ for all $g, h \in G$.
\item\label{L_8809_UseAppId_WTRP-CU-EQ}
$\bigl( y^{2} - y f y - \varepsilon \bigr)_{+} \precsim x$.
\end{enumerate}
Then every positive element $z \in \overline{A y}$
also has the same property.
\end{lemma}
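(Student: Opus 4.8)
The plan is to reduce the whole statement to the Cuntz-comparison condition~\eqref{L_8809_UseAppId_WTRP-CU-EQ}. The commutator condition~\eqref{L_8809_UseAppId_WTRP-COM} and the approximate-equivariance condition~\eqref{L_8809_UseAppId_WTRP-LTR} do not mention $y$ or $z$ at all: whatever orthogonal positive contractions $(f_g)_{g \in G}$ I extract from the property of $y$, they will satisfy \eqref{L_8809_UseAppId_WTRP-COM} and \eqref{L_8809_UseAppId_WTRP-LTR} for $z$ as soon as I feed the property of $y$ a finite set containing the given $F$ and a tolerance $\le \varepsilon$. So fix a finite set $F \subseteq A$, a number $\varepsilon > 0$, and an element $x \in A_{+} \setminus \{ 0 \}$; the entire problem is to produce $(f_g)_{g \in G}$ with $f = \sum_{g} f_g$ and $\bigl( z^{2} - z f z - \varepsilon \bigr)_{+} \precsim x$.

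The key device is to factor $z$ through $y$. Since $z \in \overline{A y}$ and $z = z^{*}$, for every $\dt > 0$ there is $r \in A$ with $z \approx_{\dt} r y$ and hence $z \approx_{\dt} y r^{*}$; multiplying the two approximations gives $z^{2} \approx_{\dt'} r y^{2} r^{*}$, where $\dt'$ is controlled by $\dt$ and $\| z \|$. I would first fix $\dt$, hence $r$ and $M := \max (1, \| r \|)$, small enough that $\dt' < \tfrac{\varepsilon}{4}$. The decisive point is that $r$, and therefore the uncontrolled norm $M$, are chosen \emph{before} the property of $y$ is invoked. Only afterwards do I set the tolerance $\varepsilon'' := \min \bigl( \varepsilon, \, \eta, \, \tfrac{\varepsilon}{4 M^{2}} \bigr)$, where $\eta > 0$ (small, depending on $M$ and $\varepsilon$) controls, via \cite[Lemma~2.5]{AP16}, the error made in commuting $(1 - f)^{1/2}$ past $r$ and $r^{*}$. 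Applying the hypothesised property of $y$ to the finite set $F \cup \{ r, r^{*} \}$, the tolerance $\varepsilon''$, and the element $x$, I obtain orthogonal positive contractions $f_g$ satisfying the commutator and equivariance estimates to within $\varepsilon''$, together with $\bigl( y^{2} - y f y - \varepsilon'' \bigr)_{+} \precsim x$.

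It then remains to run the Cuntz-comparison chain for~\eqref{L_8809_UseAppId_WTRP-CU-EQ}. Writing $z^{2} - z f z = z (1 - f) z$ and putting $c = (1 - f)^{1/2} z$, the matched-cutoff equivalence $(c^{*} c - \varepsilon)_{+} \sim (c c^{*} - \varepsilon)_{+}$ replaces $z (1 - f) z$ by $(1 - f)^{1/2} z^{2} (1 - f)^{1/2}$ without changing the cutoff. Using $z^{2} \approx_{\dt'} r y^{2} r^{*}$ and sliding $(1 - f)^{1/2}$ past $r$ and $r^{*}$ (an error I can keep below $\tfrac{\varepsilon}{4}$ by the choice of $\eta$), the perturbation lemma \cite[Corollary~1.6]{Ph14} gives $\bigl( (1 - f)^{1/2} z^{2} (1 - f)^{1/2} - \varepsilon \bigr)_{+} \precsim \bigl( r b_{0} r^{*} - \tfrac{\varepsilon}{4} \bigr)_{+}$, where $b_{0} := (1 - f)^{1/2} y^{2} (1 - f)^{1/2}$. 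Applying $(c^{*} c - s)_{+} \sim (c c^{*} - s)_{+}$ again turns $r b_{0} r^{*}$ into $b_{0}^{1/2} r^{*} r \, b_{0}^{1/2}$; since $r^{*} r \le M^{2}$, this is dominated in the order of $A$ by $M^{2} b_{0}$, so the order form of Cuntz comparison $\bigl( 0 \le u \le v \Rightarrow (u - s)_{+} \precsim (v - s)_{+} \bigr)$ yields $\bigl( b_{0}^{1/2} r^{*} r \, b_{0}^{1/2} - \tfrac{\varepsilon}{4} \bigr)_{+} \precsim \bigl( b_{0} - \tfrac{\varepsilon}{4 M^{2}} \bigr)_{+}$. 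Finally $b_{0} = \bigl( y (1 - f)^{1/2} \bigr)^{*} \bigl( y (1 - f)^{1/2} \bigr)$, so one more matched-cutoff equivalence identifies $\bigl( b_{0} - \tfrac{\varepsilon}{4 M^{2}} \bigr)_{+}$ with $\bigl( y (1 - f) y - \tfrac{\varepsilon}{4 M^{2}} \bigr)_{+} = \bigl( y^{2} - y f y - \tfrac{\varepsilon}{4 M^{2}} \bigr)_{+}$, which is $\precsim \bigl( y^{2} - y f y - \varepsilon'' \bigr)_{+} \precsim x$ because $\varepsilon'' \le \tfrac{\varepsilon}{4 M^{2}}$. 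Chaining the subequivalences gives $\bigl( z^{2} - z f z - \varepsilon \bigr)_{+} \precsim x$, as required.

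The genuine obstacle is the bookkeeping of constants needed to avoid a vicious circle. The conjugating element $r$ carries an uncontrolled norm $M$, and the cutoff $\tfrac{\varepsilon}{4 M^{2}}$ it forces on the $y$-side defect must be supplied to the property of $y$ \emph{before} that property produces $f$; this is exactly why $M$ is pinned down first, and why I rely on the \emph{explicit} order-comparison lemma rather than its existential form ``$a \precsim b \Rightarrow \forall \varepsilon \, \exists \dt \colon (a - \varepsilon)_{+} \precsim (b - \dt)_{+}$'', whose $\dt$ would depend on the already-chosen $f$ and thereby break the argument. The commutation hypotheses on $r$ and $r^{*}$ play only the auxiliary role of letting $(1 - f)^{1/2}$ slide past $r$ with an error that, since $M$ is fixed first, can be made as small as I wish by shrinking $\eta$.
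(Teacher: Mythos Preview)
Your argument is correct and follows the same strategy the paper has in mind: the paper's proof is the single line ``essentially the same as that of \cite[Lemma~3.5]{AGJP21}'', and your write-up is exactly the kind of argument that reference points to --- approximate $z$ by $ry$, fix $r$ and its norm $M$ first, then invoke the hypothesis on $y$ with $r, r^{*}$ added to the finite set and with tolerance shrunk by a factor depending on $M$, and finally push the defect $z(1-f)z$ down to $y(1-f)y$ via the $(c^{*}c - s)_{+} \sim (cc^{*} - s)_{+}$ trick.

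One small comment: the detour through $b_{0}^{1/2} r^{*} r\, b_{0}^{1/2}$ and the appeal to an ``order form'' $0 \le u \le v \Rightarrow (u - s)_{+} \precsim (v - s)_{+}$ is unnecessary and that implication is not one of the standard Cuntz lemmas. You can go directly from $(r b_{0} r^{*} - \tfrac{\varepsilon}{4})_{+}$ to $(b_{0} - \tfrac{\varepsilon}{4 M^{2}})_{+}$ by writing $r = M \cdot (r/M)$ and applying \cite[Lemma~2.3]{AGJP21} with $d = r^{*}/M$ (this is exactly how the paper handles analogous steps, e.g.\ in the proof of Lemma~\ref{L_8808_EltsSmall}). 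With that replacement your chain is airtight and matches the intended proof.
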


\begin{proof}
The proof is essentially the same as that of \cite[Lemma~3.5]{AGJP21}.
\end{proof}

\begin{lemma}\label{L_8814_TRP_Fin}
Let $A$ be a finite simple C*-algebra, let $G$ be a finite group,
and let $\alpha \colon G \to \mathrm{Aut} (A)$
be an action of $G$ on~$A$.
Suppose that for every $\varepsilon > 0$,
every finite set $F \subseteq A$,
and every $x, y \in A_{+}$ with $x \neq 0$,
there exist orthogonal positive contractions
$f_g \in A$ for $g \in G$ such that,
with $f = \sum_{g \in G} f_{g}$,
the following hold:
\begin{enumerate}
\item\label{L_8814_TRP_Fin_WTRP_COM}
$\| f_{g} a - a f_{g} \| < \varepsilon$
for all $a \in F$ and all $g \in G$.
\item\label{L_8814_TRP_FinWTRP_LTR}
$\|\alpha_{g} (f_{h}) - f_{g h} \| < \varepsilon$ for all $g, h \in G$.
\item\label{L_8814_TRP_FinWTRP_CU_EQ}
$\bigl( y^{2} - y f y - \varepsilon \bigr)_{+} \precsim x$.
\end{enumerate}
Then $\alpha$ has the weak tracial Rokhlin property.
\end{lemma}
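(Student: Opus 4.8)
The plan is to read this as a repackaging of \cite[Proposition~7.12]{AGJP21}: for an action of a finite group on a finite simple C*-algebra, the outerness condition~\eqref{WTRP-OUT-CO} of Definition~\ref{defwtrp} is already forced by conditions~\eqref{WTRP-COM}, \eqref{WTRP-LTR}, and~\eqref{WTRP-CU-EQ}. Our hypothesis supplies exactly \eqref{WTRP-COM}--\eqref{WTRP-CU-EQ}, and does so for \emph{all} nonzero $x \in A_{+}$, which in particular covers the normalization $\| x \| = 1$ demanded in Definition~\ref{defwtrp}. So, fixing $\ep > 0$, a finite set $F \subseteq A$, and $x, y \in A_{+}$ with $\| x \| = 1$, the only thing left to produce is~\eqref{WTRP-OUT-CO}, and I would do this by feeding a carefully chosen instance of the hypothesis into the comparison machinery of \cite[Proposition~7.12]{AGJP21}.

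Concretely, first I would pick a positive contraction $e_0 \in A$ acting as an approximate local unit for $x$, so that $\| e_0 x e_0 \| > 1 - \tfrac{\ep}{2}$, and then a slightly larger positive contraction $y'$ with $y' y \approx y$ and $y' e_0 \approx e_0$. The heart of the matter is a finiteness-driven comparison estimate --- the nonunital analogue of \cite[Lemma~2.9]{Ph14} --- producing a nonzero $z \in \bigl( \overline{x A x} \bigr)_{+}$ and a tolerance $\dt > 0$ such that any positive contraction $f$ with $\bigl( e_0^2 - e_0 f e_0 - \dt \bigr)_{+} \precsim z$ must satisfy $\| f x f \| > 1 - \ep$. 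I would then apply the hypothesis with $y'$ as the ``$y$''-input, with the small witness $z$ as the ``$x$''-input, with $F$ unchanged, and with $\dt$ as the tolerance, obtaining orthogonal positive contractions $f_g$ whose sum $f = \sum_{g \in G} f_g$ satisfies~\eqref{WTRP-COM} and~\eqref{WTRP-LTR} at once, together with $\bigl( (y')^2 - y' f y' - \dt \bigr)_{+} \precsim z$.

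From this one subequivalence I would extract the two remaining conditions by compression, writing $w = (y')^2 - y' f y' \ge 0$ (positivity holds because $f \le 1$). Since $y' y \approx y$, the element $y^2 - y f y$ is close to $y w y$, so \cite[Corollary~1.6]{Ph14} together with \cite[Lemma~2.3]{AGJP21} (applied with $y$ in the role of the contraction) gives $\bigl( y^2 - y f y - \ep \bigr)_{+} \precsim (w - \dt)_{+} \precsim z \precsim x$, the last step because $z \in \bigl( \overline{x A x} \bigr)_{+}$; this is~\eqref{WTRP-CU-EQ}. Compressing instead by $e_0$, using $y' e_0 \approx e_0$, the same two lemmas yield $\bigl( e_0^2 - e_0 f e_0 - \dt \bigr)_{+} \precsim z$, whereupon the comparison estimate of the previous paragraph forces $\| f x f \| > 1 - \ep$, which is~\eqref{WTRP-OUT-CO}. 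This gives the weak tracial Rokhlin property.

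The main obstacle is exactly the comparison estimate of the second paragraph: passing from a Cuntz subequivalence of the \emph{defect} $e_0^2 - e_0 f e_0$ to a genuine \emph{norm} lower bound on $\| f x f \|$. This is where finiteness is indispensable --- in a purely infinite simple algebra the defect can be norm-large yet Cuntz-small, so no such implication holds, which is the phenomenon recorded in Remark~\ref{rmk_fg}. Making this estimate work in the nonunital setting, where there is no unit available to form $1 - f$, is the technical core, and is precisely what \cite[Proposition~7.12]{AGJP21} is built to provide.
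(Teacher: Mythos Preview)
Your proposal is correct and matches the paper's approach exactly: the paper's own proof is the single sentence ``The proof is the same as that of \cite[Proposition~7.12]{AGJP21},'' and you have correctly identified this and then sketched the argument behind that proposition. Your outline of the finiteness-driven comparison estimate and the double compression (by $y$ and by $e_0$) is a faithful description of how \cite[Proposition~7.12]{AGJP21} works, so there is nothing to add.
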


The conditions are the same as in
Definition~\ref{defwtrp},
except that we have omitted~(\ref{WTRP-OUT-CO}),
the requirement that
$\|f x f\| > 1 - \varepsilon$.

\begin{proof}[Proof of Lemma~\ref{L_8814_TRP_Fin}]
The proof is the same as that of \cite[Proposition~7.12]{AGJP21}.
\end{proof}

\begin{theorem}\label{T_8816_PermwTRP}
Let $A$
be a simple tracially $\mathcal{Z}$-absorbing C*-algebra,
let $m \in \N$,
and adopt Notation~\ref{N_8808_Setup}.
Suppose that $A^{\otimes m}$ is finite
(\cite[Definition~7.1]{AGJP21}).
Then the permutation action $\bt \colon S_m \to \Aut (A^{\otimes m})$
has the weak tracial Rokhlin property.
\end{theorem}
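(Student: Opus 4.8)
The plan is to verify Conditions~\eqref{WTRP-COM}, \eqref{WTRP-LTR} and~\eqref{WTRP-CU-EQ} of Definition~\ref{defwtrp} only: since $A^{\otimes m}$ is a finite simple C*-algebra, Condition~\eqref{WTRP-OUT-CO} is then automatic by Lemma~\ref{L_8814_TRP_Fin}. (Note $A$ is not of type~I, elementary C*-algebras not being tracially $\mathcal{Z}$-absorbing, so the tensor-product Lemmas~\ref{L_8808_Switch} and~\ref{L_8808_Dom} apply to~$A$.) Two preliminary reductions simplify the task. First, by Lemma~\ref{L_8809_UseAppId} it is enough to establish the three conditions when the element $y$ of Definition~\ref{defwtrp} has product form $y_0^{\otimes m}$ with $y_0\in A_+$, $\|y_0\|=1$: for an arbitrary target $y$ one chooses $y_0$ acting as an approximate local unit on the finitely many tensor legs of $y$, so that $(y-\dt)_+\in\overline{A^{\otimes m}\,y_0^{\otimes m}}$ for small $\dt$, and Lemma~\ref{L_8809_UseAppId} transports the property from $y_0^{\otimes m}$ to $(y-\dt)_+$ and hence to $y$. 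Second, in the construction below one has $f_{\sm}=\bt_{\sm}(f_e)$, so Condition~\eqref{WTRP-COM} for all $\sm$ follows from the case $\sm=e$ after replacing $F$ by the finite $\bt$-invariant set $\bigcup_{\sm\in S_m}\bt_{\sm}(F)$ and approximating its members by finite sums of elementary tensors.

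The construction is the combinatorial one indicated at the start of the section. I fix a (large) matrix size $N$ and a c.p.c.\ order zero map $\ph\colon M_N\to A$ produced by Lemma~\ref{L_8808_EltsSmall}. For a tuple $r=(r(1),\dots,r(m))\in\{1,\dots,N\}^m$ set $w_r=\ph(e_{r(1),r(1)})\otimes\cdots\otimes\ph(e_{r(m),r(m)})$, and let $\Om\subseteq\{1,\dots,N\}^m$ be the tuples with distinct coordinates, on which $S_m$ acts freely by $\bt_{\sm}(w_r)=w_{\sm\cdot r}$ with $(\sm\cdot r)(k)=r(\sm^{-1}(k))$. Choosing orbit representatives $\Om_0$ and putting $f_{\sm}=\sum_{r\in\Om_0}w_{\sm\cdot r}$, the order zero relation $\ph(e_{j,j})\ph(e_{k,k})=0$ ($j\ne k$) makes distinct $w_r$ orthogonal, so the $f_{\sm}$ are orthogonal positive contractions; and since $\bt_{\sm}$ merely permutes representatives, $\bt_{\sm}(f_{\ta})=f_{\sm\ta}$ holds \emph{exactly}, giving Condition~\eqref{WTRP-LTR} with error $0$. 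I record that $f:=\sum_{\sm}f_{\sm}=\sum_{r\in\Om}w_r\le\ph(1)^{\otimes m}$, with ``diagonal'' defect $\ph(1)^{\otimes m}-f=\sum_{r\notin\Om}w_r$.

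For Condition~\eqref{WTRP-COM} I would exploit the quantifier order in Lemma~\ref{L_8808_EltsSmall}: the size $N=n_0\nu$ is pinned down first (forced large by Condition~\eqref{WTRP-CU-EQ}), but the commutator tolerance $\rh$ there may then be taken arbitrarily small. For an elementary tensor $a=a_1\otimes\cdots\otimes a_m$ a telescoping estimate gives $\|[w_r,a]\|\le m\rh C$, whence $\|[f_e,a]\|\le|\Om_0|\,m\rh\,C<\ep$ once $\rh$ is small enough. The accumulation over the $\sim N^m$ summands is harmless precisely because $\rh$ is chosen after~$N$.

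The crux is Condition~\eqref{WTRP-CU-EQ}, i.e.\ $\big((y_0^2)^{\otimes m}-y_0^{\otimes m}fy_0^{\otimes m}-\ep\big)_+\precsim x$. Writing $f=\ph(1)^{\otimes m}-\sum_{r\notin\Om}w_r$ I split the defect as
\[
(y_0^2)^{\otimes m}-y_0^{\otimes m}fy_0^{\otimes m}
 =\big[(y_0^2)^{\otimes m}-(y_0\ph(1)y_0)^{\otimes m}\big]
  +\sum_{r\notin\Om}y_0^{\otimes m}w_r y_0^{\otimes m},
\]
split $x$ into two orthogonal positive pieces $x_1,x_2$, and bound the two summands by $x_1$ and $x_2$ respectively. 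The covering term is exactly the hypothesis of the domination Lemma~\ref{L_8808_Dom} (with its $x,a$ taken to be $y_0,\ph(1)$ and its target~$x_1$): that lemma returns $\dt>0$ and $z\in A_+\setminus\{0\}$, and the single-factor subequivalence $(y_0^2-y_0\ph(1)y_0-\dt)_+\precsim z$ is guaranteed by Lemma~\ref{L_8808_EltsSmall}\eqref{deftza-it1_Small} as soon as the auxiliary element fed to that lemma is $\precsim z$. The diagonal term is the main obstacle. Grouping the bad tuples by a coincident pair of legs, each group compresses to $(y_0\ph(1)y_0)^{\otimes(m-2)}\otimes\sum_{l=1}^{N}(y_0\ph(e_{l,l})y_0)^{\otimes 2}$, an orthogonal sum of $N$ two-fold tensors of the pieces $q_l=(y_0\ph(e_{l,l})y_0-\dt)_+$. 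By Lemma~\ref{L_8808_CommEqEst} the $q_l$ are mutually Cuntz comparable, and Lemma~\ref{L_8808_EltsSmall}\eqref{deftza-it3_Small} gives $\nu\langle q_l\rangle\le\langle a_0\rangle$ for the auxiliary element~$a_0$; feeding this into the switching Lemma~\ref{L_8808_Switch} converts the two independent factors of ``$\nu^{-1}$-smallness'' in $q_l^{\otimes 2}$ into enough room to absorb the single factor $N=n_0\nu$ from the count (and the $\sim N^{m-1}$ total of bad tuples), so that the whole diagonal term is $\precsim x_2$ once $\nu$ is large. The genuinely delicate point is to make one choice of auxiliary element $a_0$ (with $a_0\precsim z$, and small enough for the switching step) and one choice of $N=n_0\nu$ serve both the covering and diagonal estimates simultaneously; granting this, assembling the two bounds yields Condition~\eqref{WTRP-CU-EQ} and completes the proof.
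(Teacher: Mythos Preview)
Your proposal is essentially correct and follows the same blueprint as the paper: the same reduction via Lemma~\ref{L_8814_TRP_Fin} and Lemma~\ref{L_8809_UseAppId} to $y=y_0^{\otimes m}$, the same construction of the Rokhlin elements $f_\sigma$ from the rank-one images of an order zero map $\ph\colon M_N\to A$ (with exact permutation covariance $\bt_\sigma(f_\tau)=f_{\sigma\tau}$), and the same split of Condition~\eqref{WTRP-CU-EQ} into a ``covering'' piece handled by Lemma~\ref{L_8808_Dom} and a ``diagonal'' piece coming from tuples with a repeated coordinate.

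The one genuine difference is in the diagonal estimate. You group the bad tuples by a coincident pair of legs and exploit two tensor factors of smallness via Lemma~\ref{L_8808_EltsSmall}\eqref{deftza-it3_Small}; the paper instead bounds every bad tuple's contribution by the full $m$-fold tensor $\bigl[(y_0\ph(e_{1,1})y_0-4\ep_0)_+\bigr]^{\otimes m}$, shows $n_1\bigl\langle(y_0\ph(e_{1,1})y_0-4\ep_0)_+\bigr\rangle\le\langle c_0\rangle$ via Lemma~\ref{L_8825_DFSub}, and then uses the elementary count $\card(Q)\le n^m-(n-m)^m<n_1^m$ (arranged by the choice of $n_1$) so that the whole diagonal term is $\precsim c_0^{\otimes m}\precsim c$. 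This is cleaner than your route: once $c_0^{\otimes m}\precsim c$ is set up in advance via the Slice Lemma, the comparison in $A^{\otimes m}$ is automatic and no ``switching'' is needed at this stage. Your two-factor approach does work, but the step you pass over---choosing the auxiliary element $a_0$ so that $\binom{m}{2}$ copies of $(y_0^2)^{\otimes(m-2)}\otimes a_0^{\otimes 2}$ are dominated by $x_2$---is not a single application of Lemma~\ref{L_8808_Switch}; it requires an iteration (essentially repeating the mechanism inside the proof of Lemma~\ref{L_8808_Dom}) to trade the $m-2$ large factors $y_0^2$ against a sufficiently small $a_0$. That can be carried out, but it is more work than your sketch indicates, and the paper's $m$-factor bound sidesteps it entirely. (Incidentally, the paper does not invoke Lemma~\ref{L_8808_EltsSmall} as such: it applies the definition of tracial $\mathcal{Z}$-absorption directly and derives the smallness estimate inline, though the content is the same.)
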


\begin{proof}
We will make repeated use of the following estimate.
Suppose
\[
a_1, a_2, \ldots, a_m, b_1, b_2, \ldots, b_m \in A
\]
satisfy $\| a_j \| \leq 1$ and $\| b_j \| \leq 1$
for $j = 1, 2, \ldots, m$.
Then
\begin{equation}\label{Eq_8816_TensEst}
\bigl\| a_1 \otimes a_2 \otimes \dots \otimes a_m
  - b_1 \otimes b_2 \otimes \dots \otimes b_m \bigr\|
 \leq \sum_{j = 1}^m \| a_j - b_j \|.
\end{equation}

We verify the conditions in Lemma~\ref{L_8814_TRP_Fin}.
Thus, let $\varepsilon > 0$,
let $F \subseteq A^{\otimes m}$ be finite,
and let $x, y \in (A^{\otimes m})_{+}$ with $x \neq 0$.
We need to find
orthogonal positive contractions
$f_{\sm} \in A^{\otimes m}$ for $\sm \in S_m$ such that,
with $f = \sum_{\sm \in S_m} f_{\sm}$,
the following hold:
\begin{enumerate}
\item\label{T_8816_PermwTRP_COM}
$\| f_{\sm} a - a f_{\sm} \| < \varepsilon$
for all $a \in F$ and all $\sm \in S_m$.
\item\label{T_8816_PermwTRP_LTR}
$\| \bt_{\sm} (f_{\ta}) - f_{\sm \ta} \| < \varepsilon$
for all $\sm, \ta \in S_m$.
\item\label{T_8816_PermwTRP_CU_EQ}
$\bigl( y^{2} - y f y - \varepsilon \bigr)_{+} \precsim x$.
\setcounter{TmpEnumi}{\value{enumi}}
\end{enumerate}
It is clearly sufficient
(changing~$\ep$)
to prove this for $y$
in a dense subset of $(A^{\otimes m})_{+}$.
Combining this fact with Lemma~\ref{L_8809_UseAppId},
we see that we need only consider elements~$y$
in an approximate identity for $A^{\otimes m}$.
Thus,
we may assume that there is $y_0 \in A$ such that
\begin{equation}\label{Eq_8816_y0}
y_0 \geq 0,
\qquad
\| y_0 \| = 1,
\andeqn
y = y_0^{\otimes m}.
\end{equation}
We may further assume that there is a finite set $F_0 \subseteq A$
such that $\| a \| \leq 1$ for all $a \in F_0$ and
\begin{equation}\label{Eq_8816_DefF0}
F = \bigl\{ a_1 \otimes a_2 \otimes \dots \otimes a_m \colon
 a_1, a_2, \ldots, a_m \in F_0 \bigr\}.
\end{equation}

Use \cite[Lemma 2.4]{Ph14}
to choose nonzero orthogonal elements
$b, c \in \bigl( {\overline{x (A^{\otimes m}) x }} \bigr)_{+}$.

Use induction and Kirchberg's Slice Lemma (\cite[Lemma~4.1.9]{Ro02})
to find $t_1, t_2, \ldots, t_m \in A_{+} \setminus \{ 0 \}$
such that $t_1 \otimes t_2 \otimes \cdots \otimes t_m \precsim c$.
Use \cite[Lemma 2.6]{Ph14}
to choose $c_0 \in A_{+} \setminus \{ 0 \}$
such that $c_0 \precsim t_k$ for $k = 1, 2, \ldots, m$.
Then $c_0^{\otimes m} \precsim c$.
Without loss of generality,
$\| c_0 \| = 1$.

Define
\[
\ep_0 = \min \left( \frac{1}{2}, \, \frac{\ep}{32 m} \right)
\andeqn
y_1 = (y_0 - \ep_0)_{+}.
\]
Apply Lemma~\ref{L_8825_DFSub}
with $y_0$ in place of~$a$,
with $c_0$ in place of~$b$,
and with $\ep_0$ in place of~$\ep$,
getting $n_0 \in \N$.

We have
\[
\lim_{t \to \infty}
 \left[ n_0^m - \left( n_0 - \frac{m}{t} \right)^m \right]
= 0.
\]
Therefore there is $n_1 \in \N$ with $n_1 >m$ such that
\begin{equation}\label{Eq_8816_n1Est}
n_0^m - \left( n_0 - \frac{m}{n_1} \right)^m < 1.
\end{equation}
Define
\begin{equation}\label{Eq_8816_nDef}
n = n_1 n_0.
\end{equation}

Define
\[
\dt = \min \left( \frac{\ep_0}{n + 1}, \, \frac{\ep}{16 m n^m} \right).
\]
Apply Lemma~\ref{L_8808_CommEqEst}
with $\dt$ in place of~$\ep$,
getting a number $\rh_1 > 0$
(called $\dt$ there).

Use \cite[Lemma 2.5]{AP16} to choose $\rh_2 > 0$
such that whenever
$s, t \in A_{+}$
satisfy
\[
\| s \| \leq 1,
\qquad
\| t \| \leq 1,
\qquad {\mbox{and}} \qquad
\| s t - t s \| < \rh_2,
\]
then
$\bigl\| s^{1/2} t - t s^{1/2} \bigr\| < \dt$.
Apply Lemma~\ref{L_8808_Dom} with $b$ and~$m$ as given,
with $\frac{\ep}{2}$ in place of~$\ep$,
and with $y_0$ in place of~$x$,
getting $b_0  \in A_{+} \setminus \{ 0 \}$ (called $z$ there)
and $\rh_3 > 0$ (called $\dt$ there).
Set
\[
\rh = \min \left( \rh_1, \, \rh_2, \, \rh_3, \, \frac{\ep}{n^m} \right).
\]
\cite[Definition~3.6]{AGJP21}, now provides
a c.p.c.~order zero map $\varphi \colon M_{n} \to A$
such that:
\begin{enumerate}
\setcounter{enumi}{\value{TmpEnumi}}
\item\label{T_8816_PermwTRP_deftza_it1}
$\bigl( y_0^{2} - y_0 \varphi (1) y_0 - \rh \bigr)_{+}
 \precsim b_0$.
\item\label{T_8816_PermwTRP_deftza_it2}
$\| [\varphi (z), a] \| < \rh$
for any $z \in M_{n}$ with $\| z \| \leq 1$
and any $a \in F_0 \cup \{ y_0 \}$.
\end{enumerate}

Define $N = \{ 1, 2, \ldots, n \}$.
Define
\[
R = \bigl\{ r = \bigl( r (1), \, r (2), \, \ldots, \, r (m) \bigr)
   \in N^m \colon r (1) < r (2) < \cdots < r (m) \bigr\}.
\]
The group $S_m$ acts on $N^m$
by $(\sm \cdot r) (j) = r (\sm^{-1} (j))$
for $\sm \in S_m$,
$r = \bigl( r (1), \, r (2), \, \ldots, \, r (m) \bigr) \in N^m$,
and $j = 1, 2, \ldots, m$.
Using this action,
we see that $S_m \cdot R$
is the set of all $r \in N^m$ such that the numbers
$r (1), \, r (2), \, \ldots, \, r (m)$ are all distinct.
Define $Q = N^m \setminus S_m \cdot R$.
We have
\begin{equation}\label{Eq_8816_CardSmR}
\card (S_m \cdot R)
 = n (n - 1) (n - 2) \cdots (n - m + 1)
 \geq (n - m)^m,
\end{equation}
so
\begin{equation}\label{Eq_8816_CardQ}
\card (Q) \leq n^m -  (n - m)^m.
\end{equation}

To simplify notation,
for $k \in N$ define $g_k = \ph (e_{k, k})$,
and for $r \in N^m$ define
\[
g_r = g_{r (1)} \otimes g_{r (2)} \otimes \cdots \otimes g_{r (m)}.
\]
One checks that $\bt_{\sm} (g_r) = g_{\sm \cdot r}$
for $\sm \in S_m$ and $r \in N^m$.
Also, if $\sigma\cdot r= \sigma'\cdot r'$
for some $\sigma,\sigma' \in S_m$ and $r,r' \in R$, then
$\sigma =\sigma' $ and  $r=r'$.

For $\sm \in S_m$,
define
\[
f_{\sm} = \sum_{r \in R} g_{\sm \cdot r}.
\]
Since $\ph$ is a c.p.c.~order zero map,
$f_{\sm}$ is a positive contraction.
The sets $\sm \cdot R$ are disjoint,
so the elements $f_{\sm}$ are orthogonal.
Moreover,
for $\sm, \ta \in S_m$,
\[
\bt_{\sm} (f_{\ta})
 = \sum_{r \in R} \bt_{\sm} (g_{\ta \cdot r})
 = \sum_{r \in R} g_{\sm \cdot \ta \cdot r}
 = f_{\sm \ta}.
\]
So (\ref{T_8816_PermwTRP_LTR}) holds.

We check~(\ref{T_8816_PermwTRP_COM}).
Let $a_1, a_2, \ldots, a_m \in F_0$ and set
$a = a_1 \otimes a_2 \otimes \dots \otimes a_m$.
Let $r \in N^m$.
Then one checks that
\[
[ g_r, a ]
 = \sum_{j = 1}^m a_1 g_{r (1)} \otimes
       \cdots \otimes a_{j - 1} g_{r (j - 1)}
       \otimes [ g_{r (j)}, a_j] \otimes g_{r (j + 1)} a_{j + 1}
       \otimes
       \cdots \otimes g_{r (m)} a_m.
\]
Since $\| a_j \| \leq 1$ and $\| g_{r (j)} \| \leq 1$
for $j = 1, 2, \ldots, m$,
we have
$\| [ g_r, a ] \| < m \rh$ by~(\ref{T_8816_PermwTRP_deftza_it2}).
Therefore, for $\sm \in S_m$,
\[
\| [ f_{\sm}, a ] \|
  \leq \sum_{r \in \sm \cdot R} \| [ g_{r}, a ] \|
  < \card (R) m \rh
  \leq n^m \rh
  \leq \ep.
\]
This is~(\ref{T_8816_PermwTRP_COM}).

It remains to prove~(\ref{T_8816_PermwTRP_CU_EQ}),
which requires considerable work.
First,
the relations $\rh \leq \rh_3$
and
$\bigl( y_0^{2} - y_0 \varphi (1) y_0 - \rh \bigr)_{+}
 \precsim b_0$,
together with the choice of $b_0$ and $\rh_3$
using Lemma~\ref{L_8808_Dom},
imply that
\begin{equation}\label{Eq_8816_yytm}
\left( y^2 - y \ph (1)^{\otimes m} y - \tfrac{\ep}{2} \right)
  \precsim b.
\end{equation}

Next, for future reference,
at the second step use $\rh \leq \rh_2$,
the choice of $\rh_2$,
and~(\ref{T_8816_PermwTRP_deftza_it2}) to get, for any $k=1,\ldots,n$,
\begin{equation}\label{Eq_8825_FEst}
\bigl\| y_0 g_k y_0 - g_k^{1/2} y_0^2 g_k^{1/2} \bigr\|
 \leq 2 \bigl\| \bigl[ y_0, g_k^{1/2} \bigr] \bigr\|
 < 2 \dt.
\end{equation}

Applying~(\ref{T_8816_PermwTRP_deftza_it2}) to~$y_0$,
the inequality $\rh \leq \rh_1$
and the choice of $\rh_1$ using Lemma~\ref{L_8808_CommEqEst}
provide, for $k = 1, 2, \ldots, n$,
elements $v_k, w_k \in A$ such that
$\| v_k \|, \, \| w_k \| \leq 1$,
\begin{equation}\label{Eq_8817_vkwk}
\bigl\| v_k^* y_0 g_k y_0 v_k - y_0 g_1 y_0 \bigr\| < \dt,
\quad {\mbox{and}} \quad
\bigl\| w_k^* y_0 g_1 y_0 w_k - y_0 g_k y_0 \bigr\| < \dt.
\end{equation}

We claim that
\begin{equation}\label{Eq_8817_Sub1}
n_0 n_1 \langle ( y_0 g_1 y_0 - 4 \ep_0 )_{+} \rangle
 \leq \langle y_1 \rangle.
\end{equation}
To prove the claim,
use~(\ref{Eq_8825_FEst}) at the first step
and $\ph (1) = \sum_{k = 1}^n g_k$ at the second step to get
\begin{align*}
\biggl\| y_1 \ph (1) y_1
   - \sum_{k = 1}^n g_k^{1/2} y_0^2 g_k^{1/2} \biggr\|
& < \biggl\| y_1 \ph (1) y_1 - \sum_{k = 1}^n y_0 g_k y_0 \biggr\|
      + 2 n \dt
\\
& \leq 2 \| y_1 - y_0 \| + 2 n \dt
  \leq 2 \ep_0 + 2 n \dt.
\end{align*}
Since $g_1, g_2, \ldots, g_n$ are orthogonal,
it follows that
\begin{equation}\label{Eq_8817_SubSum}
\sum_{k = 1}^n \bigl\langle \bigl( g_k^{1/2} y_0^2 g_k^{1/2}
          - [2 \ep_0 + 2 n \dt] \bigr)_{+} \bigr\rangle
   \leq \langle y_1 \ph (1) y_1 \rangle
   \leq \langle y_1 \rangle.
\end{equation}
For $k = 1, 2, \ldots, n$,
using $(2 n + 1) \dt < 2 \ep_0$ at the first step,
using \cite[Lemma~1.4(6)]{Ph14} at the second step,
using (\ref{Eq_8817_vkwk}), $\| v_k \| \leq 1$,
and \cite[Lemma~2.3]{AGJP21}, at the third step,
we get
\begin{align*}
( y_0 g_1 y_0 - 4 \ep_0 )_{+}
& \precsim \bigl( y_0 g_1 y_0 - [(2 n + 1) \dt + 2 \ep_0] \bigr)_{+}
\\
& \sim \bigl( g_1^{1/2} y_0^2 g_1^{1/2}
     - [(2 n + 1) \dt + 2 \ep_0] \bigr)_{+}
\\
& \precsim \bigl( g_k^{1/2} y_0^2 g_k^{1/2}
     - [2 n \dt + 2 \ep_0] \bigr)_{+}.
\end{align*}
Summing over~$k$ and combining this with (\ref{Eq_8817_SubSum}) gives
$n \langle ( y_0 g_1 y_0 - 4 \ep_0 )_{+} \rangle
 \leq \langle y_1 \rangle$,
so the claim follows from~(\ref{Eq_8816_nDef}).

We next claim that
\begin{equation}\label{Eq_8817_cBound}
\bigl[ n^m - (n - m)^m \bigr]
 \bigl\langle \bigl[
     ( y_0 g_1 y_0 - 4 \ep_0 )_{+} \bigr]^{\otimes m} \bigr\rangle
 \leq \langle c \rangle.
\end{equation}
To prove this,
use (\ref{Eq_8817_Sub1})
and the choice of $n_0$ using Lemma~\ref{L_8825_DFSub}
to get
$n_1 \langle ( y_0 g_1 y_0 - 4 \ep_0 )_{+} \rangle
 \leq \langle c_0 \rangle$.
So
\[
n_1^m \bigl\langle \bigl[
     ( y_0 g_1 y_0 - 4 \ep_0 )_{+} \bigr]^{\otimes m} \bigr\rangle
 \leq \langle c_0^{\otimes m} \rangle
 \leq \langle c \rangle.
\]
The claim now follows by multiplying (\ref{Eq_8816_n1Est})
by $n_1^m$ and using (\ref{Eq_8816_nDef}) again.

Now we claim that for every $r \in N^m$, we have
\begin{equation}\label{Eq_8817_TPDom}
\left( g_{r (1)}^{1/2} y_0^2 g_{r (1)}^{1/2}
     \otimes g_{r (2)}^{1/2} y_0^2 g_{r (2)}^{1/2}
     \otimes \cdots
     \otimes g_{r (m)}^{1/2} y_0^2 g_{r (m)}^{1/2}
   - \frac{\ep}{4} \right)_{+}
  \precsim \bigl[ ( y_0 g_1 y_0 - 4 \ep_0 )_{+} \bigr]^{\otimes m}.
\end{equation}
We prove the claim.
Recalling the elements $w_k$ in~(\ref{Eq_8817_vkwk}),
for $r \in N^m$ define
\[
w_r = w_{r (1)} \otimes w_{r (2)} \otimes \cdots \otimes w_{r (m)}
 \in A^{\otimes m}.
\]
Then,
using (\ref{Eq_8816_TensEst}) at the first step
and (\ref{Eq_8825_FEst}) and \eqref{Eq_8817_vkwk} at the second step,
\begin{align*}
& \Bigl\| w_r^* \bigl[ ( y_0 g_1 y_0 - 4 \ep_0 )_{+} \bigr]^{\otimes m}
                                    w_r
\\
& \hspace*{6em} {\mbox{}}
        -  g_{r (1)}^{1/2} y_0^2 g_{r (1)}^{1/2}
     \otimes g_{r (2)}^{1/2} y_0^2 g_{r (2)}^{1/2}
     \otimes \cdots
     \otimes g_{r (m)}^{1/2} y_0^2 g_{r (m)}^{1/2} \Bigr\|
\\
& \hspace*{3em} {\mbox{}}
  \leq \sum_{j = 1}^m \Bigl[ \| w_{r (j)}^* \|
       \bigl\| ( y_0 g_1 y_0 - 4 \ep_0 )_{+} -y_0 g_1 y_0 \bigr\|
                  \| w_{r (j)} \|
\\
& \hspace*{6em} {\mbox{}}
          + \bigl\| w_{r (j)}^* y_0 g_1 y_0 w_{r (j)}
                            - y_0 g_{r (j)} y_0 \bigr\|
          + \bigl\| y_0 g_{r (j)} y_0
                - g_{r (j)}^{1/2} y_0^2 g_{r (j)}^{1/2} \bigr\| \Bigr]
\\
& \hspace*{3em} {\mbox{}}
  < m ( 4 \ep_0 + \dt + 2 \dt )
  = 4 m \ep_0 + 3 m \dt
  \leq \frac{\ep}{8} + \frac{\ep}{8}
  = \frac{\ep}{4}.
\end{align*}
The claim follows.

We now have,
using (\ref{Eq_8816_TensEst}) at the second step
and (\ref{Eq_8825_FEst}) at the third step,
%
\begin{align*}
&
\Biggl\| \sum_{r \in Q} g_{r (1)}^{1/2} y_0^2 g_{r (1)}^{1/2}
     \otimes g_{r (2)}^{1/2} y_0^2 g_{r (2)}^{1/2}
     \otimes \cdots
     \otimes g_{r (m)}^{1/2} y_0^2 g_{r (m)}^{1/2}
   - \sum_{r \in Q} y_0^{\otimes m} g_r y_0^{\otimes m} \Biggr\|
\\
& \hspace*{3em} {\mbox{}}
\leq \sum_{r \in Q} \Bigl\| g_{r (1)}^{1/2} y_0^2 g_{r (1)}^{1/2}
     \otimes g_{r (2)}^{1/2} y_0^2 g_{r (2)}^{1/2}
     \otimes \cdots
     \otimes g_{r (m)}^{1/2} y_0^2 g_{r (m)}^{1/2}
   - y_0^{\otimes m} g_r y_0^{\otimes m} \Bigr\|
\\
& \hspace*{3em} {\mbox{}}
\leq \sum_{r \in Q} \sum_{j = 1}^m
  \bigl\| y_0 g_{r (j)} y_0
                - g_{r (j)}^{1/2} y_0^2 g_{r (j)}^{1/2} \bigr\|
< 2 \card (Q) m \dt
\leq 2 m n^m \dt
\leq \frac{\ep}{4}.
\end{align*}

Therefore, using \cite[Corollary~1.6]{Ph14}
at the first step
and orthogonality of $g_1, g_2, \ldots, g_n$
at the second step,
\begin{align*}
& \Biggl( \sum_{r \in Q} y_0^{\otimes m} g_r y_0^{\otimes m}
   - \frac{\ep}{2} \Biggr)_{+}
\\
& \hspace*{3em} {\mbox{}}
  \precsim
  \Biggl( \sum_{r \in Q} g_{r (1)}^{1/2} y_0^2 g_{r (1)}^{1/2}
     \otimes g_{r (2)}^{1/2} y_0^2 g_{r (2)}^{1/2}
     \otimes \cdots
     \otimes g_{r (m)}^{1/2} y_0^2 g_{r (m)}^{1/2}
   - \frac{\ep}{4} \Biggr)_{+}
\\
& \hspace*{3em} {\mbox{}}
  = \sum_{r \in Q} \Bigl( g_{r (1)}^{1/2} y_0^2 g_{r (1)}^{1/2}
     \otimes g_{r (2)}^{1/2} y_0^2 g_{r (2)}^{1/2}
     \otimes \cdots
     \otimes g_{r (m)}^{1/2} y_0^2 g_{r (m)}^{1/2}
   - \frac{\ep}{4} \Bigr)_{+}.
\end{align*}
So,
using (\ref{Eq_8817_TPDom})
at the first step
and (\ref{Eq_8817_cBound}) and (\ref{Eq_8816_CardQ})
at the second step,
\[
\left\langle \Biggl( \sum_{r \in Q} y_0^{\otimes m} g_r y_0^{\otimes m}
   - \frac{\ep}{2} \Biggr)_{+} \right\rangle
\leq \card (Q) \left\langle
   \bigl[ ( y_0 g_1 y_0 - 4 \ep_0 )_{+} \bigr]^{\otimes m}
          \right\rangle
\leq \langle c \rangle.
\]

Finally, use
\cite[Lemma 1.5]{Ph14} to combine
this last inequality with
(\ref{Eq_8816_yytm})
and
$y \ph (1) y - f = \sum_{r \in Q} y_0^{\otimes m} g_r y_0^{\otimes m}$
to get
\begin{align*}
( y^2 - y f y - \ep )_{+}
& \precsim
   \left( y^2 - y \ph (1)^{\otimes m} y - \tfrac{\ep}{2} \right)
   \oplus \Biggl( \sum_{r \in Q} y_0^{\otimes m} g_r y_0^{\otimes m}
     - \frac{\ep}{2} \Biggr)_{+}
\\
&\precsim b \oplus c
 \precsim x.
\end{align*}
This is~(\ref{T_8816_PermwTRP_CU_EQ}),
and finishes the proof.
\end{proof}

The following result follows from Theorems~\ref{T_8816_PermwTRP}
and \ref{thm_fg}, and Corollary~\ref{Cor_intermediate}.

\begin{corollary}\label{cor_per_tz_fin}
Let $A$ be a simple  
tracially $\mathcal{Z}$-absorbing C*-algebra  and let
 $m\in\mathbb{N}$. Suppose that $A^{\otimes m}$ is finite
 in the sense of \cite[Definition~7.1]{AGJP21}. Then
 the crossed product and the fixed point algebra
are simple and tracially $\mathcal{Z}$-absorbing.
 If, in addition, $A$ is $\sigma$-unital,
 then all intermediate C*-algebras of the inclusions
$(A^{\otimes m})^\beta \subseteq A^{\otimes m}$ and 
$A^{\otimes m} \subseteq C^*(S_m, A^{\otimes m},\beta)$
are simple and tracially $\mathcal{Z}$-absorbing. 
\end{corollary}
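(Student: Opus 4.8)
The plan is to set $B = A^{\otimes m}$ and apply Theorem~\ref{thm_fg} and Corollary~\ref{Cor_intermediate} to the permutation action $\bt \colon S_m \to \Aut(B)$ of the finite group $S_m$. The three named ingredients combine as follows. Theorem~\ref{T_8816_PermwTRP}, whose hypotheses (simple tracially $\mathcal{Z}$-absorbing $A$ with $A^{\otimes m}$ finite) are exactly those assumed here, supplies the weak tracial Rokhlin property for $\bt$. Once we know that $B$ is a simple tracially $\mathcal{Z}$-absorbing C*-algebra, Theorem~\ref{thm_fg} applied to $\bt$ immediately gives that $C^*(S_m, A^{\otimes m}, \bt)$ and $(A^{\otimes m})^{\bt}$ are simple and tracially $\mathcal{Z}$-absorbing, which is the first assertion. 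For the second assertion we additionally need $B$ to be $\sigma$-unital; this holds whenever $A$ is, since one may tensor together sequential approximate identities of the factors. Then Corollary~\ref{Cor_intermediate}, applied to the same action $\bt$ on $B$, yields that every intermediate C*-algebra of $(A^{\otimes m})^{\bt} \subseteq A^{\otimes m}$ and of $A^{\otimes m} \subseteq C^*(S_m, A^{\otimes m}, \bt)$ is simple and tracially $\mathcal{Z}$-absorbing.

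The real content, therefore, is to verify that $B = A^{\otimes m}$ is itself simple and tracially $\mathcal{Z}$-absorbing. Simplicity of the minimal tensor power is already presupposed by the statement of Theorem~\ref{T_8816_PermwTRP}, since the weak tracial Rokhlin property of Definition~\ref{defwtrp} is only defined for actions on simple C*-algebras, so I would invoke the same simplicity fact used there. To see that $B$ is tracially $\mathcal{Z}$-absorbing, I would verify Definition~\ref{deftza} directly: given positive $x, a$ with $a \neq 0$, a finite set $F$, $\varepsilon > 0$, and $n \in \N$, reduce by density and standard approximate-identity arguments (cf.\ Lemma~\ref{L_8809_UseAppId}) to the case where $x$ and the elements of $F$ are elementary tensors; produce a c.p.c.~order zero map $\varphi \colon M_n \to A$ from tracial $\mathcal{Z}$-absorption of the first tensor factor; and set $\psi(z) = \varphi(z) \otimes c$, where $c$ is taken from a quasicentral approximate identity of $A^{\otimes(m-1)}$. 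Then condition~(2) of Definition~\ref{deftza} holds because $[\varphi(z), a_1]$ is small by the choice of $\varphi$ and $[c, a']$ is small by quasicentrality of $c$. The subtle point is condition~(1): one must bound $\bigl(x^2 - x\psi(1)x - \varepsilon\bigr)_+$ in the Cuntz order by $a$, and the naive estimate replaces $a$ by a tensor in which one factor blows up to a full, unit-like element. Controlling this requires the tensor-product Cuntz-comparison bookkeeping of Section~\ref{Sec_Perm} (in the spirit of Lemmas~\ref{L_8825_DFSub} and~\ref{L_8808_Dom}), together with Kirchberg's Slice Lemma to manufacture an element of the form $a_1 \otimes \cdots \otimes a_m \precsim a$ against which to compare.

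I expect this last point---the preservation of tracial $\mathcal{Z}$-absorption under the minimal tensor power---to be the main obstacle, since it is not formal: even for $\mathcal{Z}$-stable algebras the corresponding statement is trivial only because $\mathcal{Z}$-stability is itself tensorial, via $(A \otimes B) \otimes \mathcal{Z} \cong (A \otimes \mathcal{Z}) \otimes B$, whereas tracial $\mathcal{Z}$-absorption must be checked against arbitrary positive elements of the tensor power rather than against elementary tensors, where the blow-up described above is unavoidable without genuine comparison estimates. Once the simplicity and tracial $\mathcal{Z}$-absorption of $A^{\otimes m}$ are in hand, the assembly of Theorems~\ref{T_8816_PermwTRP} and~\ref{thm_fg} and Corollary~\ref{Cor_intermediate} described above is routine.
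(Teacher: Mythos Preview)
Your assembly of the three ingredients is exactly the paper's proof: Theorem~\ref{T_8816_PermwTRP} gives the weak tracial Rokhlin property for~$\bt$, and then Theorem~\ref{thm_fg} and Corollary~\ref{Cor_intermediate} apply to $A^{\otimes m}$ to give both conclusions. The only place you diverge is in the paragraph where you try to prove by hand that $A^{\otimes m}$ is tracially $\mathcal{Z}$-absorbing; this is unnecessary, since it is already \cite[Theorem~5.1]{AGJP21} (the paper itself invokes that result for the same purpose in the proof of Corollary~\ref{cor_per_tz}), so the ``main obstacle'' you anticipate does not arise.
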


The following result follows from the fact that every
finite group $G$ embeds into some permutation group
$S_m$, Theorem~\ref{T_8816_PermwTRP}, and \cite[Proposition~4.1]{FG17}.

\begin{corollary}\label{cor_per_any}
For any finite group $G$ and any finite, simple,
self-absorbing, and tracially $\mathcal{Z}$-absorbing C*-algebra $A$,
there is an action  $\alpha \colon G \to \mathrm{Aut} (A)$ with
the weak tracial Rokhlin property.
\end{corollary}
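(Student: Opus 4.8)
The plan is to derive the statement by combining three ingredients already at our disposal: Cayley's theorem, Theorem~\ref{T_8816_PermwTRP}, and the passage of the weak tracial Rokhlin property to subgroups. First I would set $m = \card (G)$ and invoke Cayley's theorem to fix an embedding $\iota \colon G \hookrightarrow S_m$ realizing $G$ as a subgroup of the symmetric group on $m$ letters. Using that $A$ is self-absorbing, I would then fix an isomorphism $\theta \colon A^{\otimes m} \to A$, obtained by iterating $A \otimes A \cong A$.

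Since $A$ is finite and $A^{\otimes m} \cong A$, the C*-algebra $A^{\otimes m}$ is finite, so the finiteness hypothesis of Theorem~\ref{T_8816_PermwTRP} is satisfied. That theorem then gives that the permutation action $\bt \colon S_m \to \Aut (A^{\otimes m})$ has the weak tracial Rokhlin property. Next I would restrict $\bt$ along $\iota$ to obtain an action $\bt \circ \iota \colon G \to \Aut (A^{\otimes m})$; by \cite[Proposition~4.1]{FG17}, the restriction to a subgroup of an action with the weak tracial Rokhlin property again has the weak tracial Rokhlin property, so $\bt \circ \iota$ has this property.

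Finally I would transport this action across $\theta$, defining $\alpha_g = \theta \circ (\bt \circ \iota)_g \circ \theta^{-1}$ for $g \in G$. The four conditions in Definition~\ref{defwtrp} are invariant under conjugation by a C*-algebra isomorphism (applying $\theta$ to the Rokhlin elements $f_g$ for $\bt \circ \iota$ produces the required elements for $\alpha$), so $\alpha \colon G \to \Aut (A)$ has the weak tracial Rokhlin property, as required.

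Since each step is a direct application of an already-established result, there is no substantive obstacle here. The only points requiring (routine) care are the verification of the finiteness hypothesis of Theorem~\ref{T_8816_PermwTRP}, which is immediate from $A^{\otimes m} \cong A$ together with finiteness of $A$, and the check that conjugation by $\theta$ preserves each condition of Definition~\ref{defwtrp}; the latter is straightforward since an isomorphism preserves norms, commutators, positivity, orthogonality, and Cuntz subequivalence.
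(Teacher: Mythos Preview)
Your proposal is correct and follows essentially the same approach as the paper: the paper's proof simply cites Cayley's theorem, Theorem~\ref{T_8816_PermwTRP}, and \cite[Proposition~4.1]{FG17}, and you have supplied the expected details (verifying finiteness of $A^{\otimes m}$ via self-absorption and transporting the action back to $A$ through an isomorphism $\theta$).
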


\begin{proposition}\label{prop_outer}
Let $A$ be a simple  
purely infinite C*-algebra and let
 $m\in\mathbb{N}$. Then
 the permutation action $\bt \colon S_m \to \Aut (A^{\otimes m})$
 is pointwise outer and the crossed product and the fixed
 point algebra are simple and purely infinite.
 If, in addition, $A$ is $\sigma$-unital,
 then all intermediate C*-algebras of the inclusions
$(A^{\otimes m})^\beta \subseteq A^{\otimes m}$ and 
$A^{\otimes m} \subseteq C^*(S_m, A^{\otimes m},\beta)$
are simple and purely infinite. 
\end{proposition}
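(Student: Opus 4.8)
The plan is to reduce the whole statement, exactly as in the proof of Corollary~\ref{Cor_intermediate}, to two facts: that $A^{\otimes m}$ is simple and purely infinite, and that $\bt$ is pointwise outer. The first is standard. Since $A$ is simple and purely infinite it is infinite dimensional and not of type~I, and the minimal tensor product of purely infinite simple C*-algebras is again purely infinite and simple (see \cite{Ro02}); hence $A^{\otimes m}$ is simple and purely infinite. Granting pointwise outerness, Kishimoto's theorem \cite[Theorem~3.1]{Ki81} gives that $C^{*}(S_m, A^{\otimes m}, \bt)$ is simple, and the purely infinite simple statement for pointwise outer finite group actions (\cite[Theorem~4.5]{JngOsk}, together with the permanence properties of pure infiniteness in \cite{KR00} to cover the nonunital case) gives that it is purely infinite. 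The fixed point algebra $(A^{\otimes m})^{\bt}$ is isomorphic to a corner of the crossed product by \cite{Ro79}, which is full by simplicity, so it too is simple and purely infinite; note that this first part needs no $\sigma$-unitality.

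The heart of the matter, and the step I expect to be the main obstacle, is to show that $\bt_{\sm}$ is outer for every $\sm \in S_m \setminus \{ e \}$. One cannot route this through the weak tracial Rokhlin property, since the finiteness hypothesis of Theorem~\ref{T_8816_PermwTRP} fails here. I would instead argue with slice maps. Fix an irreducible representation $\pi \colon A \to B (H)$. Since $A$ is simple, $\pi$ is faithful; and since $A$ is not of type~I, $\pi (A) \cap \mathcal{K} (H) = 0$ (otherwise $\pi (A) \cap \mathcal{K} (H)$ would be a nonzero ideal of the simple algebra $\pi (A)$, forcing $\pi (A) = \mathcal{K} (H)$ and $A \cong \mathcal{K} (H)$). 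The tensor power $\pi^{\otimes m} \colon A^{\otimes m} \to B (H^{\otimes m})$ is again irreducible, with faithful image $\pi (A)^{\otimes_{\min} m}$ and trivial commutant. If $\bt_{\sm} = \mathrm{Ad} (u)$ for a unitary $u \in M (A^{\otimes m})$, then $\pi^{\otimes m} (u)$ and the permutation unitary $W_{\sm}$ on $H^{\otimes m}$ implement the same automorphism of $\pi^{\otimes m} (A^{\otimes m})$, so $W_{\sm}^* \pi^{\otimes m} (u)$ lies in the trivial commutant and $\pi^{\otimes m} (u) = \ld W_{\sm}$ for some $\ld \in \mathbb{T}$.

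For $y, z \in A^{\otimes m}$ the element $y u z$ lies in $A^{\otimes m}$, so $\pi^{\otimes m} (y u z) = \ld\, \pi^{\otimes m} (y) W_{\sm} \pi^{\otimes m} (z)$ lies in $\pi (A)^{\otimes_{\min} m}$. Because $\sm \neq e$ moves some tensor factor, choosing elementary tensors $y$ and~$z$ and applying vector functionals to all but one factor (a Tomiyama slice map, which carries $\pi (A)^{\otimes_{\min} m}$ into $\pi (A)$) produces a nonzero rank-one operator lying in $\pi (A)$; in the transposition case this is transparent, since $\pi^{\otimes 2}((a \otimes b) u (c \otimes d)) = \ld\,(\pi (a) \pi (d) \otimes \pi (b) \pi (c)) W_{\sm}$, whose slice in the second variable is rank one. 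This contradicts $\pi (A) \cap \mathcal{K} (H) = 0$ and shows $\bt_{\sm}$ is outer. (Alternatively, one may simply invoke any known result that permutation actions on tensor powers of non-type-I simple C*-algebras are pointwise outer.)

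Finally, for the intermediate C*-algebras I would follow Corollary~\ref{Cor_intermediate} verbatim. Assuming now that $A$, hence $A^{\otimes m}$, is $\sigma$-unital, pointwise outerness lets me apply Izumi's Galois correspondence \cite[Corollary~6.6]{Izumi2002}: every $D$ with $A^{\otimes m} \subseteq D \subseteq C^{*} (S_m, A^{\otimes m}, \bt)$ equals $C^{*} (H, A^{\otimes m}, \bt|_H)$ for a subgroup $H \leq S_m$, and every $B$ with $(A^{\otimes m})^{\bt} \subseteq B \subseteq A^{\otimes m}$ is the fixed point algebra of such a restricted action. Each $\bt|_H$ is again pointwise outer, since outerness of every nontrivial $\bt_{\sm}$ is inherited by the subgroup, so the first part of the argument applies to show that all these $B$ and~$D$ are simple and purely infinite.
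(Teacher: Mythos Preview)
Your overall architecture matches the paper's: prove pointwise outerness, then cite a result giving simplicity and pure infiniteness of the crossed product, pass to the fixed point algebra via Rosenberg's corner argument, and finally use Izumi's Galois correspondence for the intermediate algebras. The last two steps are essentially identical to the paper.

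Where you genuinely diverge is in the proof of pointwise outerness. The paper's argument is short and algebraic: since $A$ is purely infinite simple it has a nonzero projection $q$, so $p = q^{\otimes m}$ is a $\bt$-invariant projection; if $\bt_{\sm} = \mathrm{Ad}(u)$ then $v = pup$ is a unitary in $(qAq)^{\otimes m}$ implementing the permutation on the unital purely infinite simple corner $B = qAq$, and Sakai's theorem \cite{Sakai1975} forces $B$ to be a matrix algebra, a contradiction. Your representation-theoretic approach via an irreducible $\pi$ and slice maps is also correct, and in fact proves more --- it shows outerness of the permutation action for any non-type-I simple $A$, without needing projections --- but it is longer to write out carefully, and your sketch only treats the transposition case explicitly. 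For a general $\sm$ moving position~$1$, the computation of the slice of $(\pi(a_1)\pi(c_{\sm^{-1}(1)}) \otimes \cdots) W_{\sm}$ does produce a rank-one operator in $\pi(A)$, but this deserves a line or two more than ``choosing elementary tensors and applying vector functionals''. The paper's route trades that bookkeeping for an appeal to Sakai, at the cost of using the existence of projections (hence really using pure infiniteness).

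One small citation issue: for the crossed product being simple and purely infinite you invoke \cite[Theorem~4.5]{JngOsk}, but that result is stated for unital $A$; your patch ``together with \cite{KR00}'' is not a clean reduction. The paper instead cites \cite[Theorem~3]{Jeong1995}, which directly covers pointwise outer finite group actions on (possibly nonunital) purely infinite simple C*-algebras. You should use that reference.
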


\begin{proof}
Suppose that there is $\sigma\in S_m\setminus \{1\}$
and a unitary $u$ in the multiplier algebra of $A^{\otimes m}$
such that $\beta_\sigma(c)=ucu^*$ for all $c\in A^{\otimes m}$.
Let $q\in A$ be any nonzero projection and put
$p=q^{\otimes m}$ and $B=qAq$. Then $p$ is a $\beta$-invariant projection
and so $upu^*=\beta_\sigma(p)=p$. Thus $pup=pu=up$. Put
$v=pup$ which is a unitary in $B^{\otimes m}$ and
$\beta_\sigma(c)=vcv^*$ for all $c\in B^{\otimes m}$.
Now, Sakai's result \cite[Theorem~7]{Sakai1975} implies that 
$B$ is isomorphic
to some full matrix algebra $M_n$.
(Note that in the Sakai's result, it seems that
the assumption of the unitality is implicit as
$K(H)$ also satisfies the conclusion of that result.
However, his argument can be modified for the nonunital
case to cover the algebra $K(H)$.)
This is a contradiction as $B$ is purely infinite.

By \cite[Theorem~3]{Jeong1995},
pointwise outernesss of $\beta$ implies that
the crossed product $ C^*(S_m, A^{\otimes m},\beta)$
is simple and purely infinite. Since the fixed
point algebra $(A^{\otimes m})^\beta$ is isomorphic
to a full corner of $ C^*(S_m, A^{\otimes m},\beta)$,
it is is simple and purely infinite.
If $A$ is $\sigma$-unital,
the same conclusion holds for all
intermediate C*-algebras of the inclusions
$(A^{\otimes m})^\beta \subseteq A^{\otimes m}$ and 
$A^{\otimes m} \subseteq C^*(S_m, A^{\otimes m},\beta)$,
since, by \cite[Corollary~6.6]{Izumi2002}, they are, 
respectively,
 fixed point algebras and crossed products
 of the restriction of the action 
 $\bt \colon S_m \to \Aut (A^{\otimes m})$ to 
 subgroups of $S_m$.
\end{proof}

\begin{corollary}\label{cor_per_tz}
Let $A$ be a simple separable exact tracially $\mathcal{Z}$-absorbing
C*-algebra. Let $m\in\mathbb{N}$ and consider
 the permutation action $\bt \colon S_m \to \Aut (A^{\otimes m})$.
 Then  all intermediate C*-algebras of the inclusions
$(A^{\otimes m})^\beta \subseteq A^{\otimes m}$ and 
$A^{\otimes m} \subseteq C^*(S_m, A^{\otimes m},\beta)$
are simple and tracially $\mathcal{Z}$-absorbing. 
If, in addition, $A$ is nuclear then 
all these intermediate C*-algebras 
 are   nuclear and $\mathcal{Z}$-stable.
\end{corollary}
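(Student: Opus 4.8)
The plan is to reduce to the two horns of the stably finite / purely infinite dichotomy of \cite{FLL21} and then invoke a result already available in each case: Proposition~\ref{prop_outer} on the purely infinite side and Corollary~\ref{cor_per_tz_fin} on the stably finite side. Since $A$ is tracially $\mathcal{Z}$-absorbing it is tracially approximately divisible, so $A$ is a simple separable exact tracially approximately divisible C*-algebra and the dichotomy of \cite{FLL21} applies. First I would use it to conclude that $A$ is \emph{either} stably finite \emph{or} purely infinite.

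In the purely infinite case $A$ is simple and purely infinite, and it is $\sigma$-unital since it is separable, so Proposition~\ref{prop_outer} applies verbatim and shows that all the intermediate C*-algebras of both inclusions are simple and purely infinite. As the definitions are arranged so that every simple purely infinite C*-algebra is tracially $\mathcal{Z}$-absorbing, this yields the first assertion in this case. In the stably finite case $A$ is simple, exact, and stably finite, so by the remark following Theorem~\ref{thmxB} the tensor power $A^{\otimes m}$ is finite in the sense of \cite[Definition~7.1]{AGJP21}; since $A$ is also separable, hence $\sigma$-unital, Corollary~\ref{cor_per_tz_fin} applies and shows that all the intermediate C*-algebras are simple and tracially $\mathcal{Z}$-absorbing. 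Together the two cases give the first assertion.

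For the second assertion, assume in addition that $A$ is nuclear, so $A^{\otimes m}$ is separable and nuclear. By Izumi's Galois correspondence \cite[Corollary~6.6]{Izumi2002}, used exactly as in the proof of Corollary~\ref{Cor_intermediate}, each intermediate C*-algebra is either a crossed product $C^*(H, A^{\otimes m}, \beta|_H)$ or a fixed point algebra $(A^{\otimes m})^{\beta|_H}$ for some subgroup $H$ of $S_m$; since $H$ is finite and $A^{\otimes m}$ is nuclear, every such crossed product is nuclear and every such fixed point algebra is nuclear as a full corner of one. Hence every intermediate C*-algebra is simple, separable, and nuclear. In the purely infinite case it is moreover purely infinite, hence a Kirchberg algebra and so $\mathcal{Z}$-stable; in the stably finite case it is tracially $\mathcal{Z}$-absorbing, hence $\mathcal{Z}$-stable by the equivalence of tracial $\mathcal{Z}$-absorption and $\mathcal{Z}$-stability for simple separable nuclear C*-algebras \cite{HO13, CLS2021}. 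This gives the second assertion.

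The work here is organizational rather than technical, and the hard part will be making sure each horn of the \cite{FLL21} dichotomy is matched to a genuinely applicable tool. The points to be careful about are: that tracial $\mathcal{Z}$-absorption really does imply tracial approximate divisibility, so that \cite{FLL21} applies; that the finiteness hypothesis of Corollary~\ref{cor_per_tz_fin} is supplied precisely by the implication ``$A$ simple, exact, and stably finite $\Rightarrow$ $A^{\otimes m}$ finite''; and that separability is what provides the $\sigma$-unitality needed to reach the intermediate-algebra conclusions in both Proposition~\ref{prop_outer} and Corollary~\ref{cor_per_tz_fin}.
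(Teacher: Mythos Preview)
Your proposal is correct and follows the same overall architecture as the paper's proof: invoke the \cite{FLL21} dichotomy, handle the purely infinite branch via Proposition~\ref{prop_outer}, and handle the other branch via Corollary~\ref{cor_per_tz_fin}; then deduce nuclearity of intermediate algebras from Izumi's Galois correspondence and $\mathcal{Z}$-stability from \cite{CLS2021}.

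There is one organizational difference worth noting. In the non--purely-infinite branch you pass directly from ``$A$ simple, exact, stably finite'' to ``$A^{\otimes m}$ finite'' by quoting the remark after Theorem~\ref{thmxB}. The paper instead applies the \cite{FLL21} dichotomy a second time, now to $A^{\otimes m}$ (which is tracially $\mathcal{Z}$-absorbing by \cite[Theorem~5.1]{AGJP21}), and then must rule out the mixed case ``$\mathrm{sr}(A)=1$ but $A^{\otimes m}$ purely infinite.'' It does so by the trace argument: stable rank one gives a densely defined quasitrace (via \cite{BlkCnt1982, BlkHdm1982}), exactness upgrades it to a trace, and $\tau^{\otimes m}$ obstructs pure infiniteness of $A^{\otimes m}$. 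Your route is shorter because it outsources precisely this trace argument to the remark you cite; the paper's route is more self-contained. Both are valid.
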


\begin{proof}
Since $A$ is simple, separable, and  tracially $\mathcal{Z}$-absorbing,
either $A$ is purely infinite or $sr(A)=1$, by 
\cite[Theorem~4.11 and Corollary~6.5]{FLL21}.
If $A$ is purely infinite, the first part of the
 statement follows from
Proposition~\ref{prop_outer}. In the sequel, suppose that
$A$ has stable rank one.

Since $A$ is tracially $\mathcal{Z}$-absorbing,
so is $A^{\otimes m}$, by \cite[Theorem~5.1]{AGJP21}.
Then, again by \cite[Theorem~4.11 and Corollary~6.5]{FLL21},
either $A^{\otimes m}$ is purely infinite or $sr(A^{\otimes m})=1$.
If  $sr(A^{\otimes m})=1$, then $A^{\otimes m}$ is finite
in the sense of \cite[Definition~7.1]{AGJP21}
(that is, its unitization is finite), and hence
 the first part of the statement follows from 
 Corollary~\ref{cor_per_tz_fin}.
 
 To complete the proof of the first part of the statement,
it is enough to show that if  $A$ has stable rank one,
then $A^{\otimes m}$ cannot be purely infinite.
Recall that a simple C*-algebra (unital or not) admits a densely 
defined quasitrace  if and only if no matrix algebra over it 
contains an infinite projection \cite{BlkCnt1982, BlkHdm1982}.
Since $A$ is simple 
with $sr(A)=1$, this implies that $A$ has a densely defined 
quasitrace $\tau$, which is actually
a densely defined trace, by the exactness of $A$. 
Now, $\tau^{\otimes m}$ is a densely defined
 trace on $A^{\otimes m}$ (whose domain is 
 the algebraic tensor product of $m$ copies of $A$). 
 So, $A^{\otimes m}$ does not have any infinite
projection, and hence it is not purely infinite.

 For the second part of the statement,
 the nuclearity of the intermediate C*-algebras follows from 
Izumi's   Galois correspondence for
  intermediate C*-algebras 
  \cite[Corollary~6.6]{Izumi2002}, and their
   $\mathcal{Z}$-stability follows from \cite[Theorem~A]{CLS2021}.
\end{proof}

\section{Integer actions}\label{sec_int}

\indent
In this section we show---under a mild additional assumption---that
(nonunital)
tracial $\mathcal{Z}$-absorption
passes to crossed products by automorphisms
with the weak tracial Rokhlin property,
as in Definition~\ref{defwtrpaut} below.
(See Theorem~\ref{thm_int} below.)
This result is the nonunital version
of \cite[Theorem 6.7]{HO13}.
We weaken
the assumption in \cite[Theorem 6.7]{HO13}
that $\alpha^{m}$ acts trivially on $T (A)$ for some
$m \in \mathbb{N}$,
and we don't need separability.

We point out that
the proof of \cite[Theorem 6.7]{HO13} isn't valid without exactness,
or at least without assuming that
every quasitrace is a trace.
Strict comparison is not defined in \cite{HO13},
so it isn't clear whether the intention there is
to use traces or quasitraces.
The version using quasitraces is needed
for the part of \cite[Theorem 3.3]{HO13}
that says ``and therefore $A$ has strict comparison'',
and version using traces is needed
at the end of the third last paragraph of
the proof of \cite[Lemma 6.6]{HO13}.

First we extend the definition of the (weak) tracial Rokhlin property
for actions of
$\mathbb{Z}$ to the nonunital case (cf.~\cite[Definition~1.1]{OP06}).
The analogous definition in~\cite{HO13}, Definition~6.1 there,
asks for orthogonal positive contractions
$e_{1}, e_{2}, \ldots, e_{n}$,
but we use the more conventional indexing
$e_{0}, e_{1}, \ldots, e_{n}$.

\begin{definition}\label{defwtrpaut}
Let $A$ be a simple C*-algebra and let $\alpha \in \mathrm{Aut} (A)$.
We say that $\alpha$ has the \emph{weak tracial Rokhlin property}
if for every finite set $F \subseteq A$, every $\varepsilon > 0$,
every $n \in \mathbb{N}$,
and every $x, y \in A_{+}$ with $\| x \| = 1$,
there exist orthogonal positive contractions
$e_{0}, e_{1} \ldots, e_{n}$ in $A$ such that,
with $e = \sum_{j = 0}^{n} e_{j}$, the following hold:
\begin{enumerate}
\item\label{defwtrpaut_it1}
$\| \alpha (e_{j}) - e_{j + 1} \| < \varepsilon$
for $j = 0, 1, \ldots, n - 1$.
\item\label{defwtrpaut_it2}
$\| [e_{j}, b] \| \leq \varepsilon$
for $j = 0, 1, \ldots, n$ and all $b \in F$.
\item\label{defwtrpaut_it3}
$\big( y^{2} - y e y - \varepsilon \big)_{+} \precsim x$.
\item\label{defwtrpaut_it4}
$\|e x e\| > 1 - \varepsilon$.
\end{enumerate}
\end{definition}

\begin{proposition}\label{P_8615_WeakZToGenZ}
Let $A$ be a simple unital C*-algebra
and let $\alpha \in \mathrm{Aut} (A)$.
\begin{enumerate}[label=$\mathrm{(\arabic*)}$]
\item\label{P_8615_WeakZToGenZ_ToGen}
If $\alpha$ has the weak tracial Rokhlin property,
then $\alpha$ has the generalized tracial Rokhlin property
of \cite[Definition 6.1]{HO13}.
\item\label{P_8615_WeakZToGenZ_ToWk}
If $A$ is finite
and $\alpha$ has the generalized tracial Rokhlin property
of \cite[Definition 6.1]{HO13},
then $\alpha$ has the weak tracial Rokhlin property.
\end{enumerate}
\end{proposition}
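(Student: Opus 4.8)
The plan is to mirror the two directions of the proof of Proposition~\ref{P_8615_WeakToGen}, replacing the finite group $G$ by the linearly ordered index set $\{0, 1, \ldots, n\}$ and the family of left translations $h \mapsto gh$ by the single forward shift furnished by $\alpha$. Recall that the generalized tracial Rokhlin property of \cite[Definition~6.1]{HO13} produces orthogonal positive contractions $e_0, \ldots, e_n$ with $\|e_j\| = 1$ for all $j$, approximately commuting with the prescribed finite set, satisfying the intertwining estimate $\|\alpha(e_j) - e_{j+1}\| < \varepsilon$, and with $1 - e \precsim a$ where $e = \sum_{j=0}^n e_j$ --- the exact analogue for a single automorphism of \cite[Definition~5.2]{HO13}. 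The structural difference from Definition~\ref{defwtrpaut} is that the Cuntz condition~\eqref{defwtrpaut_it3} and the norm condition~\eqref{defwtrpaut_it4} are replaced by the normalization $\|e_j\| = 1$ together with the single subequivalence $1 - e \precsim a$.

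For~\ref{P_8615_WeakZToGenZ_ToGen}, I would reuse the two continuous functions
\[
k(t) = \begin{cases} 3t & 0 \le t \le \tfrac{1}{3}, \\ 1 & \tfrac{1}{3} < t \le 1, \end{cases}
\qquad
l(t) = \begin{cases} 0 & 0 \le t \le \tfrac{2}{3}, \\ 3t - 2 & \tfrac{2}{3} < t \le 1, \end{cases}
\]
together with the same $\delta_1$ from \cite[Lemma~2.5.11(2)]{LnBook} and $\delta_2$ from \cite[Lemma~2.5]{AP16} controlling how $k$ acts on approximate equalities and approximate commutators. Given $\varepsilon$, a finite set $F$, an integer $n$, and $a \in A_+ \setminus \{0\}$, I would apply Definition~\ref{defwtrpaut} with $x = \|a\|^{-1} a$, with $y = 1$, and with $\varepsilon_0 = \min\!\big(\delta_1, \delta_2, \tfrac{1}{3n}\big)$ in place of $\varepsilon$, obtaining $e_0, \ldots, e_n$ and $e = \sum_j e_j$. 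Condition~\eqref{defwtrpaut_it4} and $\varepsilon_0 \le \tfrac{1}{3}$ give $\|e x e\| > \tfrac{2}{3}$, hence $\|e\| > (\tfrac{2}{3})^{1/2} > \tfrac{2}{3}$; since the $e_j$ are orthogonal, $\|e\| = \max_j \|e_j\|$, so $\|e_{j_0}\| > \tfrac{2}{3}$ for some $j_0$. Putting $\tilde e_j = k(e_j)$ (orthogonal positive contractions with $\tilde e := \sum_j \tilde e_j = k(e)$), the commutation and intertwining estimates transfer by the choices of $\delta_1, \delta_2$, using that $\alpha$ commutes with continuous functional calculus. Finally, as in the group case, $1 - \tilde e = l(1 - e) \sim (1 - e - \tfrac{2}{3})_+ \le (1 - e - \varepsilon_0)_+ \precsim x \sim a$, the last subequivalence being~\eqref{defwtrpaut_it3} with $y = 1$.

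The single genuine difference from the group case, and the step I expect to be the main (though mild) obstacle, is upgrading $\|e_{j_0}\| > \tfrac{2}{3}$ to $\|\tilde e_j\| = 1$ for every $j$. In the group setting one application of the translation condition suffices because $g \mapsto g g_0$ is a bijection; here the tower is a chain, so $\|\alpha(e_j) - e_{j+1}\| < \varepsilon_0$ together with isometry of $\alpha$ only yields $\big|\,\|e_{j+1}\| - \|e_j\|\,\big| < \varepsilon_0$, and propagating from $j_0$ to an arbitrary index accumulates an error of up to $n \varepsilon_0$. This is exactly why I take $\varepsilon_0 \le \tfrac{1}{3n}$: then $\|e_j\| > \tfrac{2}{3} - n \varepsilon_0 \ge \tfrac{1}{3}$ for all $j$, so $k$ pushes each $\|e_j\|$ up to $1$ and delivers the norm-one requirement of \cite[Definition~6.1]{HO13}.

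For~\ref{P_8615_WeakZToGenZ_ToWk}, I would follow the second half of the proof of Proposition~\ref{P_8615_WeakToGen} verbatim, now with the chain indexing. Given $\varepsilon$, $F$, $n$, and $x, y \in A_+$ with $\|x\| = 1$, \cite[Lemma~2.9]{Ph14} supplies $z \in (\overline{x A x})_+ \setminus \{0\}$ such that $\|(1 - c) x (1 - c)\| > 1 - \varepsilon$ whenever $0 \le c \le 1$ and $c \precsim z$. Applying the generalized tracial Rokhlin property with $z$ in place of $a$ gives $e_0, \ldots, e_n$ with $e = \sum_j e_j$, $1 - e \precsim z$, and the commutation and intertwining estimates, which are precisely~\eqref{defwtrpaut_it2} and~\eqref{defwtrpaut_it1}. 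Then
\[
\big( y^2 - y e y - \varepsilon \big)_+ \le y (1 - e) y \sim (1 - e)^{1/2} y^2 (1 - e)^{1/2} \le \|y\|^2 (1 - e) \precsim z \precsim x
\]
gives~\eqref{defwtrpaut_it3}, and applying the defining property of $z$ with $c = 1 - e$ gives $\|e x e\| > 1 - \varepsilon$, which is~\eqref{defwtrpaut_it4}. The finiteness hypothesis enters exactly as in the corresponding part of Proposition~\ref{P_8615_WeakToGen}, and no ingredient beyond that proof is needed.
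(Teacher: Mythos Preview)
Your proposal is correct and follows essentially the same approach as the paper: for~\ref{P_8615_WeakZToGenZ_ToGen} both arguments recognize that the chain structure forces the error to accumulate linearly in~$n$ when propagating the norm lower bound from some~$e_{j_0}$ to all~$e_j$, and both absorb this by taking $\varepsilon_0$ of order~$1/n$ (the paper uses $\tfrac{1}{2(n+1)}$ and a slightly sharper starting bound $\|f_{j_0}\|>1-\tfrac{1}{2(n+1)}$, while you use $\tfrac{1}{3n}$ and $\|e_{j_0}\|>\tfrac{2}{3}$, but either choice lands every $\|e_j\|$ above~$\tfrac{1}{3}$); for~\ref{P_8615_WeakZToGenZ_ToWk} both simply repeat the second half of Proposition~\ref{P_8615_WeakToGen} verbatim.
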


\begin{proof}
The proof is similar to
that of Proposition~\ref{P_8615_WeakToGen},
using $\{ 1, 2, \ldots, n \}$ in place of~$G$.

For~\ref{P_8615_WeakZToGenZ_ToGen},
for given $\ep > 0$,
$n \in \N$,
$F \subseteq A$ finite,
and $a \in A_{+} \setminus \{ 0 \}$,
we will ask for
orthogonal positive contractions $e_{0}, e_{1}, \ldots, e_{n} \in A$
satisfying the conditions of \cite[Definition 6.1]{HO13},
rather than $e_{1}, e_{2}, \ldots, e_{n}$;
since $n$ is arbitrary,
this is equivalent.

Choose $\dt_1, \dt_2 > 0$ as in the proof of
Proposition \ref{P_8615_WeakToGen}\ref{P_8615_WeakToGen_ToGen}.
Set
\[
\ep_0 = \min \left( \dt_1, \, \dt_2, \, \frac{1}{2 (n + 1)} \right).
\]
Define $x$ and~$y$ as
in the proof of
Proposition \ref{P_8615_WeakToGen}(\ref{P_8615_WeakToGen_ToGen}).
Apply Definition~\ref{defwtrpaut}
with $\varepsilon_0$ in place of $\varepsilon$ and
with $x,y,F,n$ as given,
getting
$f_0, f_1, \ldots, f_n$ (called
$e_0, e_1, \ldots, e_n$ in Definition~\ref{defwtrpaut}),
and define $f = \sum_{j = 0}^{n} f_{j}$.
Then
\[
\| f \|
 > \sqrt{1 - \ep_0}
 \geq \sqrt{1 - \frac{1}{2 (n + 1)}}
 \geq 1 - \frac{1}{2 (n + 1)}.
\]
Therefore there is $j_0 \in \{ 0, 1, \ldots, n \}$
such that
\[
\| f_{j_0} \| > 1 - \frac{1}{2 (n + 1)}.
\]
Using Definition~\ref{defwtrpaut}\eqref{defwtrpaut_it1}
and induction,
for $j = 0, 1, \ldots, n$ we get
\[
\| f_j \| > 1 - \frac{1}{2 (n + 1)} - \frac{| j - j_0 |}{2 (n + 1)}.
\]
In particular,
$\| f_j \| > \frac{1}{2} > \frac{1}{3}$ for all~$j$.
{}From here,
finish as in the proof of
Proposition \ref{P_8615_WeakToGen}\ref{P_8615_WeakToGen_ToGen}.

The proof of~\ref{P_8615_WeakZToGenZ_ToWk}
is the same as the proof of
Proposition \ref{P_8615_WeakToGen}\ref{P_8615_WeakToGen_ToWk}.
\end{proof}

\begin{proposition}\label{outer}
Let $A$ be a nonzero simple C*-algebra
and let $\alpha \in \mathrm{Aut} (A)$
have the weak tracial Rokhlin property.
Then $\alpha^{n}$ is outer
for every $n \in \mathbb{Z} \setminus \{ 0 \}$.
\end{proposition}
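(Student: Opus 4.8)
The plan is to argue by contradiction, so suppose $\alpha^{n}$ is inner for some $n \neq 0$. Since $\alpha^{-n} = (\alpha^{n})^{-1}$ is inner exactly when $\alpha^{n}$ is, I may assume $n \geq 1$, and I write $\alpha^{n} = \mathrm{Ad}(u)$ for a unitary $u \in M(A)$. The guiding idea is that the Rokhlin tower of height $n+1$ supplied by Definition~\ref{defwtrpaut} forces the bottom element $e_{0}$ and the top element $e_{n}$ to be orthogonal, while iterating the shift condition \eqref{defwtrpaut_it1} $n$ times gives $\alpha^{n}(e_{0}) = u e_{0} u^{*} \approx e_{n}$. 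If in addition the tower approximately commutes with $u$, then $u e_{0} u^{*} \approx e_{0}$, and these two facts are incompatible with $e_{0} \perp e_{n}$ once we know $e_{0}$ is ``large''.

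To make this precise I would fix a positive contraction $x \in A$ with $\|x\| = 1$ and, given a small $\varepsilon > 0$, apply Definition~\ref{defwtrpaut} with tower parameter $n$, with $y = x$, and with a finite set $F$ chosen as below, obtaining orthogonal positive contractions $e_{0}, e_{1}, \ldots, e_{n}$ and $e = \sum_{j=0}^{n} e_{j}$. The largeness condition \eqref{defwtrpaut_it4}, $\|e x e\| > 1 - \varepsilon$, together with orthogonality of the $e_{j}$ (which gives $\|e\| = \max_{j}\|e_{j}\|$) produces some $j_{0}$ with $\|e_{j_{0}}\| > \sqrt{1-\varepsilon}$; feeding this through \eqref{defwtrpaut_it1} in both directions shows $\|e_{j}\| > \sqrt{1-\varepsilon} - n\varepsilon$ for every $j$, in particular for $j = 0$ and $j = n$. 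Iterating \eqref{defwtrpaut_it1} and using that $\alpha$ is isometric also gives $\|u e_{0} u^{*} - e_{n}\| < n \varepsilon$. Since $e_{0} e_{n} = 0$ with $e_{0}, e_{n} \geq 0$ we have $\|e_{0} - e_{n}\| = \max(\|e_{0}\|, \|e_{n}\|)$, which is close to $1$; so if I can also arrange $\|u e_{0} u^{*} - e_{0}\|$ to be small, then $\|e_{0} - e_{n}\|$ is small, which is the desired contradiction.

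When $A$ is unital the remaining point is immediate: $u \in A$, so I would put $u \in F$ and invoke \eqref{defwtrpaut_it2}, giving $\|e_{0} u - u e_{0}\| \leq \varepsilon$ and hence $u e_{0} u^{*} \approx e_{0}$. The main obstacle is the nonunital case, where $u$ lies only in $M(A)$ while Definition~\ref{defwtrpaut}\eqref{defwtrpaut_it2} controls commutators of the $e_{j}$ only against elements of $A$. I would handle this by choosing an approximate identity $(h_{\lambda})$ for $A$ that is quasicentral relative to $u$ (so that $\|[h_{\lambda}, u]\| \to 0$), fixing $\lambda$ with $\|[h_{\lambda}, u]\|$ small, putting both $h_{\lambda}$ and $h_{\lambda} u \in A$ into $F$, and choosing $x$ inside $\overline{h_{\lambda} A h_{\lambda}}$ so that the largeness condition \eqref{defwtrpaut_it4} ties the support of the tower to the region where $h_{\lambda}$ is close to a unit. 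The technically delicate point—and where I expect the real work to be—is the bookkeeping of error terms needed to upgrade the commutation of $e_{0}$ with $h_{\lambda}$ and with $h_{\lambda} u$ into the genuine estimate $u e_{0} u^{*} \approx e_{0}$; that is, localizing the multiplier $u$ to the part of $A$ seen by the Rokhlin tower.
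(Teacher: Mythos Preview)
Your overall strategy—exploit orthogonality of $e_0$ and $e_n$ together with $\alpha^n(e_0)\approx e_n$ and $\alpha^n=\mathrm{Ad}(u)$ to reach a contradiction—is exactly the paper's, and your unital sketch is fine.

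In the nonunital case, however, there is a real gap, and it is precisely the step you flag as ``the real work.'' Knowing that $e_0$ approximately commutes with $h_\lambda$ and with $h_\lambda u$ only yields $h_\lambda[e_0,u]\approx 0$ and $[e_0,u]h_\lambda\approx 0$; to conclude $ue_0u^*\approx e_0$ you would additionally need $h_\lambda e_0\approx e_0$, i.e., that $e_0$ is supported where $h_\lambda$ is close to a unit. Nothing in Definition~\ref{defwtrpaut} forces this: the condition $\|exe\|>1-\varepsilon$ with $x\in\overline{h_\lambda A h_\lambda}$ localizes only the product $exe$, not the individual tower elements, and the remaining conditions put no constraint on where $e_0$ lives.

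The paper resolves this not by localizing $e_0$ but by never asking for $ue_0u^*\approx e_0$ at all. It takes a single norm-one positive $b_0\in A$ to play simultaneously the role of your $x$ and of your approximate unit, and puts the elements $u\,\alpha^{-k}(b_0^{1/2})\in A$ (for $k=0,\ldots,n$) into~$F$. After finding $l$ with $\|f_l b_0 f_l\|$ close to~$1$ and setting $b=\alpha^{-l}(b_0)$, it compares the \emph{sandwiched} elements $c=f_0^{1/2}\alpha^n(b)f_0^{1/2}$ and $d=f_n^{1/2}\alpha^n(b)f_n^{1/2}$. These are orthogonal positive elements with $\|c\|$ close to~$1$, and the key chain
\[
d\approx \alpha^n(f_0 b)=uf_0bu^*\approx ub^{1/2}f_0b^{1/2}u^*\approx f_0\,ub^{1/2}\cdot b^{1/2}u^*=f_0\alpha^n(b)\approx c
\]
uses only commutation of $f_0$ with elements of $A$ (namely $u\,b^{1/2}=u\,\alpha^{-l}(b_0^{1/2})\in F$ and translates of $b_0$), never with $u$ itself. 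Sandwiching by $b$ is what makes the multiplier disappear; this is the idea your proposal is missing.
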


\begin{proof}
It is enough to prove the case $n > 0$.
So assume $n \in \mathbb{Z}$ and $n > 0$.
Let $u \in M (A)$ be unitary, and assume that
$\af^n = {\mathrm{Ad}} (u)$;
we derive a contradiction.

Define
\[
\dt = \frac{1}{n^2 + 3 n + 9}.
\]
Use \cite[Lemma~2.5]{AP16} to choose
$\delta_0 > 0$ such that whenever $D$ is a C*-algebra
and $h, k \in D$ satisfy $0 \leq h, k \leq 1$
and $\| [h, k]\| < \delta_0$, then $\| [h^{1/2}, k] \| < \delta$.
We also require $\delta_0 \leq \delta$.
Choose $b_0 \in A_{+}$ such that $\| b_0 \| = 1$.
Set
\[
F = \bigl\{ \af^{k} (b_0)
       \colon k = - n, \, - n + 1, \, \ldots, \, n \bigr\}
     \cup \bigl\{ u \af^{- k} \big( b_0^{1/2} \big)
       \colon k = 0, 1, \ldots, n \bigr\}.
\]

Apply Definition~\ref{defwtrpaut} with this choice of~$F$,
with $\delta_0$ in place of $\varepsilon$,
with $x = b_0$, and with $y = 0$.
We get orthogonal positive contractions
$f_{0}, f_{1}, \ldots, f_{n} \in A$ such that,
with $f = \sum_{j = 1}^{n} f_{j}$, the following hold:
\begin{enumerate}
\item\label{WTRP-O-F-1a}
$\big\| f_{j} \af^{k} (b_0) - \af^{k} (b_0) f_{j} \big\| < \delta_0$
for all $j \in \{0, 1, \ldots, n \}$
and all $k \in \{ - n, \, - n + 1, \, \ldots, \, n \}$.
\item\label{WTRP-O-F-1b}
$\big\| f_{j} u \af^{- k} \big( b_0^{1/2} \big)
        - u \af^{- k} \big( b_0^{1/2} \big) f_{j} \big\|
   < \delta_0$
for all $j, k \in \{0, 1, \ldots, n \}$.
\item\label{WTRP-O-F-2}
$\|\alpha (f_{j}) - f_{j + 1} \| < \delta_0$
for all $j \in \{0, 1, \ldots, n - 1 \}$.
\item\label{WTRP-O-F-3}
$\| f b_0 f \| > 1 - \delta_0$.
\setcounter{TmpEnumi}{\value{enumi}}
\end{enumerate}
It follows that
(using the choice of $\dt_0$
for (\ref{WTRP-O-F-C5})
and~(\ref{WTRP-O-F-C_New03})):
\begin{enumerate}
\setcounter{enumi}{\value{TmpEnumi}}
\item\label{WTRP-O-F-C4}
Whenever $j, k \in \{0, 1, \ldots, n \}$
then $\| \af^{j - k} (f_k) - f_j \| < n \dt_0$.
\item\label{WTRP-O-F-C5}
$\big\| f_{j}^{1/2} \af^{k} (b_0)
     - \af^{k} (b_0) f_{j}^{1/2} \big\| < \delta$
for all $j \in \{0, 1, \ldots, n \}$
and all $k \in \{ - n, \, - n + 1, \, \ldots, \, n \}$.
\item\label{WTRP-O-F-C_New03}
$\big\| f_{j} \af^{k} (b_0)^{1/2}
     - \af^{k} (b_0)^{1/2} f_{j} \big\| < \delta$
for all $j \in \{0, 1, \ldots, n \}$
and all $k \in \{ - n, \, - n + 1, \, \ldots, \, n \}$.
\end{enumerate}

We claim that there exists $l \in \{0, 1, \ldots, n \}$
such that
\[
\| f_l b_0 f_l \| > 1 - (n^2 + n + 1) \dt.
\]
To prove the claim,
first use \eqref{WTRP-O-F-1a}
and orthogonality of $f_{0}, f_{1}, \ldots, f_{n}$
to see that if $j, k \in \{0, 1, \ldots, n \}$ with $j \neq k$,
then $\| f_j b_0 f_k \| < \dt_0 \leq \dt$,
and
\[
\Bigg\| \sum_{j = 0}^{n} f_j b_0 f_j \Bigg\|
 = \max_{0 \leq j \leq n} \| f_j b_0 f_j \|.
\]
Using these two facts at the third step,
we get
\[
1 - \dt
 < \| f b_0 f \|
 \leq \Bigg\| \sum_{j = 0}^{n} f_j b_0 f_j \Bigg\|
      + \sum_{j \neq k} \| f_j b_0 f_k \|
 < \max_{0 \leq j \leq n} \| f_j b_0 f_j \| + n (n + 1) \dt.
\]
The claim follows.

Now let $l$ be as in the claim,
and define $b = \af^{- l} (b_0)$.
Then, using the claim and (\ref{WTRP-O-F-C4}) at the third step,
\begin{align}\label{Eq_8613_f0b}
\| f_0 b \|
& = \| \af^l (f_0) b_0 \|
  \geq \| f_l b_0 \| - \| \af^l (f_0) - f_l \|\cdot \| b_0 \|
\\
& > \bigl[ 1 - (n^2 + n + 1) \dt) \bigr] - n \dt
  = 1 - (n^2 + 2 n + 1) \dt.
\notag
\end{align}
%
Define $c = f_0^{1/2} \af^n (b) f_0^{1/2}$
and $d = f_n^{1/2} \af^n (b) f_n^{1/2}$.
Using (\ref{WTRP-O-F-C5}) at the first and last steps,
using (\ref{WTRP-O-F-C4}) at the second step,
using (\ref{WTRP-O-F-C_New03}) at the fourth step,
and using (\ref{WTRP-O-F-1b}) at the fifth step,
we get
\begin{align}\label{Eq_8613_MainApp}
d
& \approx_{\dt} f_n \af^n (b)
  \approx_{n \dt} \af^n (f_0 b)
  = u f_0 b u^*
\\
& \approx_{\dt} u b^{1/2} f_0 b^{1/2} u^*
  \approx_{\dt_0} f_0 u b^{1/2} b^{1/2} u^*
  = f_0 \af^n (b)
  \approx_{\dt} c.
\notag
\end{align}
Using~(\ref{Eq_8613_f0b})
and the appearance $u f_0 b u^*$
in the middle of~(\ref{Eq_8613_MainApp}),
we get
\[
\| c \|
   > 1 - (n^2 + 2 n + 1) \dt - (\dt_0 + 2 \dt)
   \geq 1 - (n^2 + 2 n + 4) \dt.
\]
Also from~(\ref{Eq_8613_MainApp}),
we get
\[
\| c - d \|
   < (n + 3) \dt + \dt_0
   \leq (n + 4) \dt.
\]
Therefore,
at the second step using the fact that
$c$ and $d$ are orthogonal positive elements,
\[
(n + 4) \dt
 > \| c - d \|
 = \max (\| c \|, \, \| d \|)
 > 1 - (n^2 + 2 n + 4) \dt,
\]
which implies
$(n^2 + 3 n + 8) \dt > 1$.
This inequality contradicts the choice of~$\dt$.
\end{proof}

\begin{corollary}\label{C_8613_CrPrdSimple}
Let $A$ be a simple C*-algebra
and let $\alpha \in \mathrm{Aut} (A)$
have the weak tracial Rokhlin property.
Then $C^{*} (\mathbb{Z}, A, \alpha)$ is simple.
\end{corollary}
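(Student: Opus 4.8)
The plan is to reduce simplicity of the crossed product to pointwise outerness of the $\mathbb{Z}$-action, exactly as is done for finite groups at the start of the proof of Theorem~\ref{thm_fg}. The action of $\mathbb{Z}$ on $A$ determined by $\alpha$ sends the integer $n$ to $\alpha^{n}$, so the assertion that this action is pointwise outer is precisely the assertion that $\alpha^{n}$ is outer for every $n \in \mathbb{Z} \setminus \{ 0 \}$.

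First I would invoke Proposition~\ref{outer}, which supplies exactly this: under the weak tracial Rokhlin property, $\alpha^{n}$ is outer for every nonzero~$n$. Hence the generating automorphism yields a pointwise outer action of the discrete group $\mathbb{Z}$ on the simple C*-algebra~$A$.

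Next I would apply Kishimoto's theorem on reduced crossed products by pointwise outer actions \cite[Theorem 3.1]{Ki81}, which gives that the reduced crossed product $C^{*}_{\mathrm{r}} (\mathbb{Z}, A, \alpha)$ is simple. Since $\mathbb{Z}$ is amenable, the reduced and full crossed products coincide, so $C^{*} (\mathbb{Z}, A, \alpha)$ is simple.

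The only point requiring care is matching the notion of outerness in Proposition~\ref{outer} with the hypothesis of \cite[Theorem 3.1]{Ki81}. Proposition~\ref{outer} rules out $\alpha^{n} = \mathrm{Ad} (u)$ for a unitary~$u$ in the multiplier algebra $M (A)$; for a simple C*-algebra this is equivalent to proper outerness, which is exactly what Kishimoto's theorem requires. Since this is the same mechanism already used for the finite group case, no genuine obstacle arises, and the argument is essentially immediate once Proposition~\ref{outer} is in hand.
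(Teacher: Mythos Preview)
Your proposal is correct and follows exactly the paper's own argument: the paper's proof is the single sentence ``Combine Proposition~\ref{outer} and \cite[Theorem~3.1]{Ki81},'' and you have simply spelled out the details (pointwise outerness from Proposition~\ref{outer}, then Kishimoto's theorem, with the amenability of~$\mathbb{Z}$ identifying the full and reduced crossed products).
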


\begin{proof}
Combine Proposition~\ref{outer}
and \cite[Theorem~3.1]{Ki81}.
\end{proof}

The condition in the following definition substitutes for
the hypothesis in \cite[Lemma 6.6 and Theorem 6.7]{HO13}
that some power of the automorphism acts trivially
on the tracial state space.
The difference from Definition~\ref{defwtrpaut}
is the addition of the last condition.

\begin{definition}\label{defwtrpaut_cont}
Let $A$ be a simple C*-algebra and let $\alpha \in \mathrm{Aut} (A)$.
We say that $\alpha$ has
the \emph{controlled weak tracial Rokhlin property}
if for every $z \in A_{+} \setminus \{ 0 \}$
there exists $n_{0} \in \mathbb{N}$ such that
for every $n \in \mathbb{N}$ with $n \geq n_{0}$,
every finite set $F \subseteq A$, every $\varepsilon > 0$,
and every $x, y  \in A_{+}$ with $\| x \| = 1$,
there exist orthogonal positive contractions
$e_{0}, e_{1}, \ldots, e_{n}$ in $A$ such that,
with $e = \sum_{j = 0}^{n} e_{j}$, the following hold:
\begin{enumerate}
\item\label{defwtrpautc_it1}
$\| \alpha (e_{j}) - e_{j + 1} \| < \varepsilon$
for $j = 0, 1, \ldots, n - 1$.
\item\label{defwtrpautc_it2}
$\| [e_{j}, b] \| < \varepsilon$
for $j = 0, 1, \ldots, n$ and all $b \in F$.
\item\label{defwtrpautc_it3}
$(y^{2} - y e y - \varepsilon)_{+} \precsim x$.
\item\label{defwtrpautc_it4}
$\|e x e\| > 1 - \varepsilon$.
\item\label{defwtrpautc_it5}
$e_{j} \precsim z$ for $j = 0, 1, \ldots, n$.
\end{enumerate}
\end{definition}

We do not know any example
of an automorphism of a simple C*-algebra which has
the weak tracial Rokhlin property
but not the controlled weak tracial Rokhlin property,
although we suspect such examples exist.
It seems less likely
that such examples can exist
on a simple tracially $\mathcal{Z}$-absorbing C*-algebra.

Note that if $\alpha \in \mathrm{Aut} (A)$ has 
the controlled weak tracial Rokhlin property then it
has the  weak tracial Rokhlin property, since by
taking a fixed $z_0\in A_+\setminus \{0\}$,
Definition~\ref{defwtrpaut_cont} implies that there
is $n_0\in\mathbb{N}$ such that Definition~\ref{defwtrpaut}
holds for any $n\geq n_0$. Now if we are given 
$F,\varepsilon,n,x,y$ as in Definition~\ref{defwtrpaut},
then we can apply Definition~\ref{defwtrpaut} 
 with $m=(n+1)n_0 -1$ in place
of $n$ and with $\varepsilon/n_0$ in place of $\varepsilon$
to get $e_0,\ldots,e_m$. Set $f_j=\sum_{k=0}^{n_0 -1} e_{j+k(n+1)}$,
for $0\leq j\leq n$. Then $f_j$'s satisfy Definition~\ref{defwtrpaut}.

\begin{remark}\label{rmk_cwtrp}
Definition~\ref{defwtrpaut_cont}
is equivalent to Definition~\ref{defwtrpaut} if $A$ is purely infinite,
since in this case, $a\precsim z$ for all $a,z\in A_+\setminus \{0\}$.
\end{remark}

\begin{proposition}\label{P_8616_GenToWeakControl}
Let $A$ be a  simple unital exact
tracially $\mathcal{Z}$-absorbing C*-algebra
and let $\alpha \in \mathrm{Aut} (A)$
have the generalized tracial Rokhlin property
of \cite[Definition 6.1]{HO13}.
Suppose that there is $m > 0$ such that $\af^m$
acts trivially on the tracial state space $T (A)$.
Then $\alpha$ has the controlled weak tracial Rokhlin property.
\end{proposition}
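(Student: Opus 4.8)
The plan is to manufacture, for a given $z \in A_+ \setminus \{0\}$, Rokhlin towers meeting Definition~\ref{defwtrpaut_cont}, observing first that only condition~\eqref{defwtrpautc_it5} is genuinely new: the generalized tracial Rokhlin property already yields the weak tracial Rokhlin property (Proposition~\ref{P_8615_WeakZToGenZ}\ref{P_8615_WeakZToGenZ_ToGen}), so conditions \eqref{defwtrpautc_it1}--\eqref{defwtrpautc_it4} can be secured exactly as in the proof of Proposition~\ref{P_8615_WeakToGen} (the defect $1-e$ is pushed below an element of $\overline{xAx}$ chosen via \cite[Lemma~2.9]{Ph14}, which forces both $\|exe\| > 1-\varepsilon$ and $(y^2-yey-\varepsilon)_+ \precsim x$). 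I would dispose of the trivial case at once: if $T(A) = \emptyset$, then $A$ is purely infinite, $a \precsim z$ for every nonzero $a$, and Remark~\ref{rmk_cwtrp} identifies the controlled property with the weak one. So I assume $T(A) \neq \emptyset$. Since $A$ is tracially $\mathcal{Z}$-absorbing it has strict comparison (\cite[Theorem~3.3]{HO13}, cf.~\cite[Theorem~6.4]{AGJP21}), and because $A$ is exact every $2$-quasitrace is a trace, so Cuntz subequivalence into $z$ may be tested against the dimension functions $d_\tau$, $\tau \in T(A)$.

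Next I fix the uniform $n_0$. As $A$ is simple and unital and $z \neq 0$, the function $\tau \mapsto d_\tau(z)$ is lower semicontinuous and strictly positive on the compact simplex $T(A)$, hence $\gamma := \inf_{\tau} d_\tau(z) > 0$. I choose $n_0$ so that $\lfloor (n_0+1)/m \rfloor > 1/\gamma$, with a little room to absorb error terms. Given $n \geq n_0$ and data $F$, $\varepsilon$, $x$, $y$, I apply the generalized tracial Rokhlin property with tower length $n+1$, with an auxiliary tolerance $\varepsilon'$ much smaller than both $\varepsilon$ and $1/n$, with $F$ enlarged to include the finitely many elements $\alpha^{-j}(b)$ used below, and with the defect element chosen as above, obtaining orthogonal positive contractions $e_0, \ldots, e_n$ satisfying \eqref{defwtrpautc_it1}--\eqref{defwtrpautc_it4}.

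The crux is condition~\eqref{defwtrpautc_it5}: I must show $d_\tau(e_j) < \gamma$ for all $\tau$ and $j$, whence $e_j \precsim z$ by strict comparison. Two facts drive this. First, orthogonality gives $\sum_{j=0}^{n} d_\tau(e_j) = d_\tau\bigl(\sum_j e_j\bigr) \leq d_\tau(1) = 1$. Second---and here the hypothesis that $\alpha^m$ fixes $T(A)$ pointwise is essential---the approximate covariance $\|\alpha(e_j) - e_{j+1}\| < \varepsilon'$ iterates to $\|\alpha^m(e_j) - e_{j+m}\| < m\varepsilon'$, and combined with $\tau \circ \alpha^m = \tau$ (so $d_\tau \circ \alpha^m = d_\tau$) it pins the dimensions $d_\tau(e_j)$ together along each residue class $j \equiv j' \pmod m$: Rørdam's lemma converts the norm estimate into $(e_{j+m}-m\varepsilon')_+ \precsim \alpha^m(e_j)$ and $\alpha^m((e_j - m\varepsilon')_+) \precsim e_{j+m}$, which on applying $d_\tau$ bound the two values against each other. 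In the exact-covariance model the dimensions are literally constant on each of the $m$ residue classes, so the class carrying the maximal value contributes at least $\lfloor (n+1)/m \rfloor$ equal terms to a sum bounded by $1$, forcing $d_\tau(e_j) \leq m/(n+1) < \gamma$ for every $j$.

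The main obstacle is upgrading this exact computation to the approximate setting while controlling the full dimension function $d_\tau(e_j)$ rather than merely the norm-continuous (hence trivially periodic) trace value $\tau(e_j)$: the discrepancy $d_\tau(e_j) - d_\tau((e_j-\delta)_+)$ is not uniformly small, so one cannot naively chain the cut-down comparisons across the whole tower. I would overcome this by taking $\varepsilon'$ small relative to $n$ so that the per-period error $m\varepsilon'$ is negligible, and by chaining the Rørdam comparisons only over single periods of length $m$ (independent of $n$), using $\alpha^m$-invariance to prevent accumulation over the full tower. Should the direct estimate prove too delicate, an alternative is to refine the tower using a central c.p.c.\ order zero map $\varphi \colon M_k \to A$ provided by tracial $\mathcal{Z}$-absorption and to replace $e_j$ by $f_j^{1/2}\,\alpha^j(\varphi(e_{1,1}))\,f_j^{1/2}$: the matrix-unit images of an order zero map have equal dimensions summing to at most $1$, hence $d_\tau(e_j) \leq 1/k < \gamma$ for $k$ large, while covariance, orthogonality, and commutation are preserved by construction (condition~\eqref{defwtrpautc_it4} then being re-established by aligning $\varphi$ with the top part of $x$). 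Either route delivers condition~\eqref{defwtrpautc_it5} and completes the verification of Definition~\ref{defwtrpaut_cont}.
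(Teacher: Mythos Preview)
Your overall architecture matches the paper's: dispose of the traceless case via pure infiniteness and Remark~\ref{rmk_cwtrp}; for $T(A)\neq\emptyset$ set $\gamma=\inf_{\tau\in T(A)} d_\tau(z)>0$; produce Rokhlin towers from the generalized tracial Rokhlin property; use strict comparison (\cite[Theorem~3.3]{HO13}) together with Haagerup's theorem on quasitraces to reduce condition~\eqref{defwtrpautc_it5} to the estimate $d_\tau(e_j)<\gamma$; and recover conditions \eqref{defwtrpautc_it3}--\eqref{defwtrpautc_it4} exactly as in the proof of Proposition~\ref{P_8615_WeakToGen}\ref{P_8615_WeakToGen_ToWk} (finiteness of $A$ being automatic once $T(A)\neq\emptyset$). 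The paper, however, does not re-derive the dimension estimate $d_\tau(e_j)<\gamma$: it simply invokes \cite[Lemma~6.5]{HO13}, which already packages the argument that under the hypothesis $\tau\circ\alpha^m=\tau$ one can arrange the tower so that each $d_\tau(e_j)$ is below any prescribed $\rho>0$.

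Where your sketch is incomplete is the claim that ``chaining the R{\o}rdam comparisons only over single periods of length~$m$'' prevents accumulation. From $\|e_{j+m}-\alpha^m(e_j)\|<m\varepsilon'$ and $d_\tau\circ\alpha^m=d_\tau$ you indeed get $d_\tau\bigl((e_j-m\varepsilon')_+\bigr)\leq d_\tau(e_{j+m})$, but this controls only the cut-down, never $d_\tau(e_j)$ itself, and the one-period inequalities do not compose: to relate $e_j$ to $e_{j+km}$ via $d_\tau$ you still need $\|\alpha^{km}(e_j)-e_{j+km}\|<km\varepsilon'$, so the cut-down is of order $n\varepsilon'$, not $m\varepsilon'$. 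The $\alpha^m$-invariance of $d_\tau$ lets you replace $d_\tau(\alpha^{km}(e_j))$ by $d_\tau(e_j)$ \emph{after} the comparison, but it does not let you iterate the comparison without accumulating cuts. A correct direct argument first replaces each $e_j$ by $(e_j-\delta)_+$ for a fixed $\delta$ with $n\varepsilon'\ll\delta$ and $(n+1)\delta\ll\varepsilon$, then sums $d_\tau\bigl((e_j-\delta)_+\bigr)\leq d_\tau(e_{j+km})$ over the residue class to force $d_\tau\bigl((e_j-\delta)_+\bigr)<\gamma$; the cut-down elements still satisfy \eqref{defwtrpautc_it1}--\eqref{defwtrpautc_it4}. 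Your second workaround via an auxiliary order-zero map is closer in spirit to the machinery in \cite{HO13}, but as you present it the recovery of condition~\eqref{defwtrpautc_it4} after refinement is asserted rather than argued. In short, the missing step in your proposal is exactly what the paper outsources to \cite[Lemma~6.5]{HO13}.
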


\begin{proof}
If $A$ is traceless then by \cite[Corllary~5.1]{Ro04}, $A$ is purely infinite.
Then by Remark~\ref{rmk_cwtrp} we are done. In the
sequel, assume that $T(A)\neq \emptyset$.

We first prove the version without
the condition
$\| e x e \| > 1 - \ep$;
see (\ref{Cond_8703_Norm1}),
(\ref{Cond_8703_Trans}),
(\ref{Cond_8703_Comm}),
(\ref{Cond_8703_Small}),
and (\ref{Cond_8817_6}) below.

Let $z \in A_{+} \setminus \{ 0 \}$.
Set $\rho = \inf_{\ta \in T (A)} d_{\ta} (z)$.
Choose some $r \in \big( {\overline{z A z}} \big)_{+}$
such that $\| r \| = 1$.
Then $\rh \geq \inf_{\ta \in T (A)} \ta (r) > 0$.

By \cite[Lemma~2.4]{Ph14},
for every $k \in \N$
there is $b \in A_{+} \setminus \{ 0 \}$
such that $k \langle b \rangle \leq \langle 1 \rangle$
in $W (A)$,
and by \cite[Lemma~2.4]{Ph14}
for every $x \in A_{+} \setminus \{ 0 \}$
there is $c \in A_{+} \setminus \{ 0 \}$
such that $c \precsim x$ and $c \precsim z$.
Using these two facts and \cite[Lemma~6.5]{HO13},
we see that there is $n_0 \in \N$ such that
for every $n \in \mathbb{N}$ with $n \geq n_0$,
every finite set $F \subseteq A$, every $\varepsilon > 0$,
and every $x \in A_{+} \setminus \{ 0 \}$,
there exist orthogonal positive contractions
$e_{0}, e_{1}, \ldots, e_{n}$ in $A$ such that,
with $e = \sum_{j = 0}^{n} e_{j}$, the following hold:
\begin{enumerate}
\item\label{Cond_8703_Norm1}
$\| e_j \| = 1$ for $j = 0, 1, \ldots, n$.
\item\label{Cond_8703_Trans}
$\| \alpha (e_{j}) - e_{j + 1} \| < \varepsilon$
for $j = 0, 1, \ldots, n - 1$.
\item\label{Cond_8703_Comm}
$\| [e_{j}, b] \| \leq \varepsilon$
for $j = 0, 1, \ldots, n$ and all $b \in F$.
\item\label{Cond_8703_Small}
$1 - \sum_{j = 0}^n e_j \precsim x$.
\item\label{Cond_8703_Controlled}
$d_{\ta} (e_j) < \rho$
for $j = 0, 1, \ldots, n$.
\setcounter{TmpEnumi}{\value{enumi}}
\end{enumerate}
Since by \cite[Theorem~3.3]{HO13} $A$ has strict comparison
and, by \cite[Theorem 5.11]{Haa}, all quasitraces on~$A$ are traces, so
the last condition implies that:
\begin{enumerate}
\setcounter{enumi}{\value{TmpEnumi}}
\item\label{Cond_8817_6}
$e_j \precsim z$
for $j = 0, 1, \ldots, n$.
\end{enumerate}

The rest of the proof
is now the same as the proof of
Proposition~\ref{P_8615_WeakToGen}\ref{P_8615_WeakToGen_ToWk}.
\end{proof}

Combing Proposition~\ref{P_8615_WeakToGen}\ref{P_8615_WeakToGen_ToGen},
 the previous propostion, and \cite[Remark~6.8]{HO13}, we obtain 
 the following:

\begin{corollary}\label{cor_cwtrp}
Let $A$ be a  simple unital exact
tracially $\mathcal{Z}$-absorbing C*-algebra
such that $T(A)$ has finitely many extremal traces.
If $\alpha \in \mathrm{Aut} (A)$
has the weak tracial Rokhlin property,
then it has the controlled weak tracial Rokhlin property.
\end{corollary}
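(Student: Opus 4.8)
The plan is to chain together the two immediately preceding propositions, using the finiteness of the extremal trace space to supply the one hypothesis of Proposition~\ref{P_8616_GenToWeakControl} that is not directly assumed. First I would invoke Proposition~\ref{P_8615_WeakZToGenZ}\ref{P_8615_WeakZToGenZ_ToGen}: since $A$ is simple and unital and $\alpha$ has the weak tracial Rokhlin property, $\alpha$ has the generalized tracial Rokhlin property of \cite[Definition~6.1]{HO13}. This step is immediate and requires no further work.

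The substantive point is to produce an integer $m > 0$ with $\alpha^m$ acting trivially on $T(A)$, so that Proposition~\ref{P_8616_GenToWeakControl} becomes applicable. Here I would use the standard fact (this is the content of \cite[Remark~6.8]{HO13}) that $\alpha$ induces an affine homeomorphism $\alpha^{*}$ of the Choquet simplex $T(A)$, and that any such homeomorphism carries extreme points to extreme points. Consequently $\alpha^{*}$ restricts to a permutation of the \emph{finite} set of extremal traces; taking $m$ to be the order of this permutation, we get that $(\alpha^{*})^{m} = (\alpha^{m})^{*}$ fixes every extremal trace. Since $T(A)$ is the closed convex hull of its extreme points and $(\alpha^{m})^{*}$ is continuous and affine, it follows that $(\alpha^{m})^{*}$ is the identity on all of $T(A)$; that is, $\alpha^{m}$ acts trivially on $T(A)$.

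Finally I would apply Proposition~\ref{P_8616_GenToWeakControl}: the algebra $A$ is simple, unital, exact, and tracially $\mathcal{Z}$-absorbing; $\alpha$ has the generalized tracial Rokhlin property of \cite[Definition~6.1]{HO13} by the first step; and $\alpha^{m}$ acts trivially on $T(A)$ by the second step. Hence $\alpha$ has the controlled weak tracial Rokhlin property, which is the desired conclusion.

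I do not expect a genuine obstacle here, since the argument is essentially a composition of citations. The only point needing a little care is the middle step, and even that is routine once one observes that an affine homeomorphism of a simplex permutes the extreme points, and that a continuous affine self-map fixing every extreme point must be the identity. I would also note in passing that the case $T(A) = \emptyset$ is handled separately inside Proposition~\ref{P_8616_GenToWeakControl} (via pure infiniteness and Remark~\ref{rmk_cwtrp}), so the reduction to the nonempty trace-space case is already accounted for by the propositions being invoked.
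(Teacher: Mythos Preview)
Your proposal is correct and follows exactly the paper's approach: combine Proposition~\ref{P_8615_WeakZToGenZ}\ref{P_8615_WeakZToGenZ_ToGen}, \cite[Remark~6.8]{HO13} (which supplies the $m$ such that $\alpha^m$ acts trivially on $T(A)$ from the finiteness of the extremal boundary), and Proposition~\ref{P_8616_GenToWeakControl}. You have simply unpacked the one-line proof the paper gives just before the corollary.
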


\begin{lemma}\label{L_8612_AppInv}
Let $A$ be a stably finite C*-algebra,
let $G$ be a discrete group,
and let $\alpha \colon G \to {\mathrm{Aut}} (A)$
be an action of $G$ on~$A$.
Let $F \subseteq A$
and $S \subseteq G$ be finite.
Then for every $\varepsilon > 0$,
there is $e \in A$
such that $0 \leq e \leq 1$,
$\| e a - a \| < \varepsilon$
for all $a \in F$,
and $\| \alpha_g (e) - e \| < \varepsilon$
for all $g \in S$.
\end{lemma}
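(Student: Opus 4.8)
The plan is to produce the element $e$ by a soft convexity argument rather than by any averaging over the group, since $G$ need not be amenable and no F\o lner sets are available. First I would fix an approximate identity $(u_\lambda)_{\lambda \in \Lambda}$ for $A$ with $0 \le u_\lambda \le 1$, and observe that although $\|\alpha_g(u_\lambda) - u_\lambda\|$ need not tend to $0$, the net $\alpha_g(u_\lambda) - u_\lambda$ does converge to $0$ \emph{weakly} for each $g \in S$. Indeed, it is enough to test against a positive functional $\varphi$ on $A$ (every functional is a linear combination of these), and for such a $\varphi$ one has $\varphi(u_\lambda) \to \|\varphi\|$, while $\varphi(\alpha_g(u_\lambda)) = (\varphi \circ \alpha_g)(u_\lambda) \to \|\varphi \circ \alpha_g\| = \|\varphi\|$, because $\varphi \circ \alpha_g$ is again positive and of the same norm; subtracting gives the asserted weak convergence.

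Next I would package the invariance defects into a single bounded linear map. Set $E = \bigoplus_{g \in S} A$ and define $\Psi \colon A \to E$ by $\Psi(a) = (\alpha_g(a) - a)_{g \in S}$; the previous paragraph says $\Psi(u_\lambda) \to 0$ in the weak topology of $E$. Let
\[
C = \bigl\{ e \in A : 0 \le e \le 1 \text{ and } \|ea - a\| \le \tfrac{\varepsilon}{2} \text{ for all } a \in F \bigr\},
\]
which is convex and contains $u_\lambda$ for all sufficiently large $\lambda$, since $(u_\lambda)$ is an approximate identity. Consequently $0$ lies in the weak closure of $\Psi(C)$.

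The key step is then to upgrade weak closure to norm closure. Because $\Psi$ is linear and $C$ is convex, $\Psi(C)$ is a convex subset of $E$, so by Mazur's theorem (equivalently, Hahn--Banach separation applied to $\Psi(C)$) its weak closure coincides with its norm closure. Hence $0$ lies in the norm closure of $\Psi(C)$, and I may choose $e \in C$ with $\|\Psi(e)\| < \varepsilon$, that is, $\|\alpha_g(e) - e\| < \varepsilon$ for every $g \in S$. This $e$ satisfies $0 \le e \le 1$, and $\|ea - a\| \le \varepsilon/2 < \varepsilon$ for all $a \in F$ because $e \in C$, so it is exactly as required.

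The only delicate point is precisely this weak-to-norm passage: for a non-amenable $G$ there is no averaging that would yield an almost invariant element directly, and the weak convergence of $\Psi(u_\lambda)$ is genuinely \emph{not} norm convergence, so the convexity of $\Psi(C)$ together with Mazur's theorem is what makes the argument go through. I would also remark that stable finiteness of $A$ is not actually used here; the conclusion holds for an arbitrary action of an arbitrary discrete group on any C*-algebra, and the hypothesis is presumably retained only because that is the setting in which the lemma is later applied.
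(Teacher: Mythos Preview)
Your proof is correct and takes a genuinely different route from the paper's.

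The paper constructs $e$ explicitly as a Ces\`aro-type average $e = \frac{1}{n}\sum_{k=0}^{n-1} c_k$ of a carefully chosen ``tower'' of approximate identity elements $c_k$, where each $c_k$ lives in the hereditary subalgebra generated by $\{\alpha_g(a) : g \in S_k,\ a \in F\}$ for an increasing chain $S_0 \subseteq S_1 \subseteq \cdots \subseteq S_n$ of subsets of $G$, and each $c_k$ acts almost as a unit on the images under $S_1$ of all earlier $c_l$. The almost-invariance of $e$ is then extracted via a technical comparison lemma \cite[Lemma~4.3]{AP16} which, roughly, bounds $\|\alpha_g(e)-e\|$ by controlling a telescoping expansion of the differences $\alpha(f_j)-f_{j+1}$. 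This is constructive and hands-on, but requires several auxiliary estimates and the cited lemma from~\cite{AP16}.

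Your argument replaces all of that machinery by a single soft step: weak convergence $\alpha_g(u_\lambda)-u_\lambda \to 0$ for each $g$, combined with Mazur's theorem applied to the convex image $\Psi(C)$. This is shorter, more conceptual, and makes transparent why no amenability of $G$ is needed. The paper's approach, on the other hand, has the virtue of producing an explicit formula for $e$ in terms of finitely many elements, which could in principle give quantitative control not available from a pure existence argument. Your observation that stable finiteness of $A$ plays no role is also correct: neither proof uses it.
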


The group $G$ need not be amenable,
and there are no conditions at all on the action.
The methods are taken from the proof of \cite[Theorem 4.6]{AP16}.
The situation there,
however, is much more complicated,
and it seems simplest to give a full proof here,
relying on several lemmas from \cite{AP16}.

\begin{proof}[Proof of Lemma~\ref{L_8612_AppInv}]
Choose an integer $n\geq 2$ such that $n > \frac{4}{\varepsilon}$.
Define subsets of~$G$ by
\[
S_0 = \{ 1 \},
\qquad
S_1 = S \cup S^{-1} \cup \{ 1 \},
\qquad
S_2 = S_1 S_1,
\]
\[
S_3 = S_1 S_2,
\qquad
\ldots,
\qquad
S_n = S_1 S_{n - 1}.
\]

Use
\cite[Lemma 4.4]{AP16}
to choose
$\rh > 0$ such that
whenever $e, x \in A$ satisfy
\[
0 \leq e \leq 1,
\qquad
0 \leq x \leq 1,
\andeqn
\| e x - x \| < \rh,
\]
then $\big\| e^{1/2} x - x \big\| < \frac{\varepsilon}{4}$.

For $k = 0, 1, \ldots, n$,
let $D_k \subseteq A$ be the hereditary
subalgebra of~$A$ generated by all $\alpha_g (a)$
for $g \in S_k$ and $a \in F$.
Then $\alpha_g (D_k) \subseteq D_{k + 1}$
for $g \in S_1$ and $k = 0, 1, \ldots, n - 1$.
Choose $c_0 \in D_0$
such that $0 \leq c_0 \leq 1$ and
\begin{equation}\label{Eq_8613_AppId0}
\| c_0 a - a \| < \rh
\end{equation}
for $a \in F$.

By induction, choose
$c_k \in D_k$ for $k = 1, 2, \ldots, n$
such that $0 \leq c_k \leq 1$,
\begin{equation}\label{Eq_8613_AppIdk}
\| c_k \alpha_g (c_l) - \alpha_g (c_l) \| < \rh
\end{equation}
for
\[
k = 1, 2, \ldots, n,
\qquad
g \in S_1,
\andeqn
l = 0, 1, \ldots, k - 1,
\]
and
\begin{equation}\label{Eq_8613_AppIdEst}
\| c_k a - a \| < \varepsilon
\end{equation}
for $k = 0, 1, \ldots, n$
and $a \in F$.

Now define
$e = \frac{1}{n} \sum_{k = 0}^{n - 1} c_k$.
Clearly $0 \leq e \leq 1$.

For $a \in F$ we use~(\ref{Eq_8613_AppIdEst})
to get
\[
\| e a - a \|
 \leq \frac{1}{n} \sum_{k = 0}^{n - 1} \| c_k a - a \|
 < \left( \frac{1}{n} \right) n \varepsilon
 = \varepsilon.
\]

It remains to prove that $\| \alpha_g (e) - e \| < \varepsilon$
for all $g \in S$.
So let $g \in S$.
We will apply \cite[Lemma 4.3]{AP16}.
We define $c_{-1} = 0$,
and take the selfadjoint elements $a, b, d, r, x, y \in A$
to be
\[
a = \frac{1}{n} \sum_{k = 0}^{n - 2} c_k,
\qquad
b = \frac{1}{n} \sum_{k = 0}^{n} c_k,
\qquad
d = e,
\qquad
r = \alpha_g (e),
\]
\[
x = \frac{1}{n} \sum_{k = 0}^{n - 1}
          \alpha_g ( c_k^{1/2} )  c_{k - 1} \alpha_g ( c_k^{1/2} ),
\andeqn
y = \frac{1}{n} \sum_{k = 0}^{n - 1}
          c_{k + 1}^{1/2} \alpha_g ( c_k ) c_{k + 1}^{1/2}.
\]
To apply  \cite[Lemma 4.3]{AP16}, in place of 
 $\varepsilon$ we use $\frac{2}{n}$,
and in place of $\rh$ we use~$\frac{\varepsilon}{2}$.

We verify the hypotheses of \cite[Lemma 4.3]{AP16}.
That $a \leq d \leq b$ is clear.
The relations $x \leq r$ and $y \leq b$
follow from
\[
\alpha_g ( c_k^{1/2} ) c_{k - 1} \alpha_g ( c_k^{1/2} )
  \leq \alpha_g ( c_k )
\andeqn
c_{k + 1}^{1/2} \alpha_g ( c_k ) c_{k + 1}^{1/2}
 \leq c_{k + 1}
\]
for $k = 0, 1, \ldots, n - 1$.
We also have
\[
\| b - a \|
 = \left\| \frac{1}{n} \sum_{k = 0}^{n} c_k
            - \frac{1}{n} \sum_{k = 0}^{n - 2} c_k \right\|
 \leq \frac{1}{n} \big( \| c_{n - 1} \| + \| c_n \| \big)
 \leq \frac{2}{n}.
\]

It remain to prove that
$\| a - x \| < \frac{\varepsilon}{2}$
and
$\| r - y \| < \frac{\varepsilon}{2}$.

The choice of $\rh$
and the containment $S^{-1} \subseteq S_1$
imply that for $k = 0, 1, \ldots, n - 1$ we have
\[
\big\| c_{k + 1}^{1/2} \alpha_g ( c_k ) - \alpha_g ( c_k ) \big\|
 < \frac{\varepsilon}{4},
\qquad
\big\| \alpha_g ( c_k ) c_{k + 1}^{1/2} - \alpha_g ( c_k ) \big\|
 < \frac{\varepsilon}{4},
\]
\[
\big\| \alpha_g ( c_{k + 1}^{1/2} ) c_{k} - c_{k} \big\|
 < \frac{\varepsilon}{4},
\andeqn
\big\| c_{k} \alpha_g ( c_{k + 1}^{1/2} ) - c_{k} \big\|
 < \frac{\varepsilon}{4}.
\]
These relations imply that
\[
\| a - x \|
 < \left( \frac{1}{n} \right) (n - 1)
       \cdot 2 \left( \frac{\varepsilon}{4} \right)
 < \frac{\varepsilon}{2}
\quad {\mbox{and}} \quad
\| r - y \|
 < \left( \frac{1}{n} \right) n
       \cdot 2 \left( \frac{\varepsilon}{4} \right)
 = \frac{\varepsilon}{2}.
\]
The conclusion of \cite[Lemma 4.3]{AP16}
now tells us that
\[
\| \alpha_g (e) - e \|
 \leq \frac{2}{n} + \frac{\varepsilon}{2}
 < \frac{\varepsilon}{2} + \frac{\varepsilon}{2}
 = \varepsilon,
\]
as desired.
\end{proof}

The next lemma is the analog of
\cite[Lemma 6.6]{HO13}.
There are enough additional wrinkles
that we give the proof in full.

\begin{lemma}\label{lem_int}
Let $A$ be a simple tracially $\mathcal{Z}$-absorbing C*-algebra
and let $\alpha \in \mathrm{Aut} (A)$
have the controlled weak tracial Rokhlin property.
Then for every finite set $F \subseteq A$, every $\varepsilon > 0$,
every $a \in A_{+} \setminus \{ 0 \}$, every $n \in \mathbb{N}$,
and every positive contraction $x \in A$,
there exists a c.p.c.~order zero map $\psi \colon M_{n} \to A$
such that:
\begin{enumerate}[label=$\mathrm{(\arabic*)}$]
\item\label{lem7_9item1}
$\big( x^{2} - x \psi (1) x - \varepsilon \big)_{+} \precsim a$.
\item\label{lem7_9item2}
$\| [\psi (z), y] \| < \varepsilon$ for all $y \in F$
and all $z \in M_{n}$ with $\|z\| \leq 1$.
\item\label{lem7_9item3}
$\|\alpha (\psi (z)) - \psi (z) \| < \varepsilon$
for all $z \in M_{n}$ with $\|z\| \leq 1$.
\setcounter{TmpEnumi}{\value{enumi}}
\end{enumerate}
\end{lemma}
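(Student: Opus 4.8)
The plan is to adapt the proof of Lemma~\ref{lem_fg}, with the finite group $G$ replaced by a long Rokhlin tower $e_0, e_1, \ldots, e_N$ for the single automorphism $\af$ supplied by the controlled weak tracial Rokhlin property. Two features are new. First, the tower has a boundary, so the exact equivariance that produced invariance in the finite-group case is lost at the top level; I will defeat this by inserting a \emph{tapered} weight profile. Second, the number $N+1$ of levels is large rather than fixed, so the comparison estimates must be fed into $a$ with room to spare, which is exactly what the controlled estimate $e_j \precsim z$ is for.

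Concretely, after the usual reductions (\(F \subseteq A_+\), \(\| y \| \le 1\) for \(y \in F\), and a small \(\dt\) governing the order-zero correction of \cite[Proposition~2.5]{KW04} and the square-root commutator estimate of \cite[Lemma~2.5]{AP16}), I fix a ramp length \(L \gg 1/\ep\) and a trapezoidal profile \(c_0, c_1, \ldots, c_N \in [0,1]\) with \(c_0 = c_N = 0\), rising to \(1\) and back, so that \(\max_j |c_{j-1} - c_j| \le 1/L\). I then choose a tiny \(z \in A_+ \setminus \{0\}\), feed it to Definition~\ref{defwtrpaut_cont} to get \(n_0\), set \(N = \max(n_0, L)\), and only now pick the Rokhlin tolerance \(\et \ll 1/N\) (legitimate, since \(n_0\) depends on \(z\) alone). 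The controlled property yields orthogonal positive contractions \(e_0, \ldots, e_N\) with \(e = \sum_j e_j\), \(\|\af(e_j) - e_{j+1}\| < \et\), approximate commutation with \(F \cup \{x\}\), the comparison \((x^2 - x e x - \et)_+ \precsim b\), and \(e_j \precsim z\). Applying tracial \(\mathcal{Z}\)-absorption (Definition~\ref{deftza}) to a finite set \(E\) containing the \(\af^{\pm j}(y)\) and the \(\af^{\pm j}(e_i)^{1/2}\), with a freely chosen, arbitrarily small target \(a' \neq 0\), produces \(\ph \colon M_n \to A\); I set
\[
\widetilde\ph(\zt) = \sum_{j=0}^N c_j\, e_j^{1/2}\, \af^j(\ph(\zt))\, e_j^{1/2}
\]
and correct it to a genuine c.p.c.~order zero map \(\psi\) via \cite[Proposition~2.5]{KW04}.

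Conditions~\ref{lem7_9item2} and \ref{lem7_9item3} then follow much as in Lemma~\ref{lem_fg}. The commutator bound holds because each \(e_j\) commutes with \(y \in F\) and each \(\af^j(\ph(\zt))\) commutes with \(y\) (as \(\af^{-j}(y) \in E\)). For invariance, reindexing \(j \mapsto j+1\) and using \(\af(e_j) \approx e_{j+1}\) turns \(\af(\widetilde\ph(\zt))\) into the same sum with \(c_j\) shifted to \(c_{j-1}\); by orthogonality of the \(e_j\) the discrepancy is at most \(\max_j |c_{j-1} - c_j| \le 1/L\) together with the two boundary terms, which carry the coefficients \(c_0 = 0\) and \(c_N = 0\) and hence drop out. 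This is precisely where the tapering is used.

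The crux is Condition~\ref{lem7_9item1}, where, unlike Lemma~\ref{lem_fg} with its invariant \(x\), I cannot transport the tracial \(\mathcal{Z}\)-absorption estimate through \(\af^j\) for free. I would decompose \(x^2 - x\psi(1)x\) as the complement term \(x(1-e)x\), whose truncation is \(\precsim b\) by the tower comparison; the tapering defect supported on the \(O(L)\) end levels, hence \(\precsim \bigoplus_{\mathrm{end}} z\) by \(e_j \precsim z\); and the covering defect \(\sum_j c_j\, e_j^{1/2}x\,(1 - \af^j(\ph(1)))\,x\,e_j^{1/2}\). For the last, conjugating the \(j\)-th summand by \(\af^{-j}\) and using \(\af^{-j}(e_j) \approx e_0\) (valid to within \(N\et \ll 1\), the main reason for the smallness of \(\et\)) moves the demand into the bottom corner \(\overline{e_0 A e_0}\); arranging \(\ph(1) \approx 1\) there via the single tracial \(\mathcal{Z}\)-absorption application, each summand becomes \(\precsim \af^j(a')\), so the covering defect is \(\precsim \bigoplus_{j=0}^N \af^j(a')\). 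The main obstacle, which the controlled hypothesis is designed to overcome, is to fit all of this into \(a\): with \(N\) now fixed, Lemma~\ref{L_8825_DFSub} and \cite[Lemma~2.4]{Ph14} let me choose \(a'\), \(z\), and \(b\) small enough that \(b \oplus \big(\bigoplus_{\mathrm{end}} z\big) \oplus \big(\bigoplus_{j=0}^N \af^j(a')\big) \precsim a\), which gives Condition~\ref{lem7_9item1}.
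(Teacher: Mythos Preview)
Your tapered-tower construction and the treatment of conditions~\ref{lem7_9item2} and~\ref{lem7_9item3} match the paper's proof in outline, but there is a genuine gap in your argument for~\ref{lem7_9item1}, specifically in the ``covering defect'' term $\sum_j c_j\, e_j^{1/2} x\,(1-\af^j(\ph(1)))\,x\, e_j^{1/2}$. You write that moving the $j$-th summand to the corner $\overline{e_0 A e_0}$ and ``arranging $\ph(1)\approx 1$ there via the single tracial $\mathcal{Z}$-absorption application'' makes each summand $\precsim \af^j(a')$. But a single application of Definition~\ref{deftza} produces the Cuntz estimate $(\xi^2-\xi\ph(1)\xi-\et)_+\precsim a'$ for \emph{one} chosen $\xi$; after conjugating by $\af^{-j}$ the elements in play are $\af^{-j}(x)$, which vary with~$j$, so a single choice of $\xi$ does not directly control all $N+1$ of them. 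You never say what $\xi$ is. If you intend $\xi=e_0^{1/2}$, you must then swap $\af^{-j}(x)$ past both $e_0^{1/2}$ and $\ph(1)$ before the conjugation trick $v^* c v\precsim c$ applies; this in turn forces you to include all $\af^{-j}(x)$ in the sets fed to the Rokhlin property and to tracial $\mathcal{Z}$-absorption, and to track errors of total size roughly $N^2\et$. None of this is in your sketch. A second, minor, circularity: you say ``with $N$ now fixed \ldots choose $a',z,b$'', but $N$ depends on $n_0$, which depends on~$z$; in fact only the ramp length~$L$ (not $N$) governs how many copies of~$z$ you need, so $z$ must be fixed from~$L$ before $n_0$ and $N$ are determined.

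The paper avoids the whole issue by a device you do not use: Lemma~\ref{L_8612_AppInv} produces a positive contraction $h\in A$ with $\|hx-x\|<\et$ and $\|\af^k(h)-h\|<\et$ for $k=1,\ldots,N$. Both the Rokhlin comparison and the tracial $\mathcal{Z}$-absorption are then taken with $h$ in place of~$x$, so one gets $(h^2-h\ph(1)h-\et)_+\precsim b$ for the single element~$h$, and the approximate invariance $\af^j(h)\approx h$ lets this one estimate be transported through every $\af^j$ via $\af^j(h^2-h\ph(1)h)\approx h^2-h\af^j(\ph(1))h$. The covering defect then telescopes exactly as in Lemma~\ref{lem_fg}, with $h$ playing the role of the $\af$-invariant $x$ there. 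Inserting this approximately invariant $h$ is the main new ingredient over the finite-group case, and it is what makes your ``single application'' remark actually work.
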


\begin{proof}
Let $F$, $\varepsilon$, $a$, $n$, and $x$ be as in the statement.
We may assume that $\| b \| \leq 1$ for all $b \in F$
and $\varepsilon < 1$.
Choose $M \in \mathbb{N}$ such that
$\tfrac{1}{M} < \tfrac{\varepsilon^{2}}{256}$.
By \cite[Lemma~2.1]{Ph14},
there is a nonzero positive element $a_{0} \in A$
with
\begin{equation}\label{Eq_8615_a0Sub}
a_{0} \otimes 1_{2 M + 2} \precsim a
\end{equation}
in $M_{\infty} (A)$.
Choose $n_{0} \in \mathbb{N}$ for $z = a_{0}$
according to Definition~\ref{defwtrpaut_cont}.
Choose $N \in \mathbb{N}$
such that $N > \max (2 M, n_{0} )$.
Choose $\delta > 0$ such that
\[
\dt \leq \min \left( 1, \, \frac{\ep}{12 N + 22},
          \, \frac{\ep^2}{64 N^2} \right).
\]

Use \cite[Proposition 2.5]{KW04})
to choose $\eta_0 > 0$ such that
whenever $\varphi \colon M_{n} \to A$
is a c.p.c.\  map
such that $\|\varphi (y) \varphi (z) \| < \eta_0$ for
all $y, z \in (M_{n})_{+}$ with $y z = 0$ and $\|y\|, \|z\| \leq 1$
(a c.p.c.\  $\et_0$-order zero map),
then there is a c.p.c.~order zero map
$\psi \colon M_{n} \to A$
such that $\|\varphi (z) - \psi (z) \| < \delta$
for all $z \in M_{n}$ with $\| z \| \leq 1$.

Use \cite[Lemma~2.5]{AP16} to choose
$\eta_1 > 0$ such that whenever $D$ is a C*-algebra
and $h, k \in D$ satisfy $0 \leq h, k \leq 1$
and $\| [h, k]\| < \eta_1$, then $\| [h^{1/2}, k] \| < \delta$.
Now set
$\eta
 = \min \big( \eta_0, \eta_1, \delta, \tfrac{1}{M},
      \tfrac{\varepsilon}{8} \big)$.

We claim that there is $b \in A_{+} \setminus \{ 0 \}$ such that:
\begin{enumerate}
\setcounter{enumi}{\value{TmpEnumi}}
\item\label{lem7_9item4}
$\bigoplus_{j = 0}^{N} \alpha^{j} (b) \precsim a_{0}$
in $M_{\infty} (A)$.
\setcounter{TmpEnumi}{\value{enumi}}
\end{enumerate}
To prove the claim,
since $A$ is not type~I,
we can use \cite[Lemma~2.1]{Ph14} to find
$c \in A_{+} \setminus \{ 0 \}$ such that
$c \otimes 1_{N + 1} \precsim a$ in $M_{\infty} (A)$.
Then \cite[Lemma~2.4]{Ph14} implies that there is
$b \in A_{+} \setminus \{ 0 \}$
such that $b \precsim \alpha^{- j} (c)$
for $j = 0, 1, \ldots, N$.
Thus
\[
\bigoplus_{j = 0}^{N} \alpha^{j} (b)
 \precsim \bigoplus_{j = 0}^{N} c
 = c \otimes 1_{N}
 \precsim a_{0}.
\]
The claim is proved.

Use Lemma~\ref{L_8612_AppInv}
to choose $h \in A$
such that $0 \leq h \leq 1$,
$\| h x - x \| < \eta$,
and $\| \alpha^k (h) - h \| < \eta$
for $k = 1, 2, \ldots, N$.

Since $\alpha$ has the controlled weak tracial Rokhlin property,
there are orthogonal positive contractions
$e_{0}, e_{1}, \ldots, e_{N} \in A$ such that,
with $e = e_{0} + e_{1} + \cdots + e_{N}$, the following hold:
\begin{enumerate}
\setcounter{enumi}{\value{TmpEnumi}}
\item\label{lem7_9item5}
$\| [e_{j}, y] \| < \eta$ for $j = 0, 1, \ldots, N$
and all $y \in F \cup \{h\}$.
\item\label{lem7_9item6}
$\|\alpha (e_{j}) - e_{j + 1} \| < \eta$ for $j = 0, 1, \ldots, N - 1$.
\item\label{lem7_9item7}
$(h^{2} - h e h - \eta)_{+} \precsim b$.
\item\label{defwtrpaut_it18}
$e_{j} \precsim b \precsim a_{0}$ for $j = 0, 1, \ldots, N$.
\setcounter{TmpEnumi}{\value{enumi}}
\end{enumerate}
Set
\[
E = \big\{ \alpha^{- j} (e_{k}),
 \, \alpha^{- j} \big( e_{k}^{1/2} \big), \, \alpha^{- j} (y)
\colon
 {\mbox{$0 \leq j, k \leq N$
        and $y \in F \cup \{h\}$}} \big\},
\]
which is a finite subset of~$A$.
Since $A$ is tracially $\mathcal{Z}$-absorbing there is
a c.p.c.~order zero map $\varphi \colon M_{n} \to A$
such that the following hold:
\begin{enumerate}
\setcounter{enumi}{\value{TmpEnumi}}
\item\label{lem7_9item8}
$\big( h^{2} - h \varphi (1) h - \eta \big)_{+} \precsim b$.
\item\label{lem7_9item9}
$\| [\varphi (z), y] \| < \tfrac{\eta}{N}$
for all $z \in M_{n}$ with $\|z\| \leq 1$
and all $y \in E$.
\setcounter{TmpEnumi}{\value{enumi}}
\end{enumerate}

For $j \in {\mathbb{Z}}$ define
\begin{equation*}
f_{j} =
\begin{cases}
0
& \hspace*{1em} j \leq 0
\\
(j / M) e_{j}
& \hspace*{1em} 1 \leq j \leq M - 1
\\
e_{j}
& \hspace*{1em} M \leq j \leq N - M
\\
[ (N - j) / M] e_{j}
& \hspace*{1em} N - M  + 1 \leq j \leq N - 1
\\
0
& \hspace*{1em} N \leq j.
\end{cases}
\end{equation*}
Using~(\ref{lem7_9item6}),
we get:
\begin{enumerate}
\setcounter{enumi}{\value{TmpEnumi}}
\item\label{lem7_9item11n}
$\| \af (f_j) - f_{j + 1} \| < \frac{1}{M} + \et$
for $j = 0, 1, \ldots, M - 1$
and $j = N - M, \, N - M + 1, \, \ldots, N - 1$,
and
$\| \af (f_j) - f_{j + 1} \| < \et$
for all other $j \in {\mathbb{Z}}$.
\setcounter{TmpEnumi}{\value{enumi}}
\end{enumerate}
Define a c.p.c.~map $\widetilde{\varphi} \colon M_{n} \to A$ by
\[
\widetilde{\varphi} (z)
 = \sum_{j = 0}^{N} f_{j}^{1/2} \alpha^{j} (\varphi (z)) f_{j}^{1/2}
\]
for $z \in M_n$.
By \eqref{lem7_9item9} and the definition of $f_j$,
for $j = 0, 1, \ldots, N$
and $z \in M_{n}$ with $\| z \| \leq 1$,
we have
\begin{equation}\label{Eq_8615_CommFull}
\| [ f_j, \, \af^j ( \ph (z)) ] \|
  \leq \| [ e_j, \, \af^j ( \ph (z)) ] \|
  = \| [ \af^{- j} ( e_j), \, \ph (z) ] \|
  < \frac{\et}{N}
\end{equation}
and similarly
%
\begin{equation}\label{Eq_8615_CommHalf}
\big\| \big[ f_j^{1/2}, \, \af^j ( \ph (z)) \big] \big\|
  < \frac{\et}{N}.
\end{equation}
It follows from~\eqref{Eq_8615_CommHalf}
that:
\begin{enumerate}
\setcounter{enumi}{\value{TmpEnumi}}
\item\label{lem7_9item12n}
$\big\| \widetilde{\varphi} (z)
    - \sum_{j = 1}^{N}
         f_{j} \alpha^{j} (\varphi (z)) \big\|
 < \eta$
for any $z \in M_{n}$ with $\| z \| \leq 1$.
\setcounter{TmpEnumi}{\value{enumi}}
\end{enumerate}
Together with orthogonality of the elements~$f_j$
and the fact that $\ph$ has order zero,
\eqref{Eq_8615_CommFull}~implies that
for $y, z \in (M_{n})_{+}$ with $\| y \|, \, \| z \| \leq 1$
and $y z = 0$,
we have
$\| \widetilde{\varphi} (y) \widetilde{\varphi} (z) \|
  < \eta \leq \eta_0$.
By the choice of $\eta_0$, there is a c.p.c.~order zero map
$\psi \colon M_{n} \to A$ such that:
\begin{enumerate}
\setcounter{enumi}{\value{TmpEnumi}}
\item\label{lem7_9item13n}
$\|\widetilde{ \varphi} (z) - \psi (z) \| < \delta$
for any $z \in M_{n}$ with $\| z \| \leq 1$.
\setcounter{TmpEnumi}{\value{enumi}}
\end{enumerate}
We will show that $\psi$ has
properties \ref{lem7_9item1}, \ref{lem7_9item2},
and~\ref{lem7_9item3} in the statement.

We prove \ref{lem7_9item1}.
Define
\[
w = \sum_{j = 0}^{M - 1}
       \left( 1 - \frac{j}{M} \right) e_{j}^{1/2} h^2 e_{j}^{1/2}
    + \sum_{j = N - M + 1}^{N}
       \left( 1 - \frac{N - j}{M} \right) e_{j}^{1/2} h^2 e_{j}^{1/2}.
\]
Then by \eqref{defwtrpaut_it18} and~(\ref{lem7_9item4}) we have
\begin{equation}\label{Eq_8615_wPrec}
w \precsim a_{0} \otimes 1_{2 M},
\end{equation}
and, using~\eqref{lem7_9item5}, $\et \leq \et_1$,
and the choice of $\et_1$ at the third step, we have
\begin{align}\label{Eq_8615_wEst}
\Bigg\| h e h
  - \sum_{j = 0}^{N} f_{j}^{1/2} h^{2} f_{j}^{1/2} - w \Bigg\|
& = \Bigg\| \sum_{j = 0}^{N} h e_j h
  - \sum_{j = 0}^{N} e_{j}^{1/2} h^{2} e_{j}^{1/2} \Bigg\|
\\
& \leq \sum_{j = 0}^{N} 2 \big\| \big[ e_{j}^{1/2}, h \big] \big\|
  < 2 (N + 1) \dt.
\notag
\end{align}
Furthermore,
for $j = 1, 2, \ldots, N$ we have
\begin{equation}\label{Eq_8615_hajh}
\big\| \af^j (h^2 - h \ph (1) h)
     - [ h^2 - h \af^j (\ph (1)) h ] \big\|
 \leq 4 \| \af^j (h) - h \|
 < 4 \et.
\end{equation}
Now,
using \eqref{Eq_8615_hajh} at the second step,
using \eqref{Eq_8615_wEst} at the third step,
using using~\eqref{lem7_9item5}, $\et \leq \et_1$,
the choice of $\et_1$,
and the definition of $f_{j}$ at the fourth step,
and using \eqref{lem7_9item13n} at the fifth step,
we have
\begin{align*}
& (h^{2} - h e h - \eta)_{+}
 + \sum_{j = 0}^{N} f_{j}^{1/2} \alpha^{j} \left((h^{2}
 - h \varphi (1) h - \eta)_{+} \right) f_{j}^{1/2}
\\
& \hspace*{3em} {\mbox{}}
\approx_{(N + 2) \eta}
(h^{2} - h e h)+ \sum_{j = 0}^{N} f_{j}^{1/2} \alpha^{j} (h^{2}
 - h \varphi (1) h) f_{j}^{1/2}
\\
& \hspace*{3em} {\mbox{}}
\approx_{4 (N + 1) \eta}
(h^{2} - h e h) + \sum_{j = 0}^{N} f_{j}^{1/2} h^{2} f_{j}^{1/2}
   - \sum_{j = 0}^{N}
            f_{j}^{1/2} h \alpha^{j} (\varphi (1)) h f_{j}^{1/2}
\\
& \hspace*{3em} {\mbox{}}
\approx_{2 (N + 1) \delta}
h^{2} - w
 - \sum_{j = 0}^{N} f_{j}^{1/2} h \alpha^{j} (\varphi (1)) h f_{j}^{1/2}
\\
& \hspace*{3em} {\mbox{}}
\approx_{2 (N + 1) \delta}
h^{2} - w
 - \sum_{j = 0}^{N} h f_{j}^{1/2} \alpha^{j} (\varphi (1)) f_{j}^{1/2} h
\\
& \hspace*{3em} {\mbox{}}
\approx_{\delta}
h^{2} - h \psi (1) h - w.
\end{align*}
Thus we get
\begin{align*}
&
\Bigg\| (h^{2} - h \psi (1) h)
\\
& \hspace*{4em} {\mbox{}}
 - \Bigg[ w + (h^{2} - h e h - \eta)_{+}
 + \sum_{j = 0}^{N} f_{j}^{1/2}
\alpha^{j} \left((h^{2} - h \varphi (1) h - \eta)_{+} \right)
     f_{j}^{1/2} \Bigg] \Bigg\|
\\
& \hspace*{2em} {\mbox{}}
 < (N + 2) \eta + 4 (N + 1) \eta
     + 2 (N + 1) \delta + 2 (N + 1) \delta + \delta
\\
& \hspace*{2em} {\mbox{}}
 \leq (6 N + 11) \delta
 \leq \frac{\varepsilon}{2}.
\end{align*}
Then, using \cite[Lemma~2.3]{AGJP21}, at the first step,
using \eqref{Eq_8615_wPrec}, \eqref{lem7_9item7},
and \eqref{lem7_9item8} at the third step,
using \eqref{lem7_9item4} at the fourth step,
and using \eqref{Eq_8615_a0Sub} at the fifth step, we have
\begin{align*}
\big[ x ( h^2 - h \ps (1) h ) x - \tfrac{\varepsilon}{2} \big]_{+}
&
\precsim \big( h^{2} - h \psi (1) h - \tfrac{\varepsilon}{2} \big)_{+}
\\
&
\precsim w + (h^{2} - h e h - \eta)_{+}
\\
& \hspace*{3em} {\mbox{}}
   + \sum_{j = 0}^{N} f_{j}^{1/2}
      \alpha^{j} \left((h^{2} - h \varphi (1) h - \eta)_{+} \right)
     f_{j}^{1/2}
\\
&
\precsim (a_{0} \otimes 1_{2 M}) \oplus b
 \oplus \bigoplus_{j = 1}^{N} \alpha^{j} (b)
\\
& \precsim (a_{0} \otimes 1_{2 M + 2})
\\
& \precsim  a.
\end{align*}
Using the choice of~$h$
and $\eta \leq \tfrac{\varepsilon}{8}$,
we have
\[
\big\| x ( h^2 - h \ps (1) h ) x - [x^2 - x \ps (1) x ] \big\|
 < 4 \et
 \leq \frac{\varepsilon}{2}.
\]
So \cite[Corollary 1.6]{Ph14}
implies
\[
\big( x^2 - x \ps (1) x - \varepsilon \big)_{+}
 \precsim
   \big[ x ( h^2 - h \ps (1) h ) x - \tfrac{\varepsilon}{2} \big]_{+}
 \precsim  a,
\]
as desired.

To prove \ref{lem7_9item2}, let $y \in {F}$
and let $z \in M_{n}$ satisfy $\| z \| \leq 1$.
Using \eqref{lem7_9item13n} at the first step,
using \eqref{lem7_9item12n} and $f_0 = 0$ at the second step,
using \eqref{lem7_9item9}, \eqref{lem7_9item5},
and the definition of $f_j$ at the third step,
and
using $\et \leq \dt$ at the fourth step,
we get
\begin{align*}
\|[\psi (z), y] \|
& < 2 \delta + \| [\widetilde{\varphi} (z), \, y ] \|
\\
& \leq2 \delta + 2 \eta +
  \Bigg\| \sum_{j = 1}^{N}
        \big[ f_{j} \alpha^{j} (\varphi (z)), \, y \big] \Bigg\|
\\
& < 2 \delta + 2 \eta + \eta + N \eta
  < 6 N \delta
  < \varepsilon,
\end{align*}
as desired.

Finally, we prove~\ref{lem7_9item3}.
In preparation for the estimate~\eqref{Eq_8615_SqErr} below,
we estimate the expressions
\[
\big\| (\alpha (f_{j}) - f_{j + 1}) (\alpha (f_{k}) - f_{k + 1}) \big\|
\]
for $j, k = 0, 1, \ldots, N - 1$.
If $j \neq k$,
then,
using \eqref{lem7_9item6}
and orthogonality of $e_0, e_1, \ldots, e_N$ at the fourth step,
\begin{align*}
\big\| (\alpha (f_{j}) - f_{j + 1}) (\alpha (f_{k}) - f_{k + 1}) \big\|
& =  \big\| - \alpha (f_{j}) f_{k + 1} - f_{j + 1} \alpha (f_{k}) \big\|
\\
& \leq \| \alpha (f_{j}) f_{k + 1} \| + \| f_{j + 1} \alpha (f_{k}) \|
\\
& \leq \| \alpha (e_{j}) e_{k + 1} \| + \| e_{j + 1} \alpha (e_{k}) \|
\\
& < 2 \eta
  \leq 2 \dt.
\end{align*}
If $j = k$,
we use~\eqref{lem7_9item11n}.
If $0 \leq j \leq M - 1$
or $N - M \leq j \leq N - 1$,
then
\[
\big\| \big( \alpha (f_{j}) - f_{j + 1} \big)^2 \big\|
  < \left( \frac{1}{M} + \et \right)^2
  \leq \frac{4}{M^2},
\]
while otherwise
\[
\big\| \big( \alpha (f_{j}) - f_{j + 1} \big)^2 \big\|
  < \et^2
  \leq \dt^2
  \leq \dt.
\]

Next, let $z \in M_{n}$ satisfy $\| z \| \leq 1$.
Then
\begin{align}\label{Eq_8615_SqErr}
& \Bigg\| \sum_{j = 0}^{N - 1} \alpha (f_j) \alpha^{j + 1} (\varphi (z))
       - \sum_{j = 0}^{N - 1}
           f_{j + 1} \alpha^{j + 1} (\varphi (z)) \Bigg\|^2
\\
& \hspace*{2em} {\mbox{}}
 = \Bigg\| \sum_{j, k = 0}^{N - 1} \alpha^{j + 1} (\varphi (z))^*
   (\alpha (f_{j}) - f_{j + 1}) (\alpha (f_{k}) - f_{k + 1})
      \alpha^{k + 1} (\varphi (z)) \Bigg\|
\notag
\\
& \hspace*{2em} {\mbox{}}
 \leq \sum_{j, k = 0}^{N - 1}
          \| \alpha^{j + 1} (\varphi (z)) \|
           \big\| (\alpha (f_{j}) - f_{j + 1})
                (\alpha (f_{k}) - f_{k + 1}) \big\|
           \| \alpha^{k + 1} (\varphi (z)) \|
\notag
\\
& \hspace*{2em} {\mbox{}}
 < 2 N (N - 1) \dt + 2 M \left( \frac{4}{M^2} \right)
         + (N - 2 M) \dt
\notag
\\
& \hspace*{2em} {\mbox{}}
 \leq 2 N^2 \dt + \frac{8}{M}
  \leq \frac{\ep^2}{32} + \frac{\ep^2}{32}
  = \frac{\ep^2}{16}.
\notag
\end{align}
%
Therefore
%
\[
\Bigg\|\alpha
 \Bigg(\sum_{j = 0}^{N - 1} f_{j} \alpha^{j} (\varphi (z)) \Bigg)
 - \sum_{j = 0}^{N - 1}
  f_{j + 1} \alpha^{j + 1} (\varphi (z)) \Bigg\|
 < \frac{\ep}{4}.
\]
Combining this estimate with two applications each
of \eqref{lem7_9item12n} and~\eqref{lem7_9item13n},
and using $f_0 = f_N = 1$,
we get
\[
\|\alpha (\psi (z)) - \psi (z) \|
 < 2 \delta + 2 \eta + \tfrac{\varepsilon}{4}
 \leq \tfrac{\varepsilon}{4}
    + \tfrac{\varepsilon}{4} + \tfrac{\varepsilon}{4}
 < \varepsilon.
\]
This finishes the proof.
\end{proof}

\begin{theorem}\label{thm_int}
Let $A$ be a simple tracially $\mathcal{Z}$-absorbing C*-algebra and let
$\alpha \in \mathrm{Aut} (A)$ have the controlled weak tracial
Rokhlin property.
Then $C^{*} (\mathbb{Z}, A, \alpha)$ is also a simple
tracially $\mathcal{Z}$-absorbing C*-algebra.
If moreover $A$ is separable and nuclear 
then   $C^{*} (\mathbb{Z}, A, \alpha)$ is $\mathcal{Z}$-stable.
\end{theorem}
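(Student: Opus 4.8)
The plan is to mimic the proof of Theorem~\ref{thm_fg}, now with $G = \mathbb{Z}$ and with Lemma~\ref{lem_int} in the role that Lemma~\ref{lem_fg} plays there. Simplicity is immediate: as observed just before Remark~\ref{rmk_cwtrp}, the controlled weak tracial Rokhlin property implies the weak tracial Rokhlin property, so Corollary~\ref{C_8613_CrPrdSimple} gives that $C^{*}(\mathbb{Z}, A, \alpha)$ is simple. Writing $B = C^{*}(\mathbb{Z}, A, \alpha)$, it remains to verify Definition~\ref{deftza} for $B$; note that, unlike the finite group case, no fixed point algebra intervenes here.

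So let $F \subseteq B$ be finite, let $x, a \in B_{+}$ with $a \neq 0$, let $\varepsilon > 0$, and let $n \in \mathbb{N}$. As in the proof of Theorem~\ref{thm_fg}, the nonunital version of the argument of \cite[Lemma~5.1]{HO13} provides $a_{0} \in A_{+} \setminus \{0\}$ with $a_{0} \precsim a$ in $B$. Any approximate identity $(u_{i})_{i \in I}$ for $A$ is also an approximate identity for $B$ (since $c u^{k} \cdot u_{i} = c\, \alpha^{k}(u_{i})\, u^{k} \to c u^{k}$ for $c \in A$ and $k \in \mathbb{Z}$), so $A$ contains an approximate identity for $B$; by \cite[Remark~3.8]{AGJP21} I may therefore assume $x \in A_{+}$, and, by the usual rescaling (cf.\ the proof of Lemma~\ref{L_8808_EltsSmall}), that $\| x \| \leq 1$. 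Finally, I approximate along the canonical dense subalgebra: I fix $\varepsilon_{1} > 0$ (to be chosen) and, for each $y \in F$, a Fourier polynomial $y' = \sum_{k = -K}^{K} c_{y, k} u^{k}$ with $c_{y, k} \in A$ and $\| y - y' \| < \varepsilon_{1}$, with $K$ common to all $y \in F$. This determines $K$ and the finite set $E = \{ c_{y, k} \colon y \in F, \ |k| \leq K \} \subseteq A$, both independently of the tolerance chosen next.

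With $K$ fixed, I apply Lemma~\ref{lem_int} with $E$ in place of $F$, with $a_{0}$ in place of $a$, with the reduced $x$ and the given $n$, and with a tolerance $\delta > 0$ to be chosen small in terms of $K$, $\max_{y, k} \| c_{y, k} \|$, $\varepsilon_{1}$, and $\varepsilon$. This yields a c.p.c.\ order zero map $\psi_{0} \colon M_{n} \to A$; let $\psi \colon M_{n} \to B$ be $\psi_{0}$ followed by the inclusion $A \hookrightarrow B$. Condition~\eqref{deftza-it1} of Definition~\ref{deftza} holds because $\psi(1) = \psi_{0}(1)$ and, taking $\delta \leq \varepsilon$, Lemma~\ref{lem_int}\ref{lem7_9item1} gives $(x^{2} - x \psi(1) x - \varepsilon)_{+} \precsim (x^{2} - x \psi(1) x - \delta)_{+} \precsim a_{0} \precsim a$ in $B$. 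The heart of the verification is condition~\eqref{deftza-it2}, which needs the commutator of $\psi(z)$ with powers of the canonical unitary: since $u^{k} \psi(z) u^{-k} = \alpha^{k}(\psi(z))$, we get $\| [\psi(z), u^{k}] \| = \| \psi(z) - \alpha^{k}(\psi(z)) \| \leq |k|\, \delta$ by telescoping Lemma~\ref{lem_int}\ref{lem7_9item3} (each of the $|k|$ steps costs at most $\delta$ since the $\alpha^{j}$ are isometric). Expanding $[\psi(z), c_{y, k} u^{k}] = [\psi(z), c_{y, k}] u^{k} + c_{y, k} [\psi(z), u^{k}]$, summing over $|k| \leq K$, and using Lemma~\ref{lem_int}\ref{lem7_9item2} yields $\| [\psi(z), y'] \| \leq (2K + 1)\delta + (2K + 1) K \big( \max_{y, k} \| c_{y, k} \| \big) \delta$, whence $\| [\psi(z), y] \| \leq 2 \varepsilon_{1} + \| [\psi(z), y'] \| < \varepsilon$ once $\varepsilon_{1}$ and $\delta$ are small enough.

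The $\mathcal{Z}$-stability statement then follows exactly as in Corollary~\ref{cor_fg_zs}: if $A$ is separable and nuclear, then $B$ is separable (as $\mathbb{Z}$ is countable) and nuclear (as $\mathbb{Z}$ is amenable), and $B$ is simple and tracially $\mathcal{Z}$-absorbing by the first part, so the main result of \cite{CLS2021} gives that $B$ is $\mathcal{Z}$-stable. I expect no genuine obstacle beyond bookkeeping: the one point requiring care is to fix $K$ from the Fourier approximation \emph{before} selecting $\delta$ in Lemma~\ref{lem_int}, so that the factor $|k| \leq K$ appearing in the telescoping bound $\| [\psi(z), u^{k}] \| \leq |k|\,\delta$ remains controlled. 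All the real work has been done in Lemma~\ref{lem_int}, in particular its invariance estimate~\ref{lem7_9item3}.
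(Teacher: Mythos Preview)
Your proposal is correct and follows essentially the same approach as the paper's own proof: reduce to $x \in A$ via an approximate identity, pull $a$ back to $a_0 \in A$ via the nonunital version of \cite[Lemma~5.1]{HO13}, approximate elements of $F$ by Fourier polynomials of bounded degree~$K$, apply Lemma~\ref{lem_int} with a tolerance depending on~$K$, and telescope Lemma~\ref{lem_int}\ref{lem7_9item3} to control $\| [\psi(z), u^k] \| \leq |k|\,\delta$. The paper organizes the constants slightly differently (fixing the Fourier error at $\varepsilon/4$ and solving for an explicit $\varepsilon_0$), but the logic and the key observation---fix $K$ before choosing the tolerance in Lemma~\ref{lem_int}---are identical.
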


The proof is similar to that of Theorem~\ref{thm_fg},
but requires several more steps.

\begin{proof}[Proof of Theorem~\ref{thm_int}]
Proposition~\ref{outer} and
\cite[Theorem 3.1]{Ki81}
imply that $C^{*} ({\mathbb{Z}}, A, \alpha)$ is simple.

To verify \cite[Definition~3.6]{AGJP21}, for $C^{*} ({\mathbb{Z}}, A, \alpha)$,
let $F \subseteq C^{*} ({\mathbb{Z}}, A, \alpha)$ be a finite set,
let $x, a \in C^{*} ({\mathbb{Z}}, A, \alpha)_{+}$ with $a \neq0$,
let $\varepsilon > 0$, and let $n \in \mathbb{N}$.
Without loss of generality $\ep < 1$.
The proof of \cite[Lemma~5.1]{HO13}
also works in the nonunital case,
and we can apply this generalization of it
to find $a_{0} \in A_{+} \setminus \{ 0 \}$
such that $a_{0} \precsim a$ in $C^{*} ({\mathbb{Z}}, A, \alpha)$.
Next,
let $(u_{i})_{i \in I}$ be an approximate identity for~$A$.
Then $(u_{i})_{i \in I}$ is also
an approximate identity for~$C^{*} ({\mathbb{Z}}, A, \alpha)$.
Therefore, by \cite[Remark~3.8]{AGJP21},
we may assume that $x \in A$.

Let $u \in M ( C^{*} ({\mathbb{Z}}, A, \alpha) )$
be the standard unitary
associated with the crossed product.
Choose $M \in {\mathbb{N}}$
such that for $y \in F$ there are
\[
c_{y, \, -M}, \, c_{y, \, - M + 1}, \, \ldots, \, c_{y, M} \in A
\]
satisfying
\[
\Bigg\| y - \sum_{m = - M}^{M} c_{y, m} u^m \Bigg\| < \frac{\ep}{4}.
\]
Define
\[
E = \big\{ c_{y, m} \colon
         {\mbox{$y \in F$
           and $- M \leq m \leq M$}} \big\}
\andeqn
R = \sup_{c \in E} \| c \|.
\]
Set
\[
\ep_0 = \frac{\ep}{2 (R M + 1) (2 M + 1)}.
\]
Apply Lemma~\ref{lem_int} with $E$ in place of $F$,
with $a_{0}$ in place of $a$, with $x$ as given,
and with $\varepsilon_0$ in place of $\varepsilon$.
We obtain a c.p.c.~order zero map $\psi_0 \colon M_{n} \to A$.
Let $\psi \colon A \to C^{*} ({\mathbb{Z}}, A, \alpha)$
be its composition
with the inclusion of $A$ in $C^{*} ({\mathbb{Z}}, A, \alpha)$.
We claim that $\psi$ satisfies
the conditions of \cite[Definition~3.6]{AGJP21},
for $C^{*} ({\mathbb{Z}}, A, \alpha)$.
Condition~(1) is clear.
For condition~(2), let $z \in M_n$ satisfy $\| z \| \leq 1$ and let $y \in F$.
Then for all $m \in {\mathbb{Z}}$
we have
$\| \af^m (\psi (z)) - \psi (z) \| \leq | m | \ep_0$.
Therefore
\begin{align*}
\| [ \psi (z), \, y ] \|
& \leq 2 \Bigg\| y - \sum_{m = - M}^{M} c_{y, m} u^m \Bigg\|
  + \sum_{m = - M}^{M} \| [ \psi (z), \, c_{y, m} u^m ] \|
\\
& < \frac{\ep}{2}
      + \sum_{m = - M}^{M} \| [ \psi (z), \, c_{y, m} ] \| \cdot\| u^m \|
      + \sum_{m = - M}^{M} \| c_{y, m} \|\cdot \| [ \psi (z), \, u^m ] \|
\\
& < \frac{\ep}{2} + (2 M + 1) \ep_0
      + R \sum_{m = - M}^{M} | m | \ep_0
\\
& \leq \frac{\ep}{2} + (2 M + 1) \ep_0 + R (2 M + 1) M \ep_0
   = \ep.
\end{align*}
This completes the proof first part of the statement. The second part follows from
the first part and \cite[Theorem~A]{CLS2021}.
\end{proof}

\begin{remark}\label{rmk_cont}
As discussed in the introduction to this section,
exactness,
or at least the assumption that every quasitrace is a trace,
is needed for the proof of \cite[Theorem 6.7]{HO13}
to be valid.
With this correction,
\cite[Theorem 6.7]{HO13}, at least for the stably finite case,
is a corollary of Theorem~\ref{thm_int},
by Proposition~\ref{P_8616_GenToWeakControl}.
Moreover, we don't need the separability assumption
in \cite[Theorem 6.7]{HO13}.
\end{remark}

In fact, one can say a little more.

\begin{remark}\label{cor_int_u}
Let $A$ be a simple unital
C*-algebra and let $\alpha \in \mathrm{Aut} (A)$.
Define the
{\emph{controlled generalized tracial Rokhlin property}}
by modifying \cite[Definition 6.1]{HO13}
in the same way that Definition~\ref{defwtrpaut_cont}
modifies Definition~\ref{defwtrpaut}.
If $A$ is tracially $\mathcal{Z}$-absorbing
and $\alpha$ has this property,
then  $C^{*} (\mathbb{Z}, A, \alpha)$
is also a simple tracially $\mathcal{Z}$-absorbing
C*-algebra.
To prove this,
let $\alpha \in \mathrm{Aut} (A)$
have the controlled generalized tracial Rokhlin property.
By \cite[Proposition~6.3]{HO13},
$\alpha^{m}$ is outer for all $m \in \mathbb{Z} \setminus \{ 0 \}$,
and hence $C^{*} (\mathbb{Z}, A, \alpha)$ is simple
by \cite[Theorem 3.1]{Ki81}.
Note that $\alpha$ satisfies
Conditions~\eqref{defwtrpaut_it1}, \eqref{defwtrpaut_it2},
and~\eqref{defwtrpaut_it3} in
Definition~\ref{defwtrpaut}
(but not necessarily Condition~\eqref{defwtrpaut_it4} there).
However, the proof of Lemma~\ref{lem_int} works for
$\alpha$ (with $x = 1$),
since Condition~\eqref{defwtrpaut_it4} of Definition~\ref{defwtrpaut}
is not used in the proof of that lemma.
Using  \cite[Lemma~3.3]{AGJP21},
the proof of Theorem~\ref{thm_int}
again works in this setting.
\end{remark}

The proof of the following proposition is  
similar to that of \cite[Theorem~4.5]{FG17}
except that we need Lemma~\ref{L_8612_AppInv}
instead of using the finiteness of the group.

\begin{proposition}\label{Prop_tensor}
Let $A$ and $B$ be simple C*-algebras and let
$\alpha\in\mathrm{Aut}(A)$ and $\beta\in\mathrm{Aut}(B)$.
If  $\alpha$  has the (controlled) weak tracial Rokhlin property, then
so does the automorphism $\alpha\otimes \beta$ of
$A\otimes_{\min} B$.
\end{proposition}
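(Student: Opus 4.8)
The plan is to follow the proof of \cite[Theorem~4.5]{FG17}: I would thicken a Rokhlin tower for $\alpha$ in $A$ to one for $\alpha\otimes\beta$ in $A\otimes_{\min}B$ by tensoring with a single positive element of~$B$. The one genuinely new point is that, $\mathbb{Z}$ being infinite, the approximately $\beta$-invariant element of~$B$ cannot be produced by averaging over the group and must instead come from Lemma~\ref{L_8612_AppInv}.

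First I would reduce the data. Since $\bigl\|\bigl[\,\cdot\,,\sum_i a_i\otimes b_i\bigr]\bigr\|\le\sum_i\|[\,\cdot\,,a_i\otimes b_i]\|$, I may take $F$ to be a finite set of elementary tensors $a\otimes b$. Because $(u_i\otimes v_\lambda)$ is an approximate identity for $A\otimes_{\min}B$ whenever $(u_i)$ and $(v_\lambda)$ are approximate identities for $A$ and $B$, a reduction of the variable $y$ to approximate-identity elements (the analogue for $\mathbb{Z}$ of Lemma~\ref{L_8809_UseAppId}) lets me assume $y=y_A\otimes y_B$. In the controlled case I would also fix $z\in(A\otimes_{\min}B)_+\setminus\{0\}$ and use Kirchberg's slice lemma (\cite[Lemma~4.1.9]{Ro02}) to choose nonzero $z_A\in A_+$ and $z_B\in B_+$ with $z_A\otimes z_B\precsim z$.

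Next I would construct the tower. Applying the (controlled) weak tracial Rokhlin property of $\alpha$ in $A$, with the finite set $\{a:a\otimes b\in F\}$, with $y_A$, with a small tolerance, with (in the controlled case) $z_A$ and any $n\ge n_0(z_A)$, and with a suitable $x_A\in A_+$, I obtain orthogonal positive contractions $e_0,\dots,e_n\in A$, with $e=\sum_je_j$, satisfying the translation, commutation and Cuntz estimates in~$A$, and $e_j\precsim z_A$. Then I would apply Lemma~\ref{L_8612_AppInv} to~$B$ with the $\mathbb{Z}$-action generated by~$\beta$ and $S=\{\beta,\beta^{-1}\}$ to find $h\in B_+$ with $0\le h\le1$, with $\|\beta^{\pm1}(h)-h\|$ small, and with $\|[h,b]\|$ small for every $B$-factor~$b$ of~$F$ and of~$y$ (in the controlled case taking $h\in\overline{z_BBz_B}$, so $h\precsim z_B$). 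Setting $f_j=e_j\otimes h$ and $f=e\otimes h$, conditions \eqref{defwtrpaut_it1} and \eqref{defwtrpaut_it2} follow from
\[
\|\alpha(e_j)\otimes\beta(h)-e_{j+1}\otimes h\|\le\|\alpha(e_j)-e_{j+1}\|+\|\beta(h)-h\|
\]
and $\|[e_j\otimes h,\,a\otimes b]\|\le\|[e_j,a]\|\,\|b\|+\|a\|\,\|[h,b]\|$; condition \eqref{defwtrpautc_it5} is immediate from $f_j\precsim z_A\otimes z_B\precsim z$; and for \eqref{defwtrpaut_it3} I would let $h$ act as an approximate unit on~$y_B$, reduce $y^2-yfy$ to $(y_A^2-y_Aey_A)\otimes y_B^2$, apply the $A$-estimate, and absorb the result into a product $x_A\otimes y_B^2\precsim x$ given by the slice lemma.

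The hard part will be condition \eqref{defwtrpaut_it4}, namely $\|fxf\|>1-\varepsilon$ for an arbitrary positive contraction~$x$ of norm one. A tensor $f=e\otimes h$ whose first factor~$e$ is a genuine Rokhlin tower is bounded away from~$1$ on any entangled near-maximal vector of~$x$ (whose $A$-marginal is mixed), so \eqref{defwtrpaut_it4} cannot be forced by making~$f$ large on~$x$ directly. As in \cite[Theorem~4.5]{FG17}, I would obtain it instead in the same way condition \eqref{WTRP-OUT-CO} is obtained for finite groups, by deducing it from the already-verified conditions \eqref{defwtrpaut_it1}--\eqref{defwtrpaut_it3} (cf.\ Lemma~\ref{L_8814_TRP_Fin} and the proof of \cite[Proposition~7.12]{AGJP21}): for a finite simple C*-algebra, Cuntz-smallness of the defect forces $\|fxf\|$ close to one, and the case of an infinite projection is handled separately by the dichotomy between finiteness and proper infiniteness. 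In the controlled case condition \eqref{defwtrpautc_it5} prevents~$h$ from being an approximate unit, so there I would use the staggered tower $f_j=e_j\otimes\beta^j(h)$ (whose translation property persists since $\beta(\beta^j(h))=\beta^{j+1}(h)$) and track $e_j\otimes\beta^j(h)\precsim z_A\otimes\beta^j(z_B)\precsim z$ by the method of Lemma~\ref{lem_int}; reconciling largeness \eqref{defwtrpautc_it4} with smallness \eqref{defwtrpautc_it5} is the most delicate step.
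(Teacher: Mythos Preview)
Your plan for the uncontrolled weak tracial Rokhlin property is essentially the paper's: follow \cite[Theorem~4.5]{FG17} and replace the group-averaging step there by Lemma~\ref{L_8612_AppInv} to obtain the approximately $\beta$-invariant element $s\in B_+$, then set $f_j=e_j\otimes s$. The paper simply says that the rest of the proof of \cite[Theorem~4.5]{FG17} goes through, including condition~\eqref{defwtrpaut_it4}; your detour through a finiteness/proper-infiniteness dichotomy is not needed.

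The genuine gap is in the controlled case. Neither of your two proposals works. First, Lemma~\ref{L_8612_AppInv} gives no control on where $h$ lives: you cannot force $h\in\overline{z_BBz_B}$ while simultaneously having $h$ act as an approximate unit on the $B$-factors of $F$ and~$y$, so ``$h\precsim z_B$'' is unjustified. Second, the staggered tower $f_j=e_j\otimes\beta^j(h)$ does not rescue~\eqref{defwtrpautc_it5}: you would need $\beta^j(h)\precsim z_B$, and there is no reason for that (and nothing in Lemma~\ref{lem_int} supplies such a subequivalence). The idea you are missing is to trade size between the tensor factors. The paper keeps the element $s$ from Lemma~\ref{L_8612_AppInv} large enough to do its approximate-unit job, but passes to $(s-\xi)_+\in\Ped(B)$ for small $\xi>0$; by \cite[Proposition~1.14]{Ph14} there is $m\in\N$ with $(s-\xi)_+\precsim 1_m\otimes z_B$. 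The $m$-fold excess is absorbed on the $A$ side: since $A$ is not type~I (Proposition~\ref{outer}), \cite[Lemma~2.4]{Ph14} supplies $z_3\in A_+\setminus\{0\}$ with $z_3\otimes 1_m\precsim z_A$, and one runs the controlled property of $\alpha$ with $z_3$ rather than $z_A$, obtaining $e_j\precsim z_3$. Then
\[
f_j=e_j\otimes(s-\xi)_+\ \precsim\ z_3\otimes(1_m\otimes z_B)\ \precsim\ z_A\otimes z_B\ \precsim\ z,
\]
and all the other conditions survive because $(s-\xi)_+$ is close to~$s$.
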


\begin{proof}
First suppose that 
$\alpha$  has the weak tracial Rokhlin property. We proceed
as in the proof of \cite[Theorem~4.5]{FG17} except that
we use Lemma~\ref{L_8612_AppInv}
to choose a suitable positive element $s\in B_+$ in item~(12) of the
proof of  \cite[Theorem~4.5]{FG17}.
We put $f_j=r_j \otimes s$ for $0\leq j\leq n$. The rest of that
proof works and  implies that 
$f_0,f_1,\ldots,f_n$ satisfy Definition~\ref{defwtrpaut}.
Thus
$\alpha\otimes \beta$ 
has the weak tracial Rokhlin property.

Now, suppose that $\alpha$  has the
controlled weak tracial Rokhlin property.
To verify Definition~\ref{defwtrpaut_cont} for
 $\alpha\otimes \beta$,
let $z_0\in A\otimes_{\min} B$ be positive and nonzero.
By Kirchberg's Slice Lemma (\cite[Lemma~4.1.9]{Ro02}),
there are nonzero positive elements
$z_1\in A$ and $z_2\in B$ such that 
$z_1 \otimes z_2 \precsim z_0$.
We proceed
as  the previous paragraph to obtain $s\in B_+$
satisfying item~(12) of the
proof of  \cite[Theorem~4.5]{FG17}.
Since $\lim_{\xi\to 0} (s-\xi)_+=s$, there is $\xi>0$ such that
 item~(12) of the
proof of  \cite[Theorem~4.5]{FG17} holds  for $(s-\xi)_+$
in place of $s$. By \cite[Proposition~1.14]{Ph14}, there is 
$m\in\mathbb{N}$ such that $(s-\xi)_+ \precsim 1_m \otimes z_2$.
By Proposition~\ref{outer}, $A$ is non-Type~I and
hence \cite[Lemma~2.4]{Ph14} provides a nonzero
positive element $z_3\in A$ such that $z_3 \otimes 1_m \precsim z_1$.
Choose $n_0$ as in Definition~\ref{defwtrpaut_cont} for the action
$\alpha$ with $z_3$ in place of $z$, getting $r_0,r_1,\ldots,r_n$  
for $n\geq n_0$.
We only need to
deal with Part~\eqref{defwtrpautc_it5} of Definition~\ref{defwtrpaut_cont}.
Put $f_j=r_j \otimes (s-\xi)_+$ for $0\leq j\leq n$.
Then 
\[
f_j \precsim z_3 \otimes (s-\xi)_+ \precsim z_3 \otimes
(1_m \otimes z_2)\precsim z_1 \otimes z_2 \precsim z_0.
\]
Therefore, $\alpha\otimes \beta$ 
has the controlled weak tracial Rokhlin property.
\end{proof}

\begin{example}\label{Ex_auto}
 Let $A$ be a simple $\mathcal{Z}$-stable C*-algebra and
 let $\beta$ be an
arbitrary automorphism of $A$. 
Let $\gamma$ be the bilateral tensor shift on
$\mathcal{Z}^{\otimes \infty}\cong \mathcal{Z}$ 
(see \cite{Sato10}).
 Consider the automorphism $\alpha= \beta\otimes \gamma$ on $A\cong A\otimes \mathcal{Z}$.
 Then $\alpha$ has the controlled weak tracial Rokhlin property.
  To see this, first,  by \cite[Example~6.9]{HO13} and 
 Proposition~\ref{P_8615_WeakZToGenZ}\ref{P_8615_WeakZToGenZ_ToWk}, 
 $\gamma$ has  the  weak tracial Rokhlin property.
 Since $\mathcal{Z}$ has a unique tracial state,
 Corollary~\ref{cor_cwtrp} implies that 
 $\gamma$ has  the controlled weak tracial Rokhlin property.
 Now, Proposition~\ref{Prop_tensor} implies that
 $\alpha$  has  the controlled weak tracial Rokhlin property.
 Hence the crossed product $C^{*} ({\mathbb{Z}}, A, \alpha)$
 is simple and tracially $\mathcal{Z}$-absorbing
 (by Theorem~\ref{thm_int}).
 If moreover, $A$ is separable and nuclear then  the crossed product
 is nuclear and $\mathcal{Z}$-stable
 (by \cite[Theorem~A]{CLS2021}).
\end{example}

\section*{Acknowledgment}
The second author would like to thank Mikael R{\o}rdam
for valuable discussions about the dichotomy used in
the proof of Corollary~\ref{corxC}.


\end{document}